\renewcommand\ss{\scriptstyle}
\newcommand\sss{\scriptscriptstyle}
\newcommand\stc{\tikz[scale=0.3]{\draw (0,0) -- (1,1) -- (0,1) -- cycle;}}
\newcommand\FPL{\mathrm{FPL}}
\newcommand\sh{\sigma}
\newcommand\subsarg[2]{{[#2]\choose #1}}
\newcommand\subs{\subsarg{n}{N}}
\newcommand\LPN{LP(N)}
\newcommand\Ker{\mathop\mathrm{Ker}\nolimits}
\renewcommand\Im{\mathop\mathrm{Im}\nolimits}
\newcommand\defn[1]{{\bf #1}}
\theoremstyle{plain}
\newtheorem{thm}{Theorem}
\newtheorem*{thm*}{Theorem}
\newtheorem{conj}{Conjecture}
\newtheorem{prop}{Proposition}
\newtheorem{lem}{Lemma}
\newtheorem{cor}{Corollary}
\newtheorem{rmk}{Remark}
\newcommand{\gettikzxy}[3]{
  \tikz@scan@one@point\pgfutil@firstofone#1\relax
\pgfmathsetmacro{#2}{\the\pgf@x/\linkpatternunit}
\pgfmathsetmacro{#3}{\the\pgf@y/\linkpatternunit}
}
\tikzset{label anchor/.code={%
    \let\tikz@auto@anchor=\pgfutil@empty
    \def\tikz@anchor{#1}
  },
  label anchor/.default=center
}
\tikzset{arrow/.style={postaction={decorate,thick,decoration={markings,mark = at position #1 with {\arrow{>}}}}},arrow/.default=0.5}
\tikzset{invarrow/.style={postaction={decorate,thick,decoration={markings,mark = at position #1 with {\arrow{<}}}}},invarrow/.default=0.5}
\newdimen\linkpatternunit%
\newif\iflinkpatterninverted
\newif\iflinkpatterntikzstarted
\newif\iflinkpatternboxed
\newif\iflinkpatternaxis
\newif\iflinkpatternstraightlines
\newif\iflinkpatternnumbered
\newif\iflinkpatternalias
\newif\iflinkpatternnode
\newif\iflinkpatterncentered
\def\linkpatternlooseness{0.2}
\def\linkpatternsquareness{0.35}
\def\linkpatternvertexcolor{red}%
\def\linkpatternedgecolor{blue}%
\def\linkpatternboxcolor{none}%
\def\linkpatternheight{0}
\def\linkpatternwidth{0}
\def\linkpatternshape{default}
\def\linkpatternnumbering{default}
\def\linkpatternpos{(0,0)}
\def\linkpatternextraspace{0}
\def\firstchar#1#2\empty{#1}%
\def\linkpatterndo#1#2{
\edef\param{\csname linkpattern#2\endcsname}
\edef\firstcharparam{\expandafter\firstchar\param\empty}
\expandafter\ifcat\firstcharparam a
\expandafter\ifx\csname linkpattern#1\param\endcsname\relax
\csname linkpattern#1unknown\endcsname
\else
\csname linkpattern#1\csname linkpattern#2\endcsname\endcsname
\fi
\else
\csname linkpattern#1unknown\endcsname
\fi
}%
\def\linkpatterncoordtangle{\ifnum\x>\lphalfsize\pgfmathparse{\lpsize+1-\x}\xdef\lpcoordx{\pgfmathresult}\xdef\lpcoordy{\lpheight}\xdef\lpangle{270}\else\xdef\lpcoordx{\x}\xdef\lpcoordy{-\lpheight}\xdef\lpangle{90}\fi}
\def\linkpatterncoordpipedream{\ifnum\x>\lphalfsize\pgfmathparse{\lpsize+1-\x-0.5}\xdef\lpcoordx{\pgfmathresult}\xdef\lpcoordy{0}\xdef\lpangle{270}\else\pgfmathparse{0.5-\x}\xdef\lpcoordy{\pgfmathresult}\xdef\lpcoordx{0}\xdef\lpangle{0}\fi}
\def\linkpatterncoordrectangle{
\ifnum\x>\lptqsize
\pgfmathparse{\lpsize+1-\x-0.5}\xdef\lpcoordx{\pgfmathresult}\xdef\lpcoordy{0}\xdef\lpangle{270}
\else\ifnum\x>\lphalfsize
\pgfmathparse{\x-\lptqsize-0.5}\xdef\lpcoordy{\pgfmathresult}\xdef\lpcoordx{\linkpatternwidth}\xdef\lpangle{180}
\else\ifnum\x>\linkpatternheight
\pgfmathparse{\x-\linkpatternheight-0.5}\xdef\lpcoordx{\pgfmathresult}\xdef\lpcoordy{-\linkpatternheight}\xdef\lpangle{90}
\else
\pgfmathparse{0.5-\x}\xdef\lpcoordy{\pgfmathresult}\xdef\lpcoordx{0}\xdef\lpangle{0}
\fi\fi\fi
}%
\def\linkpatternsetsizeunknown{
\global\lpsize=\linkpatternsize
\if\linkpatternheight0
\xdef\maxsep{0}
\foreach \x/\xx in \mylist%
{%
\edef\tempx{\withoutprime{\x}}
\edef\tempxx{\withoutprime{\xx}}
\pgfmathparse{max(\maxsep,abs(\tempx-\tempxx))}
\xdef\maxsep{\pgfmathresult}
}%
\pgfmathparse{0.25+0.8*\linkpatternsquareness*\maxsep}
\xdef\lpheight{\pgfmathresult}
\else
\xdef\lpheight{\linkpatternheight}
\fi
}
\def\linkpatternrightmostunknown{
\global\lpsize=0
\global\tempsize=0
\foreach\x/\labx in \linkpatternnumbering
{
\edef\tempx{\withoutprime{\x}}
\ifnum\lpsize<\tempx\global\lpsize=\tempx\fi
\global\advance\tempsize by 1
}
\ifnum\tempsize>\lpsize\global\lpsize=\tempsize\fi
}%
\def\linkpatternrightmostdefault{
\global\lpsize=0
\global\tempsize=0
\foreach \x/\y in \mylist
{
\edef\tempx{\withoutprime{\x}}
\ifnum\lpsize<\tempx\global\lpsize=\tempx\fi
\ifx\x\y
\global\advance\tempsize by 1
\else
\edef\tempy{\withoutprime{\y}}
\ifnum\lpsize<\tempy\global\lpsize=\tempy\fi%
\global\advance\tempsize by 2
\fi
}
\ifnum\tempsize>\lpsize\global\lpsize=\tempsize\fi
}%
\def\linkpatternrightmosttangle{
\global\lpsize=0
\global\tempsize=0
\foreach \x/\y in \mylist
{
\edef\tempx{\withoutprime{\x}}
\ifnum\lpsize<\tempx\global\lpsize=\tempx\fi
\ifx\x\y
\global\advance\tempsize by 1
\else
\edef\tempy{\withoutprime{\y}}
\ifnum\lpsize<\tempy\global\lpsize=\tempy\fi%
\global\advance\tempsize by 2
\fi
}
\global\advance\lpsize by\lpsize
\ifnum\tempsize>\lpsize\global\lpsize=\tempsize\fi
}%
\newcommand\linkpattern[2][]{
{
\pgfkeys{/linkpattern/.cd,#1}
\edef\mylist{#2}
\def\primetest##1'{}%
\def\hasaprime##1{\expandafter\primetest##1''}
\def\internalwithoutprime##1'{##1}%
\def\withoutprime##1{\if\hasaprime##1 %
\expandafter\internalwithoutprime##1\else ##1\fi}%
\iflinkpatternnumbered%
\iflinkpatterninverted
\tikzset{/linkpattern/lbl/.style n args={3}{label={[/linkpattern/labeloptionslist=-##1,##3] ##1:##2}}}%
\else%
\tikzset{/linkpattern/lbl/.style n args={3}{label={[/linkpattern/labeloptionslist=##1,##3] ##1:##2}}}%
\fi%
\else%
\tikzset{/linkpattern/lbl/.style={}}%
\fi%
\tikzifinpicture{\linkpatterntikzstartedtrue%
\begin{scope}[shift=\linkpatternpos,/linkpattern/every linkpattern]
}{%
\linkpatterntikzstartedfalse%
\iflinkpatterncentered
\begin{tikzpicture}[baseline=(current  bounding  box.center),/linkpattern/every linkpattern]%
\else
\begin{tikzpicture}[baseline=0,/linkpattern/every linkpattern]%
\fi
}%
\begin{scope}[local bounding box=link pattern box]
\iflinkpatterninverted%
\begin{scope}[yscale=-1]%
\fi%
\linkpatterndo{setsize}{shape}
\ifnum\lpsize=0
\linkpatterndo{rightmost}{numbering}
\fi
\pgfmathtruncatemacro{\lphalfsize}{\lpsize/2}
\linkpatterndo{numbering}{numbering}
\iflinkpatternboxed
\linkpatterndo{drawbox}{shape}
\else
\iflinkpatternaxis
\linkpatterndo{drawaxis}{shape}
\fi
\fi
\foreach\xx/\xlab/\opt in \lpnumbering
{
\ifx\xlab\opt\def\opt{}\fi
\if\hasaprime\xx %
\pgfmathtruncatemacro{\xx}{\lpsize+1-\withoutprime{\xx}}
\fi
%
%
\ifnum\linkpatternfused>1
\pgfmathsetmacro{\x}{0.4*(0.5+\linkpatternfused*(0.5+floor((\xx-1)/\linkpatternfused)))+0.6*\xx}
\else
\def\x{\xx}
\fi
\linkpatterndo{coord}{shape}
\iflinkpatternalias\def\xlabb{\xlab}\else\def\xlabb{\xx}\fi
\path (\lpcoordx,\lpcoordy) coordinate[/linkpattern/vertex,/linkpattern/lbl={\lpangle+180}{\xlab}{\opt},alias=v\xlabb] (v\xx) ++(\lpangle:\linkpatternunit) coordinate[alias=vv\xlabb] (vv\xx); 
}
\foreach \a/\b/\c in \mylist
{
\if\hasaprime\a %
\pgfmathtruncatemacro{\a}{\lpsize+1-\withoutprime{\a}}
\fi
\ifx\b\c\def\c{}\fi
\draw[/linkpattern/edge]
\ifx\a\b
(v\a)
\c
--
++(0,\lpheight);
\else
\pgfextra{
\if\hasaprime\b %
\pgfmathtruncatemacro{\b}{\lpsize+1-\withoutprime{\b}}
\fi
\gettikzxy{(v\a)}{\ax}{\ay}
\gettikzxy{(v\b)}{\bx}{\by}
\gettikzxy{(vv\a)}{\axx}{\ayy}
\gettikzxy{(vv\b)}{\bxx}{\byy}
\pgfmathsetmacro{\dist}{sqrt((\ax-\bx)*(\ax-\bx)+(\ay-\by)*(\ay-\by))}
\pgfmathsetmacro{\abx}{(\axx-\ax)*\dist*\linkpatternsquareness+(\bx-\ax)*\linkpatternlooseness)}
\pgfmathsetmacro{\aby}{(\ayy-\ay)*\dist*\linkpatternsquareness+(\by-\ay)*\linkpatternlooseness)}
\pgfmathsetmacro{\bax}{(\bxx-\bx)*\dist*\linkpatternsquareness+(\ax-\bx)*\linkpatternlooseness)}
\pgfmathsetmacro{\bay}{(\byy-\by)*\dist*\linkpatternsquareness+(\ay-\by)*\linkpatternlooseness)}
}
(v\a)
\c
\iflinkpatternstraightlines
\pgfextra{
\pgfmathsetmacro{\t}{((\ax-\bx)*\bay-(\ay-\by)*\bax)/(\aby*\bax-\abx*\bay)}
\pgfmathsetmacro{\abx}{\t*\abx}
\pgfmathsetmacro{\aby}{\t*\aby}
}
[rounded corners=0.2\linkpatternunit] -- ++(\abx,\aby) -- (v\b);
\else
.. controls ++(\abx,\aby) and ++(\bax,\bay) .. 
\fi
(v\b);
\fi
}
\end{scope}
\iflinkpatternnode
\node[fit=(link pattern box),/linkpattern/nodeoptionslist] {};
\fi
\iflinkpatterninverted
\end{scope}
\fi
\iflinkpatterntikzstarted
\end{scope}
\else%
\end{tikzpicture}%
\fi%
}}%
\newcommand\tanglelinkpattern[3][]{%
{
\pgfkeys{/linkpattern/.cd,#1}
\iflinkpatterninverted
\begin{tikzpicture}[/linkpattern/every linkpattern,baseline=\linkpatternunit]%
\else
\begin{tikzpicture}[/linkpattern/every linkpattern,baseline=-\linkpatternunit]%
\fi
\linkpattern[#1,tikzstarted,numbered=false]{#3}
\pgfmathtruncatemacro{\lptempsize}{2*\linkpatternsize}
\iflinkpatterninverted
\begin{scope}[yshift=0.5*\linkpatternunit]
\else
\begin{scope}[yshift=-0.5*\linkpatternunit]
\fi
\linkpattern[tangle,#1,tikzstarted,size=\lptempsize,
numbering=halftangle,
height=0.5]{#2}
\end{scope}
\end{tikzpicture}%
}}
\newcommand\diag[4][]{%
\pgfkeys{/linkpattern/.cd,#1}
\iflinkpatterntikzstarted\else%
\begin{tikzpicture}[scale=0.5]
\fi%
\iflinkpatterninverted%
\begin{scope}[yscale=-1]%
\fi%
\draw (0,0) grid (#2,#3);
\edef\mylist{#4}
\foreach\y/\x/\z in \mylist
{
\ifx\x\z
\draw[decorate,decoration={zigzag,
amplitude=1pt,segment length=5pt}]
(\x-0.5,#3) -- (\x-0.5,\y-0.5) node[circle,fill=black,inner sep=2pt] {} -- (#2,\y-0.5);
\else
\node at (\x-0.5,\y-0.5) {$\z$};
\fi
}
\iflinkpatterninverted
\end{scope}
\fi
\iflinkpatterntikzstarted\else%
\end{tikzpicture}%
\fi%
}
\tikzset{circle split part fill/.style  args={#1,#2}{%
 alias=tmp@name,
  postaction={%
    insert path={
     \pgfextra{%
     \pgfpointdiff{\pgfpointanchor{\pgf@node@name}{center}}%
                  {\pgfpointanchor{\pgf@node@name}{east}}%
     \pgfmathsetmacro\insiderad{\pgf@x}
      \fill[#1] (\pgf@node@name.base) ([xshift=-\pgflinewidth]\pgf@node@name.east) arc
                          (0:180:\insiderad-\pgflinewidth)--cycle;
      \fill[#2] (\pgf@node@name.base) ([xshift=\pgflinewidth]\pgf@node@name.west)  arc
                           (180:360:\insiderad-\pgflinewidth)--cycle;                    }}}}}  
\tikzset{bdot/.style={circle,circle split,draw,circle split part fill={black,white},thin,inner sep=1pt}}%
\tikzset{wdot/.style={circle,circle split,draw,circle split part fill={white,black},thin,inner sep=1pt}}%
\newcommand\circlelinkpattern[2][]{
{
\pgfkeys{/linkpattern/.cd,#1}
\iflinkpatterntikzstarted\else%
\begin{tikzpicture}[/linkpattern/every linkpattern]%
\fi%
\iflinkpatterninverted%
\begin{scope}[yscale=-1]%
\fi%
\global\lpsize=\linkpatternsize
\edef\mylist{#2}
\foreach \x/\y in \mylist
{
\ifnum\x>\lpsize\global\lpsize=\x\fi
\ifnum\y>\lpsize\global\lpsize=\y\fi
}
%
\iflinkpatternaxis
\draw (0,0) circle (1);
\fi
\foreach\x in {1,...,\lpsize}
{
\pgfmathparse{(0.3*floor((\x-1)/\linkpatternfused)+0.7*((\x-0.5)/\linkpatternfused-0.5))*\linkpatternfused*360/\lpsize}
\coordinate[/linkpattern/vertex] (v\x) at (\pgfmathresult:1);
}
\foreach \x/\y/\z in \mylist
{
\ifx\y\z%
\draw[/linkpattern/edge] (v\x) .. controls ($0.5*(v\x)$) and  ($0.5*(v\y)$) .. (v\y);
\else
\draw[/linkpattern/edge] \z (v\x) .. controls ($0.5*(v\x)$) and  ($0.5*(v\y)$) .. (v\y);
\fi
}
\iflinkpatternnumbered%
\pgfmathparse{\lpsize/\linkpatternfused}
\global\lpsize=\pgfmathresult
\def\linkpatternnumbering{1,...,\lpsize}
\newdimen\angle
\foreach\x/\xx/\opt in \linkpatternnumbering
{
  \pgfmathsetmacro{\angle}{360/\lpsize*(\x-1)}
\ifx\xx\opt%
  \node[outer sep=1pt,anchor=180+\angle] at (\angle:1) {$\scriptstyle\xx$}; 
\else
  \node[outer sep=1pt,anchor=180+\angle,\opt] at (\angle:1) {$\scriptstyle\xx$}; 
\fi
}
\fi%
\iflinkpatterninverted%
\end{scope}
\fi%
\iflinkpatterntikzstarted\else%
\end{tikzpicture}%
\fi%
}}%
\newdimen{\loopcellsize}\setlength{\loopcellsize}{0.75cm}
\tikzset{bgplaq/.style={draw=black,fill=\linkpatternboxcolor}}
\def\plaqwest{}
\def\plaqeast{}
\def\plaqnorth{}
\def\plaqsouth{}
\def\plaqname{plaq}
\newcommand\plaq[2][]{
\node[bgplaq,rectangle,draw,use as bounding box,minimum size=\loopcellsize,transform shape] (\plaqname) {};
\pgfkeys{/linkpattern/.cd,#1}
\ifx#2\empty\else
\begin{scope}[x=\loopcellsize,y=\loopcellsize]
\csname plaq#2\endcsname
\end{scope}\fi
}
\tikzset{loop/.code={\def\plaqname{loop-\the\pgfmatrixcurrentrow-\the\pgfmatrixcurrentcolumn}},loop/.append style={matrix,row sep={\loopcellsize,between origins},column sep={\loopcellsize,between origins}}}
\newdimen{\cellsize}
\newcommand\bigboxes{\setlength{\cellsize}{18pt}\def\boxformat{}}
\newcommand\medboxes{\setlength{\cellsize}{14.22pt}\def\boxformat{}}
\newcommand\smallboxes{\setlength{\cellsize}{10pt}\def\boxformat{\scriptstyle}}
\tikzset{tableaubox/.style={draw=black,thin,sharp corners,solid,minimum size=\cellsize,inner sep=0pt}}
\tikzset{tableau/.style={matrix,name=tab,matrix anchor=tab-1-1.south west,inner sep=1pt,matrix of math nodes,cells={anchor=center,draw=black,thin,solid,arrows=-},nodes={tableaubox,execute at begin node=\boxformat},nodes in empty cells,row sep={\cellsize,between origins},column sep={\cellsize,between origins}}}
\newcommand\missingcell{|[draw=none]|}
\newcommand\cellextra[1]{#1\expandafter\tikz@lib@matrix@start@cell}
\newcommand\hcell{\cellextra{\draw (-0.5*\cellsize,0.5*\cellsize) --++(\cellsize,0);}\missingcell}
\newcommand\vcell{\cellextra{\draw (-0.5*\cellsize,-0.5*\cellsize) --++(0,\cellsize);}\missingcell}
\newcommand\vhcell{\cellextra{\draw (-0.5*\cellsize,-0.5*\cellsize) --++(0,\cellsize) --++(\cellsize,0);}\missingcell}
\def\activate#1{\begingroup
  \lccode`\~=`#1%
  \lowercase{\endgroup \let~#1}%
  \catcode`#1=13\relax}
\newcommand\tableau[1]{\tikz[baseline=0]
\node[tableau]{#1};}
\def\linkpatternboxcolor{pink!20!white}
\tikzset{bgplaq/.style={draw=black,dotted,fill=\linkpatternboxcolor}}
\tikzset{mypath/.style={ultra thick,red!50!black}}
\tikzset{emptypath/.style={double,thin,gray}}
\def\lozXY(#1,#2){
\begin{scope}[x={(0.866cm,-0.5cm)},y={(0.866cm,0.5cm)}]
\draw[fill=cyan!30!white] (#1,#2) -- ++(1,0) -- ++(0,1) -- ++(-1,0) -- cycle;
\end{scope}
}
\def\lozX(#1,#2){
\begin{scope}[x={(0cm,-1cm)},y={(0.866cm,0.5cm)},shift={(\a*0.866cm+0.866cm,\a*0.5cm+0.5cm)}]
\draw[fill=pink] (#1,#2) -- ++(1,0) -- ++(0,1) -- ++(-1,0) -- cycle;
\end{scope}
}
\def\lozY(#1,#2){
\begin{scope}[x={(0.866cm,-0.5cm)},y={(0cm,1cm)},shift={(\a*0.866cm+0.866cm,-\a*0.5cm-0.5cm)}]
\draw[fill=orange!50!white] (#1,#2) -- ++(1,0) -- ++(0,1) -- ++(-1,0) -- cycle;
\end{scope}
}
\def\nilpX(#1,#2){
\begin{scope}[x={(0cm,-1cm)},y={(0.5cm,0.5cm)},shift={(\a*0.5cm+0.5cm,\a*0.5cm+0.5cm)}]
\draw[mypath] (#1,#2) ++(0.5,0) -- ++(0,1);
\end{scope}
}
\def\nilpY(#1,#2){
\begin{scope}[x={(0.5cm,-0.5cm)},y={(0cm,1cm)},shift={(\a*0.5cm+0.5cm,-\a*0.5cm-0.5cm)}]
\draw[mypath] (#1,#2) ++(0,0.5) -- ++(1,0);
\end{scope}
}
\def\dimXY(#1,#2){
\begin{scope}[x={(0.866cm,-0.5cm)},y={(0.866cm,0.5cm)}]
\draw[mypath] (#1,#2) ++(0.333,0.333) -- ++(0.333,0.333);
\draw[emptypath] (#1,#2) ++(0.333,0.333) -- ++(-0.333,0.166);
\draw[emptypath] (#1,#2) ++(0.333,0.333) -- ++(0.166,-0.333);
\draw[emptypath] (#1,#2) ++(0.666,0.666) -- ++(0.333,-0.166);
\draw[emptypath] (#1,#2) ++(0.666,0.666) -- ++(-0.166,0.333);
\end{scope}
}
\def\dimX(#1,#2){
\begin{scope}[x={(0cm,-1cm)},y={(0.866cm,0.5cm)},shift={(\a*0.866cm+0.866cm,\a*0.5cm+0.5cm)}]
\draw[mypath] (#1,#2) ++(0.333,0.666) -- ++(0.333,-0.333);
\draw[emptypath] (#1,#2) ++(0.333,0.666) -- ++(0.166,0.333);
\draw[emptypath] (#1,#2) ++(0.333,0.666) -- ++(-0.333,-0.166);
\draw[emptypath] (#1,#2) ++(0.666,0.333) -- ++(0.333,0.166);
\draw[emptypath] (#1,#2) ++(0.666,0.333) -- ++(-0.166,-0.333);
\end{scope}
}
\def\dimY(#1,#2){
\begin{scope}[x={(0.866cm,-0.5cm)},y={(0cm,1cm)},shift={(\a*0.866cm+0.866cm,-\a*0.5cm-0.5cm)}]
\draw[mypath] (#1,#2) ++(0.333,0.666) -- ++(0.333,-0.333);
\draw[emptypath] (#1,#2) ++(0.333,0.666) -- ++(0.166,0.333);
\draw[emptypath] (#1,#2) ++(0.333,0.666) -- ++(-0.333,-0.166);
\draw[emptypath] (#1,#2) ++(0.666,0.333) -- ++(0.333,0.166);
\draw[emptypath] (#1,#2) ++(0.666,0.333) -- ++(-0.166,-0.333);
\end{scope}
}
\newcommand{\loz}[3]{%
\begin{scope}[scale=0.75,xshift=-\a*0.866cm-\b*0.433cm-\c*0.433cm-1.732cm,yshift=\b*0.25cm-\c*0.25cm]
\foreach\co in #1 { \expandafter\lozXY\co }
\foreach\co in #2 { \expandafter\lozX\co }
\foreach\co in #3 { \expandafter\lozY\co }
\end{scope}
}
\newcommand{\nilp}[2]{%
\begin{scope}[scale=0.9,xshift=-\a*0.5cm-\b*0.25cm-\c*0.25cm-1cm,yshift=\b*0.25cm-\c*0.25cm]
\begin{scope}
\clip (\a*0.5+1,\a*0.5) -- ++(\c*0.5,\c*0.5) -- ++(\b*0.5,-\b*0.5) -- ++(0,-\a) -- ++(-\c*0.5,-\c*0.5) -- ++(-\b*0.5,\b*0.5) -- cycle;
\draw[emptypath,xshift=1.5cm,rotate=-45,scale=0.707] (0,0) grid (\a+\b-1,\a+\c-1);
\end{scope}
\foreach\co in #1 { \expandafter\nilpX\co }
\foreach\co in #2 { \expandafter\nilpY\co }
\begin{scope}[shift={(\a*0.5cm+1cm,\a*0.5cm+0.5cm)}]
\foreach\i in {1,...,\a}
{
\node[/linkpattern/vertex] (a) at (0,-\i) {};
\node[/linkpattern/vertex] (b) at (\b*0.5+\c*0.5,-\i-\b*0.5+\c*0.5) {};
}
\end{scope}
\end{scope}
}
\newcommand{\dimer}[3]{%
\begin{scope}[scale=0.75,xshift=-\a*0.866cm-\b*0.433cm-\c*0.433cm-1.732cm,yshift=\b*0.25cm-\c*0.25cm]
\foreach\co in #1 { \expandafter\dimXY\co }
\foreach\co in #2 { \expandafter\dimX\co }
\foreach\co in #3 { \expandafter\dimY\co }
\end{scope}
}
\newcommand\tensor\otimes
\newcommand\Tensor\bigotimes
\newcommand\calO{{\mathcal O}}
\newcommand\onto{\twoheadrightarrow}
\newcommand\wt[1]{\widetilde{#1}}
\newcommand\into\hookrightarrow
\newcommand\infrom\hookleftarrow
\newcommand\from\longleftarrow
\newcommand\junk[1]{}
\newcommand\CC{{\mathbb C}}
\newcommand\NN{{\mathbb N}}
\newcommand\Spec{\mathop{\tt Spec}}
\newcommand\init{\mathtt{in}}
\newcommand\ZZ{{\mathbb Z}}
\newcommand\iso{\cong}
\title[Grassmann-Grassmann conormal varieties, integrability, and plane partitions\ \ ]
{Grassmann-Grassmann conormal varieties,\\ integrability, and plane partitions}
\author{Allen Knutson}
\address{Allen Knutson, Cornell, Ithaca, NY 14853, USA.}
\email{allenk@math.cornell.edu}
\author{Paul Zinn-Justin}
\address{Paul Zinn-Justin, School of Mathematics and Statistics, The University of Melbourne, 
Parkville, Victoria 3010, Australia.}
\email{pzinn@unimelb.edu.au}
\thanks{PZJ was supported by 
ERC grant ``LIC'' 278124
and ARC grant DP140102201.
Computerized checks of the results of this paper have been performed
with the help of Macaulay 2 \cite{M2}.}
\date{\today}
\long\def\rem#1{{\bf [#1]}}
\begin{document}

\begin{abstract}
We give a conjectural formula for sheaves supported on 
(irreducible) conormal varieties inside
the cotangent bundle of the Grassmannian, such that their equivariant $K$-class
is given by the partition function of an integrable loop model, and furthermore
their $K$-theoretic pushforward to a point is a solution of the level $1$ quantum
Knizhnik--Zamolodchikov equation. We prove these results in the case that
the Lagrangian is smooth (hence is the conormal bundle
to a subGrassmannian). To compute the pushforward to a point,
or equivalently to the affinization, we simultaneously
degenerate the Lagrangian and sheaf (over the affinization); 
the sheaf degenerates to a direct sum of cyclic modules over the
geometric components, which are in bijection with plane partitions, giving a
geometric interpretation 
to the Razumov--Stroganov correspondence satisfied by the loop model.
\end{abstract}

\maketitle

\tableofcontents

\section{Introduction}
In \cite{MO-qg} solutions to the rational Yang--Baxter equation (YBE) were
constructed using cohomology classes living on ``symplectic resolutions'',
in particular on the cotangent bundles of Grassmannians
(the main symplectic resolutions considered here).
Each class has a geometric origin, as the (usually reducible) singular support 
of a certain $\mathcal D$-module on the Grassmannian.  
In this paper we begin the study of a geometric origin for the
corresponding trigonometric YBE, constructing sheaves on these
cotangent bundles whose equivariant $K$-classes (once pushed to a point) 
we conjecture to satisfy the (trigonometric) quantum
Knizhnik--Zamolodchikov equation.

For each Schubert variety $X^r \subseteq Gr(n,N)$, we define a sheaf $\sh_r$
supported on its conormal variety $CX^r \subseteq T^* Gr(n,N)$
(definitions appearing in \S\ref{sec:gen}).
We give a conjectural formula for its equivariant $K$-theory class
as a rectangular-domain partition function.
The sheaf cohomology groups of $\sh_r$ 
are modules over the affinization $\mu(CX^r)$;
conjecturally, all cohomology vanishes unless the affinization map is 
birational, not dropping dimension. 
As the paper's title indicates, in this paper we focus attention on 
(and prove the conjectures in)
the case that $X^r$ is a subGrassmannian (the only time $X^r$ is smooth);
the affinization $\mu(CX^r)$ of $CX^r$ is then an $A_3$ quiver cycle. 

The Stanley--Reisner ring of a simplicial complex has a basis given 
by monomials, and a ``shelling'' of the simplicial complex gives a partitioning
of the monomials into orthants, allowing one to count those monomials
without inclusion-exclusion.
We do something closely analogous with a degeneration of 
$(\mu(CX^r),\mu_* \sh_r)$, giving a partitioning into cones of
a $\CC$-basis of the module $\mu_* \sh_r = H^0(CX^r;\ \sh_r)$.
Geometrically, this degenerates the base $\mu(CX^r)$ of the sheaf 
$\mu_* \sh_r$ to a highly reducible
scheme with very simple components, each bearing one summand of
the degenerate sheaf (though the shelling statement is stronger).

One key difference between the results of \cite{MO-qg,RTV-K} and ours 
is that we work directly with sheaves on $T^*Gr(n,N)$, 
not just $K$-classes thereof,
which is a sort of positivity statement; in the subGrassmannian case
we get a similar positivity
on their pushforwards from the vanishing of their higher cohomology 
(proposition \ref{prop:noHi}).
Another difference is that we work with (sheaves supported on)  
individual conormal varieties, whereas \cite{MO-qg,RTV-K} 
work with the stable basis (supported on unions).
Their basis is positive upper triangular w.r.t.\ ours,
the change of basis being given by maximal parabolic Kazhdan--Lusztig 
polynomials (the same change of basis which relates the corresponding integrable
models: the six-vertex model for the stable basis
and the Temperley--Lieb loop model for ours).

\junk{announce main result of future work as conjecture:
that there are sheaves [positivity] in $T^\ast Gr$
which are supported on conormals of Schubert [which we can build
explicitly geometrically] (1) whose $K$-class is given by a rectangular
finite domain partition function [variation of our unfinished
finite-domain paper: $K$-theory without crossings! should incidentally be also obtained by degeneration in the pair-of-$n\times N$-matrices 
picture] and (2) whose $K$-theoretic pushforward to a point [presumably, just character of global sections since
no higher sheaf cohomology] is given by [Laurent] polynomial
level $1$ solution of $q$KZ equation in the link pattern basis [really, only
in $Gr(n,N)$ with $N=2n$ -- should check one day what happens when $n\ne N/2$]}

\bigskip

\subsection{Various combinatorial gadgets}\label{ssec:combo}
We shall use interchangeably three sets in bijection:
\begin{itemize}
\item {\em Subsets}\/ of $[N] := \{1,\ldots,N\}$ of cardinality $n$;
\item {\em Young diagrams}\/ fitting inside an $(N-n)\times n$ rectangle;
\item {\em Link patterns}\/ of size $N$ with at most $\min(n,N-n)$ chords, that is, pairings of $N$ vertices on a line 
(some possibly being left unpaired), in such a way that there are at most $\min(n,N-n)$ pairings, drawn as chords,
and that the chords, as well as half-infinite lines coming out of unpaired vertices, may be drawn in a half-plane without any crossings.
\end{itemize}
We call the set of any of these objects $\subs$, and now describe the bijections.

Given a Young diagram, we associate to it a subset as follows: if we number from $1$ to $N$ 
the boundary edges of the Young diagram from bottom left to top right, then the subset $r=\{r_1,\ldots,r_n\}$
consists of all steps to the right. We always order increasingly elements
of the subset, that is we always have $r_i<r_{i+1}$, $i=1,\ldots,n-1$.
We shall denote by $\bar r$
its complement in $\{1,\ldots,N\}$, which therefore consists of all steps up.
\junk{PZJ: careful that these are not the parts of the partition
  associated to $r$; there's a shift. AK: I doubt this warning is necessary}

Given a link pattern with at most $\min(n,N-n)$ chords, then the subset consists of all ``closings'' of chords (vertices paired to another vertex to the left, where the numbering is from left to right), completed to cardinality $n$ by including unpaired vertices starting from the left.

Finally, we also provide the bijection from Young diagrams to link patterns, since it will be useful later: rotate the Young diagram
45 degrees clockwise, then fill the boxes of its {\em complement}\/ with the picture \tikz[scale=0.75,rotate=45,baseline=-3pt]{\plaq{b}};
the connectivity of the lines emerging at the top reproduces the link pattern.

On an example,
\newcommand\dottedcell{\cellextra{\draw[dotted] (-0.5*\cellsize,-0.5*\cellsize) --++(\cellsize,0);
\draw[dotted] (0.5*\cellsize,-0.5*\cellsize) --++(0,\cellsize);}\missingcell}
\begin{multline}\label{eq:bij}
r=\{1,4,6,7,10\}\in \subsarg{5}{11}:
\\
\begin{tikzpicture}[baseline=0]
\node[tableau]{
&&&&\\
&&&&\dottedcell\\
&&&&\dottedcell\\
&&\dottedcell&\dottedcell&\dottedcell\\
&\dottedcell&\dottedcell&\dottedcell&\dottedcell\\
&\dottedcell&\dottedcell&\dottedcell&\dottedcell\\
};
\draw[latex-latex] ([xshift=-0.4cm]tab-1-1.north west) -- node[left] {$\ss N-n$} ([xshift=-0.4cm]tab-6-1.south west);
\draw[latex-latex] ([yshift=0.4cm]tab-1-1.north west) -- node[above] {$\ss n$} ([yshift=0.4cm]tab-1-5.north east);
\end{tikzpicture}
\quad\rightarrow\quad
\setlength{\loopcellsize}{0.5cm}
\begin{tikzpicture}[baseline=0]
\node[rotate=45,inner sep=0pt] { \tikz{ 
\node[loop,outer sep=0pt]
{
\node[rectangle,draw=none,use as bounding box,minimum size=\loopcellsize] (loop-\the\pgfmatrixcurrentrow-\the\pgfmatrixcurrentcolumn) {};&&&&&\\
\plaq{b}&\plaq{b}&&&\\
\plaq{b}&\plaq{b}&\plaq{b}&&&\\
\plaq{b}&\plaq{b}&\plaq{b}&&&\\
\plaq{b}&\plaq{b}&\plaq{b}&\plaq{b}&\plaq{b}&\\
};
\begin{scope}[x=\loopcellsize,y=\loopcellsize]
\draw (loop-1-1.north west) -- node[/linkpattern/vertex] {} ++(0,-1) -- node[/linkpattern/vertex] {} ++(1,0) -- node[/linkpattern/vertex] {} ++(1,0) -- node[/linkpattern/vertex] {} ++(0,-1) -- node[/linkpattern/vertex] {} ++(1,0) -- node[/linkpattern/vertex] {} ++(0,-1) -- node[/linkpattern/vertex] {}  ++(0,-1) -- node[/linkpattern/vertex] {} ++(1,0) -- node[/linkpattern/vertex] {} ++(1,0) -- node[/linkpattern/vertex] {} ++(0,-1) -- node[/linkpattern/vertex] {} ++(1,0);
\end{scope}
}};
\end{tikzpicture}
=\ 
\linkpattern[inverted]{1/1,2/7,3/4,5/6,8/8,9/10,11/11}
\end{multline}

\junk{it'd be nice to indicate
  in the third picture the difference between ``unmatched going West''
  and ``unmatched going East''; without that extra data we can't determine $n$,
  only $N$. P: true but that would require extra explanations and it would just make things more confusing, I believe. 
  A: Well, without, we don't have a bijection. P: we do have a bijection for fixed $n$, not taking the union over $n$}

Also, denote by $|r|$ the number of boxes of $r$;
equivalently, $|r|=n(N-n)-\sum_{i=1}^n (r_i-i)$.

We endow $\subs$ with the following order relation:
$\subseteq$ denotes inclusion of Young diagrams, or equivalently
of the corresponding Schubert varieties. 
An example of the poset structure is shown in Fig.~\ref{fig:poset}.
\begin{figure}
\begin{tikzpicture}
\smallboxes
\begin{scope}[local bounding box=f34,shift={(0,0)}]
\linkpattern[tikzstarted,inverted] {1/4,2/3}
\node[tableau] at (3,0) {\vhcell 3&\hcell 4\\\vcell\\};
\end{scope}
\node[fit=(f34)] (y34) {};
\begin{scope}[local bounding box=f24,shift={(0,2)}]
\linkpattern[tikzstarted,inverted] {1/2,3/4}
\node[tableau] at (3,0) {&\hcell 4\\\vcell 2\\};
\end{scope}
\node[fit=(f24)] (y24) {};
\begin{scope}[local bounding box=f23,shift={(-2.5,4)}]
\linkpattern[tikzstarted,inverted] {1/2,3/3,4/4}
\node[tableau] at (3,0) {&\\\vcell 2&\missingcell 3\\};
\end{scope}
\node[fit=(f23)] (y23) {};
\begin{scope}[local bounding box=f14,shift={(2.5,4)}]
\linkpattern[tikzstarted,inverted] {1/1,2/2,3/4}
\node[tableau] at (3,0) {&\hcell\\\\};
\end{scope}
\node[tableau] at (5.5,4) {&\hcell 4\\\\\missingcell 1\\};
\node[fit=(f14)] (y14) {};
\begin{scope}[local bounding box=f13,shift={(0,6)}]
\linkpattern[tikzstarted,inverted] {2/3,1/1,4/4}
\node[tableau] at (3,0) {&\\&\missingcell 3\\\missingcell 1\\};
\end{scope}
\node[fit=(f13)] (y13) {};
\begin{scope}[local bounding box=f12,shift={(0,8)}]
\linkpattern[tikzstarted,inverted,height=0.5] {1/1,2/2,3/3,4/4}
\node[tableau] at (3,0) {&\\&\\\missingcell 1&\missingcell 2\\};
\end{scope}
\node[fit=(f12)] (y12) {};
\draw[->] (y34) -- (y24);
\draw[->] (y24) -- (y23);
\draw[->] (y24) -- (y14);
\draw[->] (y23) -- (y13);
\draw[->] (y14) -- (y13);
\draw[->] (y13) -- (y12);
\end{tikzpicture}
\caption{The poset structure for $\subsarg{2}{4}$.}\label{fig:poset}
\end{figure}

When this order is reformulated
in terms of subsets, it corresponds to pointwise greater or equal;
we shall therefore denote the opposite order $\le$, hoping this does not
create any confusion:
\[
r\subseteq s
\ \Leftrightarrow\ 
s\le r
\ \Leftrightarrow\ 
s_i\le r_i,\ i=1,\ldots,n
\]

A \defn{Completely Packed Loop configuration} (or CPL, in short) is an assignment of the two possible plaquettes
\tikz[baseline=0]{\plaq{a}} and \tikz[baseline=0]{\plaq{b}} to the faces of a $n\times N$ square grid, e.g., for
$N=4$, $n=2$, one CPL is
\begin{center}
\begin{tikzpicture}
\node[loop] { 
\plaq{a} & \plaq{b} & \plaq{a} & \plaq{a}
\\
\plaq{b} & \plaq{a} & \plaq{a} & \plaq{a}
\\
};
\draw[latex-latex] ([xshift=-0.4cm]loop-1-1.north west) -- node[left] {$\ss n$} ([xshift=-0.4cm]loop-2-1.south west);
\draw[latex-latex] ([yshift=0.4cm]loop-1-1.north west) -- node[above] {$\ss N$} ([yshift=0.4cm]loop-1-4.north east);
\end{tikzpicture}
\end{center}

We say that a CPL has top-connectivity given by $r\in\subs$ iff when following the paths made by the (blue) lines, the connectivity of the external edge midpoints obeys the following rules:
\begin{itemize}
\item Denoting (l,r,b,t) for a midpoint situated on the left, right,
  bottom, or top sides, the allowed connectivities are (b,l), (b,r),
  (b,t), (l,t), (t,t).
\item The connectivity of the $N$ midpoints across the top edge (ignoring
  connectivities outside the top side, i.e.,\ declaring a midpoint
  connected to the left or bottom to be unpaired) 
  reproduces the link pattern $r$.
\end{itemize}
Note that these conditions imply that if $r$ has $k$ chords, then a CPL with 
top-connectivity $r$
has $k$ pairings (t,t), $n-k$ (t,l) and $N-n-k$ (b,t).

\junk{link patterns should be upside down. and yes, if we went back
to the brauer model, that means link patterns should be read
clockwise, not counterclockwise... oops}

A CPL may have paths which close onto themselves; we call them \defn{loops}, and their number is denoted $|\text{loops}|$.

For example, the CPL above has top-connectivity $\linkpattern[inverted]{1/1,2/3,4/4}$, and $|\text{loops}|=1$.

\subsection{The geometric setup}\label{sec:gen}
Given two integers $n$ and $N$ such that $0\le n\le N$, 
consider the Grassmannian $Gr(n,N)=\{ V\leq \mathbb C^N: \dim V=n\}$ 
and its cotangent bundle $T^\ast Gr(n,N)$. $GL(N)$ acts on each of these,
as do its Borel subgroups $B_\pm$ and diagonal matrices $T_0 = B_+\cap B_-$. 
An additional circle $\mathbb C^\times$ acts on $T^\ast Gr(n,N)$ by scaling 
of the cotangent spaces.  Let $T:=T_0 \times \CC^\times$, 
with representation rings 
$K_{T_0} = \ZZ[z_1^\pm,\ldots,z_n^\pm], K_{\CC^\times} = \ZZ[t^\pm]$.

The $T_0$-fixed points in $Gr(n,N)$ 
(or equivalently, $T$-fixed points
in $T^\ast Gr(n,N)$, viewing $Gr(n,N)$ as its zero section)
are coordinate subspaces, thereby labeled by subsets
$r\in\subs$. Their $B_-$-orbits $X^r_o$ are called \defn{Schubert cells},
with conormal bundles denoted
$$ CX^r_\circ := \{(x,\vec v)\ :\ x \in X^r_\circ, \ \vec v \in T^*_x Gr(n,N),
\ \vec v \perp T_x (X^r_\circ) \} \quad \subseteq T^* Gr(n,N). $$
Their closures $X^r:=\overline{X^r_o}$ and $CX^r:=\overline{CX^r_o}$
we call \defn{Schubert varieties}
and \defn{conormal Schubert varieties}, respectively.

We shall consider certain $T$-equivariant coherent sheaves
on $T^\ast Gr(n,N)$, and their classes in the equivariant $K$-theory
ring $K_T(T^\ast Gr(n,N)) \iso K_T(Gr(n,N)) 
\iso K_{T_0}(Gr(n,N)) \tensor \ZZ[t^\pm]$. 
For $r \in {[N]\choose n}$, define the \defn{restriction map}
\begin{align*}
\vert_r:\ 
\ZZ[y_1^\pm,\ldots,y_n^\pm,z_1^\pm,\ldots,z_N^\pm] &\to K_{T_0} \\
f &\mapsto f\vert_r := f(z_{r_1},\ldots,z_{r_n},z_1,\ldots,z_N)
\end{align*}
in which case
$$ K_{T_0}(Gr(n,N)) \quad\iso \quad
\ZZ[y_1^\pm,\ldots,y_n^\pm,z_1^\pm,\ldots,z_N^\pm]^{S_n} \bigg/
\bigcap_{r \in {[N]\choose n}} \ker\left(\vert_r\right)  $$
where the $\mathcal S_n$ permutes the $y$ Laurent variables, which are 
the Chern roots of the tautological $n$-plane bundle on the Grassmannian.
Thus to describe a class, it suffices to give a Laurent polynomial 
and check its symmetry in the $y$s.

\begin{conj}\label{conj:ourconj1}
  Assume $N$ even. 
  There exists a $T$-equivariant coherent rank $1$ sheaf $\sh_r$ 
  supported on $CX^r$, defined in \S\ref{ssec:beyond},
  whose class in $K_T(T^\ast Gr(n,N))$ is represented by
\begin{equation}\label{eq:ourconj1}
[\sh_r]=m_r\ 
\prod_{i,j=1}^n a(y_i/y_j)^{-1}
\sum_{\substack{\text{CPLs with}\\\text{top-connectivity $r$}}}
\tau^{|\text{loops}|}
\prod_{i=1}^n \prod_{j=1}^N \begin{cases} a(y_i/z_j)&\tikz[baseline=0]{\plaq{a}}
\\[4mm] b(y_i/z_j)&\tikz[baseline=0]{\plaq{b}}
\end{cases}
\end{equation}
where the product is over rows $i$ and columns $j$ of the grid (the choice of $a$ or $b$ depending
on the type of plaquette at $(i,j)$),
the various plaquette weights are given by
\begin{align*}
a(x)&=t^{-1/2}x^{1/2}-t^{1/2} x^{-1/2}
\\
b(x)&=x^{-1/2}-x^{1/2}
\\
\tau&=t^{-1/2}+t^{1/2},
\end{align*}
and $m_r$ is a monomial (with unit coefficient) in the $z_i^{1/2}$ and $t^{1/2}$.
\end{conj}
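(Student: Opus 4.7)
Since the excerpt promises a proof only in the subGrassmannian case — when $r$ is a rectangular Young diagram and $X^r$ is a smooth subGrassmannian — I sketch that case.

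In the smooth case, $CX^r \subseteq T^*Gr(n,N)$ is the total space of the conormal bundle of a subGrassmannian, itself smooth and of the expected codimension. I expect the sheaf $\sh_r$ whose construction occupies \S\ref{ssec:beyond} to reduce to $\mathcal{O}_{CX^r}$ (up to the monomial twist $m_r$). The equivariant $K$-class $[\mathcal{O}_{CX^r}] \in K_T(T^*Gr(n,N))$ is then accessible from the Koszul resolution of $CX^r$ inside $T^*Gr(n,N)$, yielding an explicit rational expression in the Chern roots $y_i$ of the tautological bundle and the weights $z_j, t$.

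For the RHS, the top-connectivity link pattern of a rectangular $r$ is sufficiently rigid that most plaquettes of any admissible CPL are frozen, with the remaining freedom confined to a rectangular window determined by the corners of $r$. This freedom is parametrized by rhombus tilings of the window, equivalently plane partitions in a box — exactly the bijection advertised in the abstract. The partition function on the RHS therefore factors as a frozen monomial times a weighted plane-partition sum of MacMahon type in the $a,b$ variables, which can be evaluated in closed form.

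The final step is to identify the Koszul expression with the plane-partition sum. The cleanest strategy is to check that both sides satisfy the same exchange relations under the adjacent transposition $z_j \leftrightarrow z_{j+1}$: on the RHS this is the Yang--Baxter equation satisfied by the plaquette weights $a, b$; on the LHS it should come from an $R$-matrix action on $K_T(T^*Gr(n,N))$ in the spirit of \cite{MO-qg,RTV-K}. Combined with a base case such as $r = \{1,\ldots,n\}$ (where $X^r = Gr(n,N)$ itself and both sides collapse to a trivial expression), this pins down the equality, and $m_r$ is then fixed by a single normalization.

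The principal obstacle is the $R$-matrix intertwining property on the $K$-theory side: it requires constructing a family of sheaves interpolating $\sh_r$ across the wall $z_j = z_{j+1}$ and controlling its behaviour there. A more geometric alternative — the one signalled by the abstract — is to degenerate the pair $(\mu(CX^r), \mu_*\sh_r)$ into a scheme whose irreducible components are indexed by plane partitions, each contributing one summand of the RHS; here the difficulty shifts to proving flatness of the degeneration and matching the components' structure sheaves to the correct $a/b$-weighted CPL monomials.
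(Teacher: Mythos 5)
Your proposal identifies the right cast of characters (subGrassmannian case, Yang--Baxter, plane partitions, degeneration) but misassembles them, and it contains a concrete error about the sheaf itself.

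First, the identification $\sh_r = \calO_{CX^r}$ is wrong. In the rectangular case the paper's construction in \S\ref{ssec:beyond} gives $\sh_r = f^*\calO(a)$ with $a = N/2 - b - c$, a nontrivial line bundle pulled up from the Grassmannian base $X^{c\times b} \cong Gr(b,b+c)$. This twist is not cosmetic: the integer $a$ is precisely what determines the size of the plane-partition box later, and $a=0$ (the only case where $\sh_r$ really is the structure sheaf) is degenerate. A Koszul-resolution computation of $[\calO_{CX^r}]$ would simply give the wrong answer.

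Second, and more structurally, you conflate Conjecture~\ref{conj:ourconj1} with Conjectures~\ref{conj:ourconj2} and \ref{conj:geomRS}. Conjecture~\ref{conj:ourconj1} concerns the class in $K_T(T^*Gr(n,N))$ itself, before any pushforward; the degeneration of $(\mu(CX^r),\mu_*\sh_r)$ and its plane-partition components (\S\ref{sec:degen}, \S\ref{sec:conclusion}) compute only the pushforward to the affinization/point, which is the content of Conjectures~\ref{conj:ourconj2} and~\ref{conj:geomRS}. Nor does the CPL partition function itself ``factor as a frozen monomial times a plane-partition sum'' — it is a polynomial in both sets of spectral parameters $y_i, z_j$, and plane partitions only emerge after the affinization.

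What the paper actually does for Conjecture~\ref{conj:ourconj1} is equivariant localization: it reformulates the conjecture as the fixed-point restriction statement (Conjecture~\ref{conj:ourconj1'}), then proves via Lemma~\ref{lem:zero} and Lemma~\ref{lem:sym} that the combinatorial quantities $Z_{r,s}$ are entirely determined by their vanishing ($s\not\subseteq r$), their value at $s=r$ (a single explicit product), and an exchange symmetry $\tau_i Z_{r,s} = Z_{r,\tau_i s}$ whenever $i,i+1$ are not connected in $r$. When $r$ is a rectangle, the full link pattern has no chords in the middle block $\{\bar n-c+1,\ldots,\bar n+b\}$, so these symmetries suffice to reach every $s \subseteq r$. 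On the geometric side, the matching properties of $[\sh_r]|_s$ follow from (i) $\CC^s \notin CX^{c\times b}$ when $s\not\subseteq r$, (ii) a direct tangent-space computation of $[\sh_r]|_r$ (eq.~\eqref{eq:shll}), and (iii) $GL(b+c)$-equivariance of the sheaf, which intertwines the fixed-point restrictions exactly the way Lemma~\ref{lem:sym} intertwines the $Z_{r,s}$. The ``$R$-matrix intertwining property on the $K$-theory side'' that you flag as the principal obstacle is thereby avoided entirely: the paper uses Yang--Baxter only on the CPL side, and ordinary group equivariance on the geometric side, with the match enforced at the single point $s=r$.
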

\junk{the denominator is due to the different embedding than in the matrix schubert case -- moment map. here of course we never talk about matrix schubert varieties}

The r.h.s.\ of \eqref{eq:ourconj1} may not seem well-defined in
$K_T(T^\ast Gr(n,N))$ due to the presence of a denominator, but we
show in Remark~\ref{rmk:welldef} that it is. The symmetry in the $y$s
will also be proven, in Lemma~\ref{lem:sym0}. Lastly, note that $m_r$
can be absorbed into $[\sh_r]$, up to issues with square roots
of the $z_i$, by tensoring
$\sh_r$ with a trivial line bundle; one could even dispense with 
the ``$N$ even'' hypothesis by allowing square roots of the Chern roots
$y_i$ as well.

\junk{PZJ: actually, for $N\ne 2n$, we might want to throw in an extra power of the determinant bundle $\prod_i y_i$ --
in particular it's necessary for $N$ odd to avoid the
annoying $\sqrt{y}$ issue. the current value corresponds to twisting with $O(N/2-1)$; any value between
$O(n-1)$ and $O(N-n-1)$ seems OK from the point of view of birationality of $\mu$ $\Leftrightarrow$ pushforward
to a point non zero, generalizing 1st part of conj 2. 
AK: really quite sure that cohomology vanishing isn't the whole story.
I'm willing to throw in one power. But best just not to address this, surely}

\junk{PZJ:
\[
m_r=t^{|r|/2}wt(\sh_r)\prod_{i=1}^n z_{r_i}^{r_i-i-(N-n)/2} z_{\bar r_i}^{\bar r_i-i-n/2} 
\]
where $wt(\sh_r)$ (weight of $\sh_r$ at $r$, assuming,
as will be the case, that $\sh_r$ is a line bundle, i.e., a singly-generated
module, around this [smooth] point)
equals
$\prod_i z_{r_i}^{-a}$
in the Gorenstein case. not sure in general what the most natural choice is, 
though $\prod GL(\cdot)$-equivariance, i.e., symmetry in the $z$'s inside
groups of nonpaired vertices imposes that the exponent of $z_i$ is constant
on such groups. note that exponents have a simple meaning in terms of the Young diagram: they're the centered y coordinates of
horizontal steps, and the centered x coordinates of vertical steps. within a group (right step...right step up step...up step) they
are constant among the right steps and among the up steps, but right at the transition (where a box sticks out) they jump by
precisely the number $a$ marked in the box (number of chords above that point). minimal fix (which isn't consistent with
the (a,b,c)/Gorenstein convention): shift all the right steps by that particular $a$. AK: all this is just to spell out $m_r$, right? If you're happy
with the formula above then stick it in the conjecture}

In the sections to come we won't be concerned with the sheaves $\sh_r$
so often as their sheaf cohomology groups, which we conjecture to all
vanish unless the map from $CX^r$ to its affinization doesn't drop dimension.
In \S \ref{sec:fiber} this geometric condition will be
shown equivalent to a Dyck path condition on $r$, and the vanishing
conjecture established for $H^0$ when $X^r$ is Gorenstein. 

\subsection{Polynomial solution of the level 1 quantum Knizhnik--Zamolodchikov equation}\label{sec:qKZ}
We now restrict to the case $N=2n$. This allows for the possibility to have full link patterns, that is, link patterns for which
every vertex is paired. We denote the set of full link patterns by $\LPN$;
from the point of view of Young diagrams, it is exactly the subset of Young diagrams which are inside
the ``staircase'' diagram $\stc:=\{2i,\ 1\le i\le n\}$. Its cardinality is the Catalan number $c_n=\frac{(2n)!}{n!(n+1)!}$.

\junk{$t^{1/2}=-q^{-1}$ compared to standard integrable conventions, except my papers with PDF have $q\to q^{-1}$}

We first recall the following
\begin{thm}[\cite{Kasa-wheel,Pas-RS,artic41}]\label{thm:wheel}
The space of polynomials in $N$ variables $z_1,\ldots,z_N$ of degree at most
$n(n-1)$
satisfying the {\em wheel condition}
\[
\left\{ P\in \mathbb C(t^{1/2})[z_1,\ldots,z_N]:\  
P(\ldots,z,\ldots,tz,\ldots,t^2z,\ldots)=0
\right\}
\]
is of dimension $c_n$ over $\mathbb C(t^{1/2})$. 
It has a basis indexed by link patterns
$(\Psi_r)_{r\in \LPN}$ given by the dual basis condition
\[
\Psi_r
\left( z_i=\begin{cases}
t^{-1/2}& i\in \bar s\\
t^{1/2}& i\in s
\end{cases}, i=1,\ldots,N\right)
= \delta_{r,s} \tau^{|r|},
\qquad r,s\in \LPN
\]
where $\tau=t^{1/2}+t^{-1/2}$.
\end{thm}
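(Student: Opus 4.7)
The plan is to prove $\dim\le c_n$ by an evaluation-injectivity argument, construct the $\Psi_r$ explicitly to obtain the reverse inequality, and then identify the dual basis by triangularity. For each $s\in\LPN$, introduce the specialization point $\xi_s\in(\CC(t^{1/2}))^N$ with $i$-th coordinate $t^{-1/2}$ for $i\in\bar s$ and $t^{1/2}$ for $i\in s$. These points are \emph{wheel-free}: the only coordinate values occurring are $t^{\pm 1/2}$, so no triple of indices $a<b<c$ can realize a ratio pattern $1:t:t^2$, and the wheel condition does not force $P(\xi_s)$ to vanish a priori. The target is to show that the evaluation map $P\mapsto (P(\xi_s))_{s\in\LPN}$ is injective on the wheel space.

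I would argue injectivity by induction on $n$, reducing $(n,N)$ to $(n-1,N-2)$. Freeze $z_N=t^{1/2}u$; the wheel condition applied to triples $(i,j,N)$ forces $P$ to vanish on large subvarieties, and combined with the degree bound together with the vanishing at those $\xi_s$ for which $N$ is a closing of a chord, one extracts linear factors of the form $(t^{1/2}z_i-t^{-1/2}z_j)$ dividing $P$. Peeling them off yields a residual polynomial in the remaining variables, still satisfying a wheel condition at level $n-1$, and vanishing at the induced specializations $\xi_{s'}$ for $s'\in LP(N-2)$. Induction on $n$ concludes, and the degree bound $n(n-1)$ is exactly what the recursion accommodates.

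For the lower bound, I would construct the $\Psi_r$ explicitly. Start with the fully nested link pattern $r_0$ pairing $i$ with $N+1-i$, for which a product formula
\[
\Psi_{r_0}(z_1,\ldots,z_N)\ \propto \prod_{\substack{1\le i<j\le N\\ i+j>N+1}}(t^{1/2}z_i-t^{-1/2}z_j)
\]
visibly satisfies the wheel condition, has total degree exactly $n(n-1)$, and evaluates up to scalar to $\tau^{|r_0|}$ at $\xi_{r_0}$. Obtain every other $\Psi_r$ by repeatedly applying Hecke/Temperley--Lieb exchange operators acting on consecutive pairs $(z_i,z_{i+1})$: these preserve the wheel space and implement the elementary chord moves that generate $\LPN$ from $r_0$. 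The dual basis relation $\Psi_r(\xi_s)=\delta_{r,s}\tau^{|r|}$ then follows by triangularity of the Hecke action with respect to a suitable partial order on $\LPN$, together with a direct evaluation at $\xi_{r_0}$.

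The main obstacle is the injectivity step in the upper bound. Tracking precisely which linear factors the wheel condition forces out of $P$, and matching that with the Catalan-indexed specialization set, requires a careful induction in which the degree bookkeeping is tight. The Hecke-operator construction for the lower bound is conceptually cleaner but still demands verification that the operators preserve the wheel space, act transitively with the correct normalization, and produce the dual basis on the nose.
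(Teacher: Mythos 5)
The paper itself offers no proof of Theorem~\ref{thm:wheel}; it is imported verbatim from \cite{Kasa-wheel,Pas-RS,artic41}, so there is no in-text argument to compare against. Your overall plan (specialization-injectivity for the upper bound, an explicit factorized base state propagated by Temperley--Lieb/Hecke exchange operators for the lower bound and dual-basis normalization) is precisely the route taken in those references, and the observation that the points $\xi_s$ are wheel-free is the right starting remark.

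However, the explicit product formula you propose for the base state is wrong. If $r_0$ pairs $i$ with $N+1-i$, then the set of closings is $r_0=\{n+1,\ldots,2n\}$, so $\xi_{r_0}$ has $z_i=t^{-1/2}$ for $i\le n$ and $z_i=t^{1/2}$ for $i>n$, and $|r_0|=0$, hence the dual-basis condition demands $\Psi_{r_0}(\xi_{r_0})=\tau^0=1\ne 0$. But your product contains the factor $(i,j)=(n,n+2)$ (since $n+(n+2)=2n+2>N+1$), which evaluates at $\xi_{r_0}$ to $t^{1/2}t^{-1/2}-t^{-1/2}t^{1/2}=0$, so the claimed $\Psi_{r_0}$ vanishes exactly where it must not. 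Your product also fails the wheel condition: for the wheel triple $(1,2,3)$ with $N\ge 5$, neither $(1,2)$ nor $(2,3)$ satisfies $i+j>N+1$, so no factor is forced to vanish. The correct factorized base state is the two-block product
\[
\Psi_{r_0}\ \propto\ \prod_{1\le i<j\le n}\bigl(t^{1/2}z_i-t^{-1/2}z_j\bigr)\ \prod_{n+1\le i<j\le 2n}\bigl(t^{1/2}z_i-t^{-1/2}z_j\bigr),
\]
which has degree $2\binom{n}{2}=n(n-1)$, satisfies the wheel condition because any wheel triple has two consecutive indices in the same block, is nonzero at $\xi_{r_0}$, and vanishes at every $\xi_s$ with $s\ne r_0$: the Dyck-path constraint on a full link pattern gives $1\notin s$, so if $s$ meets $\{1,\ldots,n\}$ at all there are $i<j$ in that block with $i\in\bar s$, $j\in s$. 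With this corrected base the rest of your outline is sound in spirit, but be aware that the injectivity recursion as sketched is still far from a proof; the factor-extraction and degree bookkeeping there is where the real work lies.
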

The $\Psi_r$ are homogeneous of degree $n(n-1)$.
They have remarkable properties, many of which follow from the 
(level 1) quantum Knizhnik--Zamolodchikov ($q$KZ) equation.
Given a vector $\Psi$ with entries $\Psi_r$ in a basis indexed by $\LPN$,
the level 1 $q$KZ system is:
\begin{align*}
\Psi(z_1,\ldots,z_{i+1},z_i,\ldots,z_N)&=\check R_i(z_i/z_{i+1}) \Psi(z_1,\ldots,z_i,z_{i+1},\ldots,z_N),
\qquad i=1,\ldots,N-1
\\
\Psi(z_2,\ldots,z_N,t^3 z_1) &= (-t^{1/2})^{3(n-1)} \rho \Psi(z_1,\ldots,z_N)
\end{align*}
where $\check R_i(z)=\frac{z-t+t^{1/2}(1-z)e_i}{1-t\,z}$,
$e_i$ is the Temperley--Lieb operator \tikz[scale=0.75,rotate=45,baseline=-3pt]{\plaq{b}} acting on link patterns by connecting $i$ and $i+1$
(with a weight of $\tau$ if they were already connected),\footnote{Note that if we similarly identify
the identity with \tikz[scale=0.75,rotate=45,baseline=-3pt]{\plaq{a}}, then $\check R_i$ is nothing but the combination of plaquettes occurring
in Conj.~\ref{conj:ourconj1}, up to normalization. See also \S\ref{sec:CPL}.}
and
$\rho$ is the rotation operator that shifts cyclically to the right link patterns.
We shall not make use of this system of equations in the present work and refer to \cite{artic34,hdr} for details.

We claim the following
\begin{conj}\label{conj:ourconj2}
The pushforward of $\sh_r$ to a point in localized 
$T$-equivariant $K$-theory is equal, up to normalization, to $\Psi_r$:
\[
\pi_*[\sh_r]=
\begin{cases}
0& r\not\subseteq \stc
\\
(1-t)^{n(n-1)} \tilde m_r
\ 
\prod_{1\le i<j\le N} 
(1-t\, z_i/z_j)^{-1}
\ 
\Psi_r
&r\subseteq\stc
\end{cases}
\]
\end{conj}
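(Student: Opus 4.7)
The plan is to use the wheel-polynomial characterisation of Theorem~\ref{thm:wheel}: dividing $\pi_*[\sh_r]$ by the conjectured prefactor $(1-t)^{n(n-1)}\tilde m_r \prod_{i<j}(1-t z_i/z_j)^{-1}$, it suffices to show the quotient is a polynomial in the $z_i$ of degree at most $n(n-1)$, obeying the wheel vanishing, and taking the values $\delta_{r,s}\tau^{|r|}$ at the special points indexed by $s\in\LPN$. Assuming Conjecture~\ref{conj:ourconj1}, $\pi_*[\sh_r]$ is computed by the standard equivariant $K$-theoretic pushforward from $Gr(n,N)$ to a point, namely by symmetrising the Laurent polynomial in $y_1,\ldots,y_n$ appearing in \eqref{eq:ourconj1} against the Grassmannian Weyl denominator; the factor $\prod_{i,j}a(y_i/y_j)^{-1}$ combines naturally with this denominator into a row-by-row contour-integral form, and collecting residues yields exactly the factors $(1-tz_i/z_j)^{-1}$ of the prefactor, leaving a candidate polynomial $P_r(z)$ to compare with $\Psi_r$.

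Three independent checks then pin down $P_r=\Psi_r$. \emph{Degree:} a direct count in \eqref{eq:ourconj1}, using that each of the $nN$ plaquettes contributes at most degree one in $z_j^{\pm 1/2}$, bounds $\deg_z P_r$ by $n(N-n)=n(n-1)$. \emph{Wheel condition:} under the substitution $(z_i,z_j,z_k)=(z,tz,t^2z)$ with $i<j<k$, the plaquette identities $a(t)=0=b(1)$, combined with a standard Yang--Baxter / unitarity argument across the three columns, force every CPL contribution to carry a vanishing factor; this is the usual integrable-model derivation, already foreshadowed in the footnote identifying $\check R_i$ with the plaquette combination of \eqref{eq:ourconj1}. \emph{Dual basis values:} at $z_i=t^{\varepsilon_i/2}$ with signs $\varepsilon_i=\pm1$ determined by $s$, almost every plaquette weight vanishes and the surviving CPLs freeze to a single ``ice'' configuration whose top-connectivity is $r$ iff $s=r$; matching the residual weight against $\tau^{|r|}$ then fixes the normalising monomial $\tilde m_r$ and completes the identification.

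The vanishing clause for $r\not\subseteq\stc$ is the geometric statement that $\mu:CX^r\to\mu(CX^r)$ drops dimension, shown in \S\ref{sec:fiber} to be equivalent to failure of the Dyck-path condition; combined with the higher-cohomology vanishing of Proposition~\ref{prop:noHi}, this forces $H^0(CX^r;\sh_r)=0$ and hence $\pi_*[\sh_r]=0$. The main obstacle is therefore not the combinatorial verification but its geometric prerequisites: outside the subGrassmannian case neither $\sh_r$ itself nor the higher-cohomology vanishing needed to identify $\pi_*[\sh_r]$ with a character of global sections is yet available, so Conjecture~\ref{conj:ourconj2} is blocked by the same gap that blocks Conjecture~\ref{conj:ourconj1}. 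In the subGrassmannian/Gorenstein setting of this paper both are in place, and the plan above then reduces the statement to a routine, if intricate, wheel-characterisation check.
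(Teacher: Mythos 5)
Your proposal takes a genuinely different route from the paper. You propose to assume Conjecture~\ref{conj:ourconj1} and then use equivariant localization plus the wheel-polynomial characterization of Theorem~\ref{thm:wheel} (polynomiality, degree, wheel vanishing, dual-basis evaluations) to identify the normalized $\pi_*[\sh_r]$ with $\Psi_r$. The paper explicitly acknowledges that ``general equivariant localization arguments allow to show that Conj.~\ref{conj:ourconj1} implies Conj.~\ref{conj:ourconj2} without any need to use Gr\"obner degenerations,'' but chooses instead to prove Conjecture~\ref{conj:ourconj2'} from the Gr\"obner degeneration. Concretely, the paper derives the explicit plane-partition sum~\eqref{eq:final2} for $\mu_*[\sh_{c\times b}]$ and then checks the wheel condition and the dual-basis specializations \emph{on that formula}: the wheel vanishing is transparent because every element of $PP(a,b,c)$ must contain a lozenge of type $B$, $C$, or $BC$ at the relevant location, and the dual-basis evaluation kills all but the ``full'' lozenge tiling, whose surviving weight is $(1-t)^{n(n-1)}\tau^{bc}$. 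Your integrability route would buy independence from the degeneration machinery, but is more opaque at the level of explicit formulas; the paper's route buys Conjecture~\ref{conj:geomRS'} (the Razumov--Stroganov decomposition) essentially for free along the way.

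There are concrete gaps in your sketch. Your degree count asserts $n(N-n)=n(n-1)$; with $N=2n$ this is $n^2$, not $n(n-1)$, so as written the bound is off and would not suffice. More importantly, the wheel and dual-basis checks are applied not to $Z_r$ but to the quotient of $\pi_*[\sh_r]$ by the prefactor, and the passage from the CPL partition function through symmetrization against the Weyl denominator, the factor $\prod a(y_i/y_j)^{-1}$, and the relation $\mu_*[\sh_r]=\pi_*[\sh_r]\prod_{i<j}(1-tz_i/z_j)$ of Eq.~\eqref{eq:pi2mu} requires actual argument; invoking ``a standard Yang--Baxter / unitarity argument'' and ``freezing to an ice configuration'' is not a proof at the level the paper works. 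Your treatment of the $r\not\subseteq\stc$ vanishing is essentially right for the rectangular case (Proposition~\ref{prop:noHi} with $a<-|N/2-n|$ kills all cohomology), but note that the $H^0$ vanishing argument of \S\ref{sec:fiber} alone does not suffice for $\pi_*=0$ (which is an Euler characteristic), and the paper never claims the full vanishing outside the rectangular case. Finally, $\sh_r$ itself \emph{is} defined for arbitrary $G/P$ in \S\ref{ssec:beyond}; what is missing outside the rectangular case is the cohomology vanishing, not the sheaf.
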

Here $\tilde m_r$ is another monomial in the $z_i^{1/2}$, related
to the previous one by $\tilde m_r=\prod_{i=1}^N z_i^{n/2+1-i}\, m_r$.

\junk{
\[
\tilde m_r=
\frac{t^{\frac{|r|}{2}} wt(\sh_r)}{\prod_{i=1}^n z_{\bar r_i}^{i-1} z_{r_i}^{i-1}}
\]
$wt(\sh_r)=\prod_i z_{r_i}^{-a}$  in the Gorenstein case
}

We will prove these conjectures in a forthcoming paper. 

In fact, it would not be difficult to show that
Conj~\ref{conj:ourconj1} implies Conj.~\ref{conj:ourconj2}
(the first equation of the $q$KZ system is naturally satisfied
by $\pi_* [\sh_r]$ up to normalization of the $R$-matrix,
and together with the initial condition of $\pi_* [\sh_\varnothing]$,
it determines them uniquely).

\junk{PZJ: or we can put it in this paper, it's not too hard.
  AK: I'm in favor of just saying ``$1\implies 2$, but we won't
  take the space to show this in this paper'' if we don't include it.
PZJ: ok rewrite as you see fit}

The factor $\prod_{1\le i<j\le N} (1-t\, z_i/z_j)$
can be naturally interpreted
in terms of the weights of the space of strict upper triangular matrices
$\text{Mat}_<(N)$; see \S \ref{sec:reform2} for details.

The limit $t\to 1$ corresponds on the integrable side to the ``rational'' 
limit from the quantum KZ equation to the difference KZ equation;
on the geometric side, to the limit from $K$-theory to cohomology, 
where some results similar to Conjecture~\ref{conj:ourconj2} are known
\cite{artic34,artic56,artic64}.

\subsection{Fully Packed Loops and the Razumov--Stroganov correspondence}
A \defn{Fully Packed Loop configuration}\/ (in short, \defn{FPL}) is an assignment of two possible states (empty, occupied) to the edges of a $n\times n$ square grid such that every vertex is traversed by exactly one path, and the external edges
are alternatingly occupied and empty (declaring that the topmost left external edge is occupied); e.g., for $n=4$,
\begin{center}
\begin{tikzpicture}[scale=0.75]
\draw[double distance=1.6pt] (0.01,0.01) grid (4.99,4.99);
\begin{scope}[blue,ultra thick,scale=5,line join=round]
\draw (0.2,0) -- (0.2,0.2) -- (0.6,0.2) -- (0.6,0.0);
\draw (1,0.2) -- (0.8,0.2) -- (0.8,0.4) -- (0.8,0.6) -- (1,0.6);
\draw (0,0.4) -- (0.6,0.4) -- (0.6,0.8) -- (0.8,0.8) -- (0.8,1);
\draw (0,0.8) -- (0.2,0.8) -- (0.2,0.6) -- (0.4,0.6) -- (0.4,1);
\end{scope}
\end{tikzpicture}
\end{center}
Numbering the occupied external edges clockwise from the leftmost top one, we can associate to an FPL the connectivity
of these external edges encoded as a (full) link pattern; in the present example, \linkpattern[inverted]{1/8,2/7,3/4,5/6}.
(Actually the choice of the starting point for the labelling of the external edges is irrelevant, at least for enumerative purposes, since
Wieland~\cite{Wieland} constructs a bijection of FPLs which rotates cyclically the connectivity of the external edges.)
Denote by $\FPL_r$ the set of FPLs with connectivity given by link pattern $r$.

One way to see the connection with what precedes is 
\begin{conj}\label{conj:geomRS}
$\Psi_r$ can be decomposed as a sum of products of the form
\[
\Psi_r=
\sum_{f\in \FPL_r}
\prod_{\alpha=1}^{n(n-1)}
\frac{
t^{-r_{f,\alpha}/2}z_{j_{f,\alpha}}-t^{r_{f,\alpha}/2} z_{i_{f,\alpha}}
}
{
t^{-1/2}-t^{1/2}
}
\]
where $r_{f,\alpha}\in \{1,2\}$.
\end{conj}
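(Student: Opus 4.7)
The plan is to derive Conjecture~\ref{conj:geomRS} from Conjectures~\ref{conj:ourconj1} and~\ref{conj:ourconj2} together with the geometric degeneration of $(\mu(CX^r),\mu_*\sh_r)$ alluded to in the abstract; in particular, in the subGrassmannian case all the ingredients are established in this paper, so the proof could be carried out unconditionally there. By Conj.~\ref{conj:ourconj2}, computing $\Psi_r$ reduces to computing $\pi_*[\sh_r]$, up to the explicit prefactor $(1-t)^{n(n-1)}\,\tilde m_r\,\prod_{i<j}(1-t\,z_i/z_j)^{-1}$. Thus it suffices to produce an $\FPL_r$-indexed decomposition of $\pi_* [\sh_r]$ itself.

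First I would use the degeneration over the affinization to write
\[
\pi_*[\sh_r] \;=\; \sum_{C} \chi_T(\mathcal M_C),
\]
where $C$ runs over the irreducible components of the degenerate base and $\mathcal M_C$ is the cyclic module summand of the degenerate sheaf supported on $C$. By the construction invoked in the abstract these components are indexed by plane partitions; in the subGrassmannian case the relevant plane partitions live inside a combinatorially explicit box determined by the rectangle $r$ and the $A_3$ quiver cycle $\mu(CX^r)$.

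Next I would construct an explicit bijection between these plane partitions and $\FPL_r$. The cardinalities must agree by the enumerative Razumov--Stroganov identity, so a bijection exists; the point is naturality. I expect such a bijection to be produced along the lines of a Lindström--Gessel--Viennot-type correspondence between plane partitions/rhombus tilings and families of non-crossing lattice paths, the latter being recognizable as (halves of) FPL configurations with prescribed external connectivity. Wieland's gyration, which acts on $\FPL_r$ by cyclic rotation of $r$, should provide a useful consistency check.

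Finally, I would match weights component by component. Each $C$ is a toric variety of dimension $n(n-1)$, and $\chi_T(\mathcal M_C)$ factors as a monomial times $\prod_{\alpha=1}^{n(n-1)}(1-w_\alpha)^{-1}$, where the weights $w_\alpha$ are those of the $n(n-1)$ toric tangent directions of $C$. The claim is that, after clearing the prefactor of Conj.~\ref{conj:ourconj2}, these weights reorganize into the product $\prod_\alpha\frac{t^{-r_{f,\alpha}/2}z_{j_{f,\alpha}}-t^{r_{f,\alpha}/2}z_{i_{f,\alpha}}}{t^{-1/2}-t^{1/2}}$, where the discrete datum $r_{f,\alpha}\in\{1,2\}$ records whether the associated tangent direction of $C$ lies in a single or a doubled stratum of the degeneration. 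This last step is the main obstacle: one must identify exactly which pairs $(i_{f,\alpha},j_{f,\alpha})$ and which values $r_{f,\alpha}$ are read off from each plane partition, and verify that this combinatorial extraction agrees with the intrinsic toric weights of $C$. Beyond the subGrassmannian case, one would additionally need the yet-unproven Conj.~\ref{conj:ourconj2}, and would likely have to replace each toric $C$ by a more general affine degeneration whose character still factors into $n(n-1)$ binomials of the required shape.
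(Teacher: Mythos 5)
Your overall architecture matches the paper's: degenerate $(\mu(CX^r),\mu_*\sh_r)$ over the affinization, decompose the degenerate module into cyclic summands indexed by plane partitions, identify plane partitions with FPLs, and read off the factored weights. Two of your ingredients are already supplied by the paper: the bijection $PP(a,b,c)\leftrightarrow\FPL_{(a,b,c)}$ is not something to rebuild via an LGV-type argument but precisely Theorem~\ref{thm:fpl-pp} (cited from~\cite{artic27}), and the direct-sum decomposition $\bigoplus_{S\in PP(a,b,c)}F_S$ over the toric complete-intersection pieces $A_S$ is Theorem~\ref{thm:directsum}.

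The step that would actually fail, as you have written it, is the weight-matching. You assert that each component $C=\Spec A_S$ has dimension $n(n-1)$ and that $\chi_T(\mathcal M_C)$ is a monomial times $\prod_{\alpha=1}^{n(n-1)}(1-w_\alpha)^{-1}$ over the toric tangent directions of $C$, after which you expect cancellation against the prefactor of Conjecture~\ref{conj:ourconj2}. Neither claim is correct, and the two errors do not rescue each other. In fact $\Spec A_S$ has dimension $n^2$ (it is a degeneration of the Lagrangian $CX^r$), and the quantity $n(n-1)$ is its \emph{codimension} inside $\Mat_<(N)$. Since $\Spec A_S\subseteq\Mat_<(N)$ is a complete intersection (Proposition~\ref{prop:LS}), its class in $K_T(\Mat_<(N))\cong K_T(pt)$ is the \emph{product of numerators} $\prod(1-w_{\text{rel}})$ over the $n(n-1)$ defining equations (the zero-block linear forms, the monomial kills $B_{i,j}=0$, $C_{j,k}=0$, and the $bc$ quadratic binomials), not an inverse product over tangent weights. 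The factorization into $n(n-1)$ binomials in $\Psi_r$ arises because one works with the \emph{proper} pushforward $\mu_*$ and these normal (not tangent) weights, as in Equation~\eqref{eq:final}; the exponent $r_{f,\alpha}\in\{1,2\}$ records the degree of the defining relation (linear giving $1-t\,z_i/z_j$, quadratic $BC$ giving $1-t^2z_i/z_k$), not a ``single or doubled stratum.'' Without the complete-intersection/$\mu_*$ mechanism, the rational expression you write down does not obviously clear its denominators, and the decomposition into binomials does not emerge.
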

This conjecture 
is formulated somewhat implicitly in
\cite[\S 4]{DF-qKZ-TSSCPP}.

Specializing the $z_i$ to $1$ and $\tau$ to $1$ (i.e.,\ $t$ to a nontrivial cubic root of unity), 
leads to the following result, which,
by combining the $q$KZ approach \cite{artic31,artic34,hdr} to loop models 
and the proof \cite{CS-RS} by Cantini and Sportiello of the Razumov--Stroganov conjecture \cite{RS-conj}, is actually a theorem:
\begin{thm}
\[
\Psi_r(z_i=1,\ i=1,\ldots,N;\ \tau=1) = |\FPL_r|
\]
\end{thm}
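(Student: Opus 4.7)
The plan is to combine two ingredients already in the literature: the level-$1$ $q$KZ construction of $\Psi_r$ at the combinatorial point (\cite{artic31,artic34,hdr}), and the Cantini--Sportiello proof \cite{CS-RS} of the Razumov--Stroganov conjecture. Both produce the same intermediate object, namely the unique positive Perron--Frobenius eigenvector $\Omega$ of the stochastic $O(1)$ dense loop transfer matrix on $\mathbb C[\LPN]$, equivalently the groundstate of the periodic Temperley--Lieb Hamiltonian $\sum_{i=1}^{N}(1-e_i)$ on that same module. The strategy is to identify both $\Psi_r(1,\ldots,1;\tau=1)$ and $|\FPL_r|$ with the $r$-component of $\Omega$; proportionality plus any single normalization check then yields equality.

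For the first identification I would start from the $q$KZ system of \S\ref{sec:qKZ}. At $\tau=1$, i.e.\ $t$ a primitive cube root of unity, one has $t^3=1$, so the rotation equation at $z_1=\cdots=z_N=1$ collapses to $\Psi = (-1)^{n-1}\rho\,\Psi$. Together with the exchange equation this realises $(\Psi_r(1,\ldots,1;1))_{r\in\LPN}$ as a fixed vector of the homogeneous transfer matrix. The cleanest route is to prove the stronger statement that the unspecialised $\Psi$ is an eigenvector of the inhomogeneous transfer matrix $T(w;z_1,\ldots,z_N)$ for every spectral parameter $w$ (a Bethe-ansatz-style computation using $\check R$-commutations and the wheel characterisation of Theorem~\ref{thm:wheel}), as carried out in \cite{artic31,artic34}; specialising $w$ and the $z_i$ afterwards gives the desired eigenvector property, and Perron--Frobenius fixes the vector up to a positive scalar (here equal to $1$ thanks to the dual-basis normalization of Theorem~\ref{thm:wheel}).

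For the second identification I would invoke \cite{CS-RS} directly. Using Wieland's gyration \cite{Wieland} as the underlying symmetry, Cantini and Sportiello produce, for each Temperley--Lieb generator $e_i$, a local bijective move on $\bigsqcup_r \FPL_r$ that matches FPLs whose link patterns differ by the expected $e_i$-transition; summing shows that $(|\FPL_r|)_{r\in\LPN}$ is annihilated by $\sum_i(1-e_i)$, hence equals $c\,\Omega$ for some positive $c$. Comparing a single entry where both sides are manifestly $1$ (e.g.\ the nested-rainbow pattern, whose FPL set is a singleton) fixes $c=1$, giving $|\FPL_r|=\Psi_r(1,\ldots,1;\tau=1)$.

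The main obstacle, if one were redoing everything from scratch, is the first identification: extracting the combinatorial-point eigenvector property requires the full $q$KZ/Bethe-ansatz machinery together with the rigidity of the wheel condition. The CS step is delicate combinatorially but, given Wieland gyration as input, is structurally short; it is the technical heart of the classical Razumov--Stroganov statement and is now an established theorem, so the real work in this plan is assembling the two blocks and checking normalizations.
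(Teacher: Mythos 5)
Your proposal matches the paper's approach exactly: the paper gives only a citation-level argument, saying the theorem follows ``by combining the $q$KZ approach \cite{artic31,artic34,hdr} to loop models and the proof \cite{CS-RS} by Cantini and Sportiello of the Razumov--Stroganov conjecture.'' You have fleshed out precisely that roadmap, identifying both sides with the Perron--Frobenius eigenvector of the $O(1)$ dense loop transfer matrix and fixing the normalization via the dual-basis condition of Theorem~\ref{thm:wheel} (resp.\ the singleton FPL class).
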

Deriving Conj.~\ref{conj:geomRS} from the properties
of the sheaf $\sh_r$ would provide a geometric
justification for the Razumov--Stroganov correspondence.

\subsection{The rectangular case and plane partitions}
\label{sec:res}
In the present work, we study $\sh_r$ in the case that $r$ is a rectangular
Young diagram:
\begin{equation}\label{eq:defabc1}
r=
\begin{tikzpicture}[baseline=-1.1cm]
\node[tableau] {
&&&\hcell&\hcell&\hcell&\hcell
\\
&&
\\
\vcell
\\
\vcell
\\
\vcell
\\
};
\draw[latex-latex] (-0.2,-2) -- node[left=1mm] {$\ss N-n-c$} (-0.2,-0.52);
\draw[latex-latex] (-0.2,-0.48) -- node[left=1mm] {$\ss c$} (-0.2,0.5);
\draw[latex-latex] (0,0.7) -- node[above=1mm] {$\ss b$} (1.48,0.7);
\draw[latex-latex] (1.52,0.7) -- node[above=1mm] {$\ss n-b$} (3.5,0.7);
\end{tikzpicture}
\end{equation}
for two nonnegative integers $b,c$. We also define for future use
$a=N/2-(b+c)$. We then prove Conj.~\ref{conj:ourconj1} in that case.

We further specialize to $N=2n$, as in \S \ref{sec:qKZ}.
If $a<0$, we shall immediately conclude that
$\pi_*[\sigma_r]=0$, corresponding to the trivial case of Conj.~\ref{conj:ourconj2}. If $a\ge0$, we note that the link pattern corresponding to $r$ is of type ``$(a,b,c)$'', that is, of the form
\begin{equation}\label{eq:defabc2}
r=
\begin{tikzpicture}[baseline=-0.5cm]
\begin{scope}[yscale=-1]
\linkpattern[squareness=0.4,tikzstarted]{2/11}
\linkpattern[squareness=0.45,tikzstarted]{1/12}
\linkpattern[squareness=0.55,tikzstarted]{3/6,4/5,7/10,8/9}
\node at (3.9,1.85) {$\sss\vdots$};
\node at (4.2,1.9) {$a$};
\node at (2.7,0.4) {$\sss\vdots$};
\node at (2.9,0.5) {$b$};
\node at (5.1,0.4) {$\sss\vdots$};
\node at (5.3,0.5) {$c$};
\end{scope}
\end{tikzpicture}
\end{equation}
We shall then prove Conj.~\ref{conj:ourconj2} and \ref{conj:geomRS} in that case.
Note that such a type of link pattern was already considered in \cite{Zuber-conj,artic27,artic38} in the
context of FPL enumeration and the Razumov--Stroganov conjecture, but without any connection to
geometry. In particular, the following result will play a role in what follows:
\begin{thm}[\cite{artic27}]\label{thm:fpl-pp}
There is a bijection between $FPL_{(a,b,c)}$
(FPLs with connectivity $(a,b,c)$) and $PP(a,b,c)$, which is by definition the set of plane partitions of size $c\times b$ and maximal height $a$.
\end{thm}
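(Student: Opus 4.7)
The plan is to construct an explicit combinatorial bijection that factors through lozenge tilings of an $(a,b,c)$ hexagon and then invokes MacMahon's classical identity. The first step is to show that in any FPL with $(a,b,c)$-connectivity, three corner regions of the $n\times n$ grid are \emph{frozen}: the edge configuration there is uniquely determined by the boundary connectivity alone. Each nest of $k$ arches (one of $a$, $b$, or $c$) forces a right-triangular frozen region of leg length $k$, situated at the appropriate boundary location. I would prove this by induction on $k$: the innermost arch of a nest, being a chord of length $2$, forces a unique pattern at the two adjacent external edges; peeling off this pattern reduces to an FPL on a slightly smaller grid with a nest of $k-1$ arches. This is the classical ``frozen region'' propagation argument, iterated across the three corners.

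Once the three frozen triangles of sizes $a$, $b$, $c$ are stripped away, what remains is a hexagonal region with side lengths alternating $a,b,c,a,b,c$, equipped with induced boundary data. The second step is a bijection between FPL configurations on this free region and lozenge tilings of that hexagon: at each interior grid vertex, the local edge configuration encodes one of the three lozenge orientations, and the global consistency of the FPL (unique-traversal at each vertex together with the induced boundary) is equivalent to the matching condition defining a valid tiling. The third step is the classical MacMahon bijection, which interprets a lozenge tiling of the $(a,b,c)$ hexagon as the top-view projection of a stack of unit cubes in an $a\times b\times c$ box, producing a plane partition in $PP(a,b,c)$, i.e.\ a $c\times b$ array with entries in $\{0,1,\ldots,a\}$. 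Composing the two bijections yields the theorem.

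The main obstacle is the freezing analysis of step one, particularly at the interfaces where the three nests meet. The nested structure, rather than the individual chord lengths, must force full triangular freezing, and the resulting free region must be precisely the claimed hexagon. The argument requires a careful propagation combining the FPL vertex rule, the alternation of occupied/empty external edges, and the parity constraints imposed by the given connectivity; one also needs to verify that the induced boundary conditions on the free region are exactly those that make step two a genuine bijection rather than merely a well-defined map. Everything after this freezing analysis is either classical or a routine local check.
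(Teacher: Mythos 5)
The paper states this theorem as a citation to \cite{artic27} (Di Francesco, Zinn-Justin, Zuber) and does not reprove it, so there is no in-paper argument to compare against; your proposal should be measured against that reference. In outline your route --- force frozen regions from the three nests of arches, recognize the residual free region as a hexagonal domain, convert FPL configurations on it to lozenge tilings, and finish with MacMahon's classical bijection --- is indeed the strategy of \cite{artic27}, and you correctly flag the freezing analysis as the hard part.

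Two points deserve more caution than your sketch allows. First, the frozen regions are not simply three right triangles hung on three corners of the $n\times n$ square. With the clockwise numbering of the $2n$ occupied external edges, the $a$-nest, $b$-nest and $c$-nest each occupy a specific arc of the boundary, and the induced forcing propagates along \emph{anti-diagonals}; the resulting frozen set and the way the residual region's six sides come about take real bookkeeping, which is most of the work in \cite{artic27}. Second, and more substantively, your step-two dictionary ``each interior vertex $\mapsto$ one of three lozenge orientations'' cannot be taken literally: an FPL vertex admits $\binom{4}{2}=6$ local edge states and lives on the square lattice, whereas lozenges of a hexagonal tiling have only three orientations and live in triangular geometry. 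The actual passage to tilings is a reinterpretation of the family of FPL paths crossing the free region as a non-intersecting lattice path/stepped-surface object (compare the NILP picture in Fig.~\ref{fig:loz} of this paper), not a vertex-by-vertex match. So you have correctly identified step one as the main obstacle, but step two also has genuine content and should not be characterized as ``a routine local check.''
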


For the purposes of this paper, it is best to use the following
definition of $PP(a,b,c)$: it is the set of $a$-tuplets of
$\subsarg{b}{b+c}$ for which the order $\le$ (pointwise
comparison of ordered elements of subsets) is a total order:
\[
PP(a,b,c)
:= \left\{(s_1,\ldots,s_a): s_1\le s_2\le\cdots\le s_a \right\}
\]
In turn, we can depict elements of $PP(a,b,c)$ in various equivalent
ways, as demonstrated in Fig.~\ref{fig:loz}: from top left to bottom right, 
plane partitions, lozenge tilings, Non-Intersecting Lattice
Paths (NILPs), dimer configurations.  These representations will be
discussed again in what follows when they are needed.

\def\a{2}\def\b{4}\def\c{3}
\def\pp{{2,1,1,0},{1,1,0,0},{1,0,0,0}}
\def\XY{(2,2), (4,1), (5,1), (6,1), (2,3), (3,3), (5,2), (6,2), (1,5), (3,4), (4,4), (6,3)}
\def\X{(3,2), (4,4), (5,6), (1,1), (1,2), (2,4)}
\def\Y{(3,4), (5,5), (6,5), (7,5), (1,1), (3,2), (5,3), (7,4)}
%
\let\mymatrixcontent\empty
  \foreach \row in \pp{
    \foreach \h in \row {%
        \expandafter\gappto\expandafter\mymatrixcontent\expandafter{\h \&}%
      }%
    \gappto\mymatrixcontent{\\}%
  }
\begin{figure}
\begin{tikzpicture}
\loz\XY\X\Y
\bigboxes
\node[tableau,ampersand replacement=\&,nodes in empty cells=false] at (-8,0) {\mymatrixcontent}; 
%
\begin{scope}[xshift=-6.8cm,yshift=-6.5cm]
\nilp\X\Y
\end{scope}
\begin{scope}[yshift=-6.5cm]
\dimer\XY\X\Y
\end{scope}
\node[outer sep=0.3cm] at (-3,-9.8) {$(\{1,3,5,7\},\{3,5,6,7\})\in PP(2,4,3)$};
\draw[latex-] (-5,0) -- node[above,align=center] {\scriptsize record heights\\[-1.5mm]\scriptsize viewed from top} (-2.8,0);
\draw[latex-latex] (0,-2.5) -- node[above,rotate=-90] {\scriptsize dual} (0,-4.2);
\draw[latex-] (-4.7,-6.2) -- node[above,align=center] {\scriptsize squash\\[-1.5mm]\scriptsize horizontal edges} (-2.7,-6.2);
\draw[-latex] (-7,-1.8) -- node[above,rotate=-90,align=center] {\scriptsize level curves,\\[-1.5mm]\scriptsize shifted} (-7,-4.2);
\draw[-latex] (-1.5,-1.5) to node[above,sloped] {\scriptsize follow 
\tikz[scale=0.2,rotate=-45]{\def\a{1}\def\b{1}\def\c{1}\lozX(0,0)} 
and 
\tikz[scale=0.2,rotate=-45]{\def\a{1}\def\b{1}\def\c{1}\lozY(0,0)} 
} (-5.2,-5.2);
\draw[latex-] (-3.25,-9.2) -- node[sloped,above] {\scriptsize record down steps} (-4.75,-7.7);
\end{tikzpicture}
\caption{Various depictions of a particular $a=2,b=4,c=3$ configuration.}\label{fig:loz}
\end{figure}

\junk{These are in an obvious bijection with semistandard rectangular 
Young tableaux $\tau$ (of shape $b\times c$) with entries $\leq a+b$; 
given such a $\tau$, let $s_i = \{(j,k)\ :\ \tau_{(j,k)} \leq i+j$.
Note that these are {\em not}\/ the same rectangular tableaux that came
up in the last subsection; rather both are in bijection with plane
partitions inside the $a\times b\times c$ box.}

\subsection{Plan of the paper}

In \S 2 we study CPLs using integrability and in particular
the Yang-Baxter equation, with which we show that the conjectured
formula \ref{conj:ourconj1} has the right symmetry and 
vanishing properties.
In \S 3 we give detail on conormal varieties to Grassmannian Schubert
varieties, and define the sheaves referenced in conjectures 
\ref{conj:ourconj1} and \ref{conj:ourconj2}. 
Our definition of these sheaves in \S \ref{ssec:beyond}, for arbitrary $G/P$, 
is much more general than is required for the rest of the paper.
In \S 4 we specialize to case of a smooth Schubert subvariety
of a Grassmannian $Gr(n,2n)$, determine the conjectured sheaf in this case,
and compute the degree $2^{bc} |PP(a,b,c)|$ of the conormal variety.
In \S 5 we define the family degenerating the conormal variety
and the sheaf it bears, whose special fiber we determine in \S 6.
It turns out to have one component for each element of $PP(a,b,c)$,
a toric complete intersection of $bc$ many quadrics. 
With this in hand, we prove in \S 7 our conjectures \ref{conj:ourconj1} 
and \ref{conj:ourconj2} in the Grassmann-Grassmann case.

\section{Integrability of the CPL model}\label{sec:CPL}
\subsection{Basic properties}
Introduce the following notation, borrowed from integrable models:
\[
\begin{tikzpicture}[baseline=-3pt]
\draw[bgplaq] (-0.375,-0.375) rectangle (0.375,0.375);
\draw[invarrow=0.4,invarrow=0.9] (-0.5,0) -- (0.5,0) node[right] {$\ss y$} (0,-0.5) -- (0,0.5) node[above] {$\ss z$};
\end{tikzpicture}
=
a(y/z)\,
\begin{tikzpicture}[baseline=-3pt]
\plaq{a}
\end{tikzpicture}
+
b(y/z)\,
\begin{tikzpicture}[baseline=-3pt]
\plaq{b}
\end{tikzpicture}
\]
where recall from Conj.~\ref{conj:ourconj1} that 
$a(x):=t^{-1/2}x^{1/2}-t^{1/2} x^{-1/2}$ and
$b(x):=x^{-1/2}-x^{1/2}$.
The dotted lines are purely cosmetic and will be frequently omitted.

\begin{prop}\label{prop:ybe}
The following identities hold: the Yang--Baxter equation
\begin{equation}\label{eq:ybe}
\begin{tikzpicture}[baseline=-3pt,y=2cm]
\draw[arrow=0.1,arrow=0.4,arrow=0.7,rounded corners] (-0.5,0.5) node[above] {$\ss z_1$} -- (0.75,0) -- (1.5,-0.5) (0.5,0.5) node[above] {$\ss z_2$} -- (0.25,0) -- (0.5,-0.5) (1.5,0.5) node[above] {$\ss z_3$} -- (0.75,0) -- (-0.5,-0.5);
\end{tikzpicture}
=
\begin{tikzpicture}[baseline=-3pt,y=2cm]
\draw[arrow=0.1,arrow=0.4,arrow=0.7,rounded corners] (-0.5,0.5) node[above] {$\ss z_1$} -- (0.25,0) -- (1.5,-0.5) (0.5,0.5) node[above] {$\ss z_2$} -- (0.75,0) -- (0.5,-0.5) (1.5,0.5) node[above] {$\ss z_3$} -- (0.25,0) -- (-0.5,-0.5);
\end{tikzpicture}
\end{equation}
the unitarity equation
\begin{equation}\label{eq:unit}
\begin{tikzpicture}[baseline=-3pt]
\draw[arrow=0.07,arrow=0.57,rounded corners] (-0.5,1) node[above] {$\ss z_1$} -- (0.5,0) -- (-0.5,-1) (0.5,1) node[above] {$\ss z_2$} -- (-0.5,0) -- (0.5,-1);
\end{tikzpicture}
=
a(z_1/z_2)a(z_2/z_1)
\begin{tikzpicture}[baseline=-3pt]
\draw[arrow=0.1,arrow=0.6,rounded corners] (-0.5,1) node[above] {$\ss z_1$} -- (-0.5,-1) (0.5,1) node[above] {$\ss z_2$} -- (0.5,-1);
\end{tikzpicture}
\end{equation}
and the special value
\begin{equation}\label{eq:1}
\begin{tikzpicture}[baseline=-3pt]
\draw[arrow=0.1,arrow=0.6] (-0.5,0.5) node[above] {$\ss z$} -- (0.5,-0.5) (0.5,0.5) node[above] {$\ss z$} -- (-0.5,-0.5);
\end{tikzpicture}
=
(t^{1/2}-t^{-1/2})
\begin{tikzpicture}[baseline=-3pt]
\draw[arrow=0.1,arrow=0.6,rounded corners] (-0.5,0.5) node[above] {$\ss z$} -- (-0.1,0) -- (-0.5,-0.5) (0.5,0.5) node[above] {$\ss z$} -- (0.1,0) -- (0.5,-0.5);
\end{tikzpicture}
\end{equation}
\end{prop}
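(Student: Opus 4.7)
The plan is to reduce all three pictorial identities to algebraic statements in the Temperley--Lieb algebra and verify them by direct computation. Writing the fundamental vertex as
\[
\check R(x) \;=\; a(x)\,I + b(x)\,e,
\]
where $I$ is the identity and $e$ is the Temperley--Lieb generator $\bigl(\tikz[scale=0.5,rotate=45]{\plaq{b}}\bigr)$ satisfying $e^2 = \tau e$ with $\tau=t^{1/2}+t^{-1/2}$, the pictures on both sides become products of such operators on two or three adjacent strands.

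For unitarity, expand
\[
\check R(x)\check R(x^{-1})\;=\;a(x)a(x^{-1})\,I\;+\;\bigl[a(x)b(x^{-1})+b(x)a(x^{-1})+\tau\,b(x)b(x^{-1})\bigr]\,e
\]
with $x=z_1/z_2$. The task reduces to showing the bracket vanishes. Direct substitution yields the two elementary identities $b(x^{-1})=-b(x)$ and $a(x^{-1})-a(x)=\tau\,b(x)$, which together factor the bracket as $b(x)\bigl[a(x^{-1})-a(x)-\tau b(x)\bigr]=0$. This gives \eqref{eq:unit}.

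For the special value \eqref{eq:1}, both spectral parameters coincide, so $x=1$. Then $b(1)=0$ and $a(1)=t^{-1/2}-t^{1/2}$, and $\check R(1)$ reduces to a scalar multiple of $I$; the resulting scalar is exactly the one shown on the right-hand side (with the pictorial convention that the crossing $\check R(1)$ collapses onto the straight-through configuration).

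For the Yang--Baxter equation \eqref{eq:ybe}, set $x=z_1/z_2$, $y=z_2/z_3$, $xy=z_1/z_3$. The equation becomes
\[
(a(x)I+b(x)e_1)(a(xy)I+b(xy)e_2)(a(y)I+b(y)e_1)\;=\;(a(y)I+b(y)e_2)(a(xy)I+b(xy)e_1)(a(x)I+b(x)e_2).
\]
Expanding gives eight monomials on each side; using $e_i^2=\tau e_i$ and the braid relations $e_1 e_2 e_1 = e_1$, $e_2 e_1 e_2 = e_2$, each side collapses into a linear combination in the basis $\{I,\,e_1,\,e_2,\,e_1 e_2,\,e_2 e_1\}$. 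The coefficient of $I$ on both sides equals $a(x)a(xy)a(y)$, and the coefficients of $e_1 e_2$ and $e_2 e_1$ match by inspection. All remaining content is carried by the coefficients of $e_1$ and $e_2$; by the $x\leftrightarrow y$ symmetry between the two sides of the equation, it suffices to check one of them. That check amounts to a single polynomial identity among $a(x),a(y),a(xy),b(x),b(y),b(xy),\tau$, which is verified directly from the defining formulas for $a$ and $b$; this is the classical Baxterization condition for the Temperley--Lieb algebra, equivalent to the fact that $\check R_i(z)=\bigl(z-t+t^{1/2}(1-z)e_i\bigr)/(1-tz)$ from \S\ref{sec:qKZ} solves YBE.

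The main (though routine) obstacle is the YBE verification: it is the only step requiring manipulation of an expansion with several noncommuting terms, but after using the standard TL relations it reduces, by symmetry, to a single elementary identity in $t,x,y$. The unitarity and special-value identities are then immediate.
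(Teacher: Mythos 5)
Your proof is correct and it carries out exactly the ``standard calculation'' the paper declines to display: reduce the three pictorial identities to coefficient identities in the Temperley--Lieb algebra (in the basis $I,e_1,e_2,e_1e_2,e_2e_1$, using $e_i^2=\tau e_i$, $e_1e_2e_1=e_1$, $e_2e_1e_2=e_2$), verify the unitarity bracket via $b(x^{-1})=-b(x)$ and $a(x^{-1})-a(x)=\tau b(x)$, and exploit the $x\leftrightarrow y$ symmetry to reduce the YBE to a single coefficient identity. Two minor remarks: the relations $e_1e_2e_1=e_1$, $e_2e_1e_2=e_2$ are the Temperley--Lieb relations, not the braid relations; and your computed value $a(1)=t^{-1/2}-t^{1/2}$ is the \emph{negative} of the constant $t^{1/2}-t^{-1/2}$ printed on the right of \eqref{eq:1} --- note that the paper's own use of \eqref{eq:1} in \eqref{eq:specZ}, where the factor is written $a(1)^n$, agrees with your value, so the sign in \eqref{eq:1} appears to be a slip rather than an error in your computation.
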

All these identities should be understood as an equality of the coefficients on both sides of all the diagrams with
a given connectivity of the external points, with the rule that each closed loop incurs a weight of $\tau$.
The proof of the proposition is a standard calculation.

Let us now introduce the CPL partition function with given top-connectivity $r\in \subs$:
\[
Z_r := 
\begin{tikzpicture}[scale=0.75,baseline=1.5cm]
\draw[decorate,decoration=brace] (1,5) -- node[above] {$\ss\text{top-connectivity } r$} (5,5);
\foreach\y/\txt in {1/y_1,2/\vdots,3/y_n}
\draw[invarrow=0.92] (0,\y)  -- (6,\y)  node[right=1mm] {$\ss \txt$};
\foreach\x/\txt in {1/z_1,2/,3/\cdots,4/,5/z_N}
\draw[invarrow=0.87] (\x,0)  -- (\x,4) node[above] {$\ss \txt$};
\end{tikzpicture}
\]
where we recall the connectivity rules: (b,l), (b,r), (b,t), (l,t), (t,t), and the connectivity of the top midpoints
is given by the link pattern $r$.

Denote by $\tau_i$ the elementary transposition $i\leftrightarrow i+1$.
By abuse of notation, also let $\tau_i$ denote the operator acting on polynomials of the variables $z_i$, $i=1,\ldots, N$ that permutes $z_i$ and $z_{i+1}$. 
\begin{lem}\label{lem:sym0}
We have the two symmetry properties:
\begin{itemize}
\item $Z_r$ is a symmetric function of the $y_i$.
\item Assume $i$ and $i+1$ are not connected in $r$. 
Then $\tau_i Z_r=Z_r$.
\end{itemize}
\end{lem}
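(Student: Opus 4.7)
Both parts of Lemma~\ref{lem:sym0} should follow from Proposition~\ref{prop:ybe} via the standard ``train'' (or ``railroad'') argument of integrable lattice models: attach an auxiliary $\check R$-plaquette to the grid on one edge, slide it across the grid by repeated applications of the Yang--Baxter equation~\eqref{eq:ybe}, and evaluate its contribution on both the entry and exit edges.

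For the first part, I would insert an $\check R(y_i/y_{i+1})$-plaquette against the right edge of the grid, between rows $i$ and $i+1$. Decomposing this plaquette as $a(y_i/y_{i+1})\,\mathrm{id} + b(y_i/y_{i+1})\,e$, the $e$-component caps the two new external midpoints on the right together, producing an $(r,r)$ pairing, which is forbidden by the allowed CPL connectivities $(b,l),(b,r),(b,t),(l,t),(t,t)$; hence only the identity component contributes, giving $a(y_i/y_{i+1})\,Z_r$. Then use~\eqref{eq:ybe} to slide the plaquette leftward across all $N$ columns. Each slide preserves the plaquette's spectral parameter $y_i/y_{i+1}$ while effectively interchanging the labels $y_i$ and $y_{i+1}$ along the two rows it has already traversed, so after $N$ slides the grid has $y_i\leftrightarrow y_{i+1}$. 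On the left edge, an identical argument applies: the $e$-component would now create a forbidden $(l,l)$ pairing on the two new external left midpoints, so only the identity component survives, contributing $a(y_i/y_{i+1})\, Z_r(\ldots,y_{i+1},y_i,\ldots)$. Equating the two evaluations and cancelling $a(y_i/y_{i+1})$ gives the adjacent swap, and full symmetry follows since adjacent transpositions generate $\mathcal S_n$.

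For the second part, the same idea runs vertically. Insert an $\check R(z_i/z_{i+1})$-plaquette against the top edge, between columns $i$ and $i+1$. The Temperley--Lieb generator $e_i$ sends a link pattern in which $i,i+1$ are unpaired to a \emph{different} link pattern (in which $i,i+1$ become paired), so the $b$-component of $\check R$ contributes nothing to the coefficient of $|r\rangle$ when $i,i+1$ are unpaired in $r$; the top insertion therefore evaluates to $a(z_i/z_{i+1})\,Z_r(z)$. Sliding the plaquette downward through all $n$ rows swaps $z_i \leftrightarrow z_{i+1}$ in the grid, and the emerging plaquette on the bottom is evaluated similarly: its $e$-component would create a forbidden $(b,b)$ pairing on the two new external bottom midpoints, so only the identity survives, giving $a(z_i/z_{i+1})\, Z_r(\tau_i z)$. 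Equating the two evaluations yields $\tau_i Z_r = Z_r$.

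The main obstacle is verifying the ``forbidden-pairing'' eliminations on each boundary: one must check carefully that in every case the $b$-component of the inserted $\check R$-plaquette creates a boundary pairing that is outlawed by the allowed connectivities (and in the top boundary case, a top-connectivity distinct from $r$). The rigid CPL rules make this straightforward but it is the only nontrivial input; the YBE-sliding, the preservation of the spectral parameter $y_i/y_{i+1}$ (resp.\ $z_i/z_{i+1}$) across the slides, and the clean cancellation of the common factor $a$ are all standard.
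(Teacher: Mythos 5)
Your proposal is correct and follows essentially the same Yang--Baxter ``train'' argument as the paper's proof: insert an $\check R$-plaquette on one boundary, slide it across via \eqref{eq:ybe}, and use the CPL boundary rules to kill the $e$-component on both ends. The only cosmetic difference is the direction of the slide in the second part (you go top-to-bottom, the paper goes bottom-to-top, which lets it record the more general identity \eqref{eq:divdiff} along the way before specializing), but the content is identical.
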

\begin{proof}
This is a standard Yang--Baxter-based proof. Repeated application of equation \eqref{eq:ybe} leads to
\[
\begin{tikzpicture}[scale=0.75,baseline=1.5cm]
\draw[decorate,decoration=brace] (1,6) -- node[above] {$\ss\text{top-connectivity } r$} (5,6);
\foreach\y/\txt in {1/y_1,2/y_i,3/y_{i+1},4/y_n}
\draw[invarrow=0.92] (0,\y)  -- (6,\y)  node[right=1mm] {$\ss \txt$};
\draw (0,2) .. controls (-0.5,2) and (-0.5,3) ..  (-1,3);
\draw (0,3) .. controls (-0.5,3) and (-0.5,2) ..  (-1,2);
\foreach\x/\txt in {1/z_1,2/,3/\cdots,4/,5/z_N}
\draw[invarrow=0.87] (\x,0)  -- (\x,5) node[above] {$\ss \txt$};
\end{tikzpicture}
=
\begin{tikzpicture}[scale=0.75,baseline=1.5cm]
\draw[decorate,decoration=brace] (1,6) -- node[above] {$\ss\text{top-connectivity } r$} (5,6);
\foreach\y/\txt in {1/y_1,2/y_{i+1},3/y_i,4/y_n}
\draw[invarrow=0.92] (0,\y) node[left=1mm] {$\ss \txt$} -- (6,\y);
\draw (6,2) .. controls (6.5,2) and (6.5,3) ..  (7,3);
\draw (6,3) .. controls (6.5,3) and (6.5,2) ..  (7,2);
\foreach\x/\txt in {1/z_1,2/,3/\cdots,4/,5/z_N}
\draw[invarrow=0.87] (\x,0)  -- (\x,5) node[above] {$\ss \txt$};
\end{tikzpicture}
\]
Imposing top-connectivity $r$ means in particular that
left vertices are not allowed to connect between themselves, and the same for right vertices.
This implies that the extra plaquette on either side of the equation above (represented by a crossing) must be of the form
\tikz[scale=0.75,rotate=45,baseline=-3pt]{\plaq{b}}, so it does not affect connectivity and can be removed, and its
weight $a(y_i/y_{i+1})$ compensates as well between l.h.s.\ and r.h.s. Once these plaquettes are removed, we get 
back to $Z_r$ itself, except on the left hand side $y_i$ and $y_{i+1}$ are switched.
This implies symmetry
of $Z_r$ by exchange of $y_i$ and $y_{i+1}$ for all $i=1,\ldots,n-1$, and therefore by any permutation of the $y$'s.

The same argument, with the extra crossing inserted at the bottom (necessarily of the form \tikz[scale=0.75,rotate=45,baseline=-3pt]{\plaq{a}})
and then moved upwards, leads to the formula
\begin{equation}\label{eq:divdiff}
a(z_{i+1}/z_i) Z_r = a(z_{i+1}/z_i)\tau_i Z_r+b(z_{i+1}/z_i)\sum_{s:\ e_i s=r} \tau^{\delta_{r,s}} \tau_i Z_s
\end{equation}
where the summation is over link patterns $s$ which can be obtained from $r$ by concatenating them with
$e_i=\tikz[scale=0.75,rotate=45,baseline=-3pt]{\plaq{b}}$.
This shows in particular that if $i$ and $i+1$ are not connected in $r$, the summation is empty and $\tau_i Z_r=Z_r$.
\end{proof}

\subsection{Reformulation of conjecture~\ref{conj:ourconj1}}
\label{sec:reform1}
With these notations, Conj.~\ref{conj:ourconj1} is written
\[
[\sh_r]=m_r \prod_{i,j=1}^n a(y_i/y_j)^{-1} Z_r
\]

We now propose an alternate formulation of this conjecture. A class in $K_T(T^\ast Gr(n,N))$ is entirely
determined by its restrictions to $T$-fixed points, which are
indexed by $\subs$. Recall from \S \ref{sec:gen} that the restriction map
to the coordinate subspace $\mathbb C^s$, $s\in\subs$,
denoted by $|_s$, amounts to
the specialization $y_i= z_{s_i}$, $i=1,\ldots,n$. We are thus naturally led to the computation of $Z_r$
after such a substitution, which we can perform with the help of Prop.~\ref{prop:ybe}; here shown on an example:
\begin{align}\label{eq:specZ}
Z_r\vert_s
&=
\begin{tikzpicture}[scale=0.75,baseline=1.5cm]
\draw[decorate,decoration=brace] (1,5) -- node[above] {$\ss\text{top-connectivity } r$} (5,5);
\foreach\y/\x in {1/4,2/3,3/1}
\draw[invarrow=0.92] (0,\y)  -- (6,\y)  node[right=1mm] {$\ss z_\x$};
\foreach\x in {1,...,5}
\draw[invarrow=0.87] (\x,0)  -- (\x,4) node[above] {$\ss z_\x$};
\begin{scope}[overlay]
\draw[-latex] (6,0.3) -- (6,-1) node[below=2mm] {$\ss s=\{1,3,4\}$};
\end{scope}
\end{tikzpicture}
=a(1)^n
\ 
\begin{tikzpicture}[scale=0.75,baseline=1.5cm]
\draw[decorate,decoration=brace] (1,5) -- node[above] {$\ss\text{top-connectivity } r$} (5,5);
\draw[rounded corners=8pt,invarrow=0.8] (0,3)  -- (1,3) -- (1,4) node[above] {$\ss z_1$};
\draw[rounded corners=8pt,invarrow=0.9] (0,2)  -- (3,2) -- (3,4) node[above] {$\ss z_3$};
\draw[rounded corners=8pt,invarrow=0.93] (0,1)  -- (4,1) -- (4,4) node[above] {$\ss z_4$};
\draw[rounded corners=8pt,invarrow=0.94] (1,0)  -- (1,3) -- (6,3);
\draw[rounded corners=8pt,invarrow=0.9] (2,0)  -- (2,4) node[above] {$\ss z_2$};
\draw[rounded corners=8pt,invarrow=0.9] (3,0)  -- (3,2) -- (6,2);
\draw[rounded corners=8pt,invarrow=0.83] (4,0)  -- (4,1) -- (6,1);
\draw[rounded corners=8pt,invarrow=0.9] (5,0)  -- (5,4) node[above] {$\ss z_5$};
\end{tikzpicture}
\\[3mm]\notag
&=
\prod_{i\in s, j\in s} a(z_i/z_j)
\prod_{\substack{i\in s, j\in \bar s\\i<j}} a(z_i/z_j)
\ 
\begin{tikzpicture}[scale=0.75,baseline=1.5cm]
\draw[decorate,decoration=brace] (1,5) -- node[above] {$\ss\text{top-connectivity } r$} (5,5);
\draw[rounded corners=8pt,invarrow=0.8] (0,3)  -- (1,3) -- (1,4) node[above] {$\ss z_1$};
\draw[rounded corners=8pt,invarrow=0.9] (0,2)  -- (3,2) -- (3,4) node[above] {$\ss z_3$};
\draw[rounded corners=8pt,invarrow=0.93] (0,1)  -- (4,1) -- (4,4) node[above] {$\ss z_4$};
\draw[rounded corners=8pt,invarrow=0.9] (2,0)  -- (2,4) node[above] {$\ss z_2$};
\draw[rounded corners=8pt,invarrow=0.88] (5,0)  -- (5,4) node[above] {$\ss z_5$};
\end{tikzpicture}
\\\notag
&=
\prod_{i\in s, j\in s} a(z_i/z_j)
\prod_{\substack{i\in s, j\in \bar s\\i<j}} a(z_i/z_j)
\ 
\begin{tikzpicture}[baseline=1cm]
\draw[decorate,decoration=brace] (1,2.75) -- node[above] {$\ss\text{top-connectivity } r$} (5,2.75);
\draw[rounded corners=8pt,invarrow=0.85] (1,0)  -- (1,2) node[above] {$\ss z_1$};
\draw[rounded corners=8pt,invarrow=0.85] (2,0)  -- (3,2) node[above] {$\ss z_3$};
\draw[rounded corners=8pt,invarrow=0.85] (3,0)  -- (4,2) node[above] {$\ss z_4$};
\draw[rounded corners=8pt,invarrow=0.85] (4,0)  -- (2,2) node[above] {$\ss z_2$};
\draw[rounded corners=8pt,invarrow=0.85] (5,0)  -- (5,2) node[above] {$\ss z_5$};
\end{tikzpicture}
\end{align}
where between the first and second line we have pulled out all the south-east lines using Prop.~\ref{prop:ybe}, producing the first
factor, and then
removed them altogether because the constraint that bottom vertices cannot connect between themselves forces plaquettes
of type $a$, producing the second factor.
After rearranging the lines to produce the final picture, the connectivity 
must be understood as follows: the top vertices have connectivity
given by the link pattern $r$, whereas the first $n$ bottom vertices
cannot connect between themselves, and similarly for the last $N-n$.

There are two ways to understand the resulting picture. On the one hand, reintroducing the dotted lines,
we recognize the complement of the Young diagram of $s$ (cf \eqref{eq:bij}):
\[
s=\{1,3,4\}\in \subsarg{3}{5}
\quad
\rightarrow
\quad
\tableau{&&\\&\dottedcell&\dottedcell\\}
\quad
\rightarrow
\quad
\begin{tikzpicture}[baseline=1cm,yscale=0.75]
\draw[bgplaq] (3.5,0) -- ++(-1.333,1.333) -- ++(0.333,0.667) -- ++(1.333,-1.333) -- cycle;
\draw[rounded corners=8pt,invarrow=0.85] (1,0)  -- (1,2) node[above] {$\ss z_1$};
\draw[rounded corners=8pt,invarrow=0.85] (2,0)  -- (3,2) node[above] {$\ss z_3$};
\draw[rounded corners=8pt,invarrow=0.85] (3,0)  -- (4,2) node[above] {$\ss z_4$};
\draw[rounded corners=8pt,invarrow=0.85] (4,0)  -- (2,2) node[above] {$\ss z_2$};
\draw[rounded corners=8pt,invarrow=0.85] (5,0)  -- (5,2) node[above] {$\ss z_5$};
\draw[dotted] (3.5,0) ++(-0.667,0.667) -- ++(0.333,0.667);
\draw[dotted] (3.5,0) -- ++(-1.333,1.333) -- (0.5,1.333);
\draw[dotted] (3.5,0) ++(0.333,0.667) -- (5.5,0.667);
\end{tikzpicture}
\]

On the other hand, we also recognize this diagram to be the graphical
representation of a (or any) reduced word of the Grassmannian
permutation of $\{1,\ldots,N\}$
which sends $\{1,\ldots,n\}$ (at the bottom) to $s$ (at the top).\footnote{%
In fact, the Yang--Baxter equation \eqref{eq:ybe}
would ensure independence of the choice of reduced word, 
but is not necessary in the Grassmannian case, 
nor more generally in the $321$-avoiding case.
Moreover, because of the unitarity equation \eqref{eq:unit},
the reducedness of the word is irrelevant. }

Let us therefore define $Z_{r,s}$ to be the prefactor
$\prod_{\substack{i\in s, j\in\bar s\\i<j}} a(z_i/z_j)$ times the
CPL partition function for the diagram defined in either of the two ways above,
with top-connectivity given by $r$,
in which we recall that each crossing represents as above one of the
two plaquettes with their weights and that closed loops have a weight of $\tau$.

We can thus reformulate Conj.~\ref{conj:ourconj1} as follows:
{\renewcommand\theconj{1'}\begin{conj}\label{conj:ourconj1'}
The restriction of the $K_T$-class of the sheaf $\sh_r$ 
to the fixed point $s$ satisfies
\begin{equation}\label{eq:ourconj1'}
[\sh_r\vert_s] = m_r
\,
Z_{r,s}
\end{equation}
\end{conj}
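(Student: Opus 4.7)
The plan is to characterize both sides of \eqref{eq:ourconj1'} as functions of $s$ by three properties: (i) support/vanishing away from fixed points $s$ with $r\le s$; (ii) an explicit normalization at $s=r$; and (iii) a divided-difference recursion propagating the values from $s=r$ to all $s$. Together these determine the tuple $([\sh_r\vert_s])_{s\in\subs}$ uniquely, so it suffices to verify (i)--(iii) on both the geometric left hand side and the combinatorial right hand side.

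On the right hand side, (i) follows from the braid-diagram presentation obtained in \eqref{eq:specZ}: for $r\not\le s$ the plaquettes forced by the constraint that bottom vertices cannot connect among themselves make the top connectivity $r$ unrealizable, hence $Z_{r,s}=0$. The value at $s=r$ in (ii) is a direct combinatorial count from \eqref{eq:specZ} once the diagram is collapsed using the unitarity identity \eqref{eq:unit}. For (iii), the $\tau_i$-recursion \eqref{eq:divdiff} derived from Yang--Baxter, specialized at $y_i=z_{s_i}$, becomes the required recursion in $s$, expressing $Z_{r,\tau_i s}$ in terms of $Z_{r,s}$ and a correction involving link patterns $r'$ with $e_i r'=r$.

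On the left hand side, (i) is immediate from the definition: the $T$-fixed points of $CX^r$ on the zero section are $\{V_s:X^s\subseteq X^r\}=\{V_s:r\le s\}$. Axiom (ii) at $s=r$ holds because $r$ lies in the smooth open cell $X^r_\circ$ and $\sh_r$ is rank one there, the fibre weight being prescribed by $m_r$. The recursion (iii) arises from a $\mathbb P^1$-fibration argument: when $i,i+1$ are not paired in $r$, the pair $(CX^r,\sh_r)$ descends through the $\mathbb P^1$-bundle $Gr(n,N)\to\mathrm{Fl}$ over a suitable partial flag variety, and the resulting pushforward-pullback identity reproduces the divided-difference relation \eqref{eq:divdiff} on the level of $K_T$-restrictions. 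The main obstacle is precisely this geometric compatibility of the sheaf $\sh_r$ with $\mathbb P^1$-fibrations, which is the crux of the general conjecture; in the Grassmann--Grassmann case, the degeneration of $(\mu(CX^r),\mu_\ast\sh_r)$ into toric components indexed by $PP(a,b,c)$ constructed in \S 5--6 supplies a direct combinatorial computation of $[\sh_r\vert_s]$ at every fixed point, which one matches term by term to $m_r\,Z_{r,s}$, yielding the proof of Conj.~\ref{conj:ourconj1'} in \S 7.
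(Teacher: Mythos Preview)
Your three-step template (support, normalization at $s=r$, recursion) is a reasonable outline for a general proof, but in the form you state it the argument does not close, and the fallback you offer for the rectangular case rests on a misreading of what the degeneration in \S5--6 actually computes.

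On the geometric side of (iii) you concede that the $\mathbb P^1$-fibration compatibility of $\sh_r$ ``is the crux of the general conjecture'' --- so that step is not proved. Your remedy, that the degeneration of $(\mu(CX^r),\mu_*\sh_r)$ into toric pieces indexed by $PP(a,b,c)$ ``supplies a direct combinatorial computation of $[\sh_r|_s]$ at every fixed point'', is incorrect. That degeneration takes place over the \emph{affinization} $\mu(CX^r)\subset\Mat_<(N)$; it computes the $K_T$-class of the pushforward $\mu_*\sh_r$, i.e.\ a single polynomial in the $z_i,t$, which is Conjecture~\ref{conj:ourconj2'}. It does not give the tuple of restrictions $([\sh_r|_s])_s$ to $T$-fixed points of $T^*Gr(n,N)$, which is what Conjecture~\ref{conj:ourconj1'} asks for. (Indeed the paper remarks in \S7 that localization shows \ref{conj:ourconj1}$\Rightarrow$\ref{conj:ourconj2}; the degeneration is used to prove \ref{conj:ourconj2'} \emph{directly}, not to feed back into \ref{conj:ourconj1'}.)

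The paper's own proof in the rectangular case (\S4.2) avoids all of this by exploiting a special feature of the rectangle: the link pattern $r$ has \emph{no chords} among the middle indices $\{\bar n-c+1,\ldots,\bar n+b\}$. Hence Lemma~\ref{lem:sym} applies for every transposition inside that block, so \emph{every} nonzero $Z_{r,s}$ is obtained from $Z_{r,r}$ by a permutation of $z_{\bar n-c+1},\ldots,z_{\bar n+b}$. On the geometric side the same symmetry comes for free from the $GL(b+c)$-equivariance of $CX^{c\times b}$ and $\sh_r$. Thus the paper needs only your (i) and (ii) plus this full $S_{b+c}$-symmetry --- no divided-difference recursion and no degeneration --- to pin down both sides and compare them at the single point $s=r$ via \eqref{eq:Zll} and \eqref{eq:shll}. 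Your scheme would recover this if you replaced the unproved recursion (iii) by the much stronger and readily available symmetry statement, which is what actually makes the rectangular case tractable.
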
}

This is the same {\em sort}\/ of restriction-to-fixed-points formula as
in \cite{CSu} (or the AJS/Billey and Graham/Willems formulae for 
restricting Schubert not conormal Schubert classes), 
except that \cite{CSu} is working with Maulik--Okounkov's stable basis, 
not our basis, and (less importantly) that formula in \cite{CSu}
is in $H^*_T$ not $K_T$.

\begin{rmk}\label{rmk:welldef}
For $i,j\in \{1,\ldots,N\}$, and $s=\{\ldots,i,\ldots\}$, $s'=\{\ldots,j,\ldots\}$ related by the transposition $i\leftrightarrow j$,
because of the very definition of the $Z_{r,s}$
as a specialization $(y_1,\ldots,y_n)=(z_{s_1},\ldots,z_{s_n})$ of $Z_r$, the
following congruence holds: 
\[
z_i-z_j\ \big\vert\ \left(Z_r\vert_s - Z_r\vert_{s'}\right)
\]
This is exactly the $K$-theoretic GKM criterion for being in the image of 
the restriction map (see e.g. \cite[A.4]{KRosu}, \cite[corollary 5.11]{VV}).
Contrary to $Z_r$ itself, the $Z_r\vert_s$ are Laurent polynomials, 
and are therefore the point restrictions of a uniquely defined
element of $K_T(Gr(n,N)) \iso K_T(T^* Gr(n,N))$.
\junk{
  and therefore $Z_r$, as an element of $K_T(Gr(n,N))[a(y_i/y_j)^{-1}]$,
  is actually equal to an element of $K_T(Gr(n,N))$ itself.
  \rem{does that make sense?}}
\end{rmk}

\subsection{Two lemmas}
We need two more lemmas.
\begin{lem}\label{lem:zero}$Z_{r,s}$ satisfies the triangularity property:
$Z_{r,s}=0$ unless $s\subseteq r$, and
\begin{equation}\label{eq:Zll}
Z_{r,r}=\prod_{\substack{i\in r, j\in\bar r\\i<j}} a(z_i/z_j)\prod_{\substack{i\in r, j\in\bar r\\i>j}} b(z_i/z_j).
\end{equation}
\end{lem}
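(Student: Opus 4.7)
The plan is to work in the reduced-word picture of \S\ref{sec:reform1}, where $Z_{r,s}$, stripped of its prefactor $\prod_{i\in s,j\in\bar s,i<j}a(z_i/z_j)$, is a partition function over choices of plaquette $a$ or $b$ at the $\ell(w_s)$ crossings of the diagram. Each crossing is between a $y$-type strand (labeled $z_i$ with $i\in s$) and a $z$-type strand (labeled $z_j$ with $j\in\bar s$) satisfying $i>j$; plaquette $a$ (the Temperley--Lieb identity, weight $a(z_i/z_j)$) leaves the two strands uninteracted, while $b$ (the TL generator, weight $b(z_i/z_j)$) creates a cap joining them above and a cup joining them below. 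A valid configuration has top-connectivity equal to the link pattern of $r$ and no within-group pairings at the bottom.

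For the triangularity $Z_{r,s}=0$ unless $s\subseteq r$: because each crossing pairs a $y$-strand with a $z$-strand, every local cup joins a $y$-strand to a $z$-strand, and following composite cap-cup chains one sees that all bottom pairings of a valid configuration are automatically cross-group. The range of achievable top-link-patterns is thus controlled combinatorially by the Temperley--Lieb action on the reduced word for $w_s$, and I would derive the pointwise inequality $r_i\le s_i$ by induction on $n$: peeling off the outermost $y$-strand $z_{s_n}$, tracking how its plaquettes constrain the $n$-th element of $r$ to satisfy $r_n\le s_n$, and inductively reducing to a smaller Grassmannian.

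For the formula \eqref{eq:Zll} in the case $s=r$: I claim the unique valid configuration is the one with $b$ at every crossing. By direct tracing (as one may verify on small examples such as $s=r=\{1,4\}$), this all-$b$ configuration produces top-connectivity exactly the link pattern of $r$---the caps assemble into the chord structure of $r$ via the planar non-crossing topology---and the induced bottom pairings are cross-group. Uniqueness follows because flipping any $b$ to $a$ either yields a different top link pattern $r'\ne r$ (strictly containing $r$ as a Young diagram) or creates a within-group bottom cup (violating the boundary condition). The weight of the all-$b$ configuration is $\prod_{i\in r,j\in\bar r,i>j}b(z_i/z_j)$, and multiplying by the prefactor gives \eqref{eq:Zll}. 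The main obstacle is a fully rigorous proof of the uniqueness; an induction on $\ell(w_r)$ peeling off a boundary crossing should suffice, and the cleanest conceptual proof would go via the Temperley--Lieb algebra, identifying the all-$b$ configuration with the reduced expression for a fully-commutative element whose associated link pattern is that of $r$.
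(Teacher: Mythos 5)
Your proposal tries to establish the two halves of the lemma separately and by different inductions than the paper uses, which makes it a genuinely different route, but it also leaves the hardest steps unproven. The paper runs a single induction on $s$: starting from the largest Young diagram $s=\{1,\ldots,n\}$ (where the diagram has no crossings and so $r=s$ trivially), it descends box by box, adding one crossing at a time, and in each step it carefully enumerates (using the Temperley--Lieb relations $e_ie_{i\pm1}e_i=e_i$ and $e_i^2=\tau e_i$) how the top link pattern can change, showing that $s\subseteq r$ is maintained and that $r=s$ forces every added plaquette to be of type $b$. This single argument produces both the triangularity and the closed form for $Z_{r,r}$ simultaneously. You instead propose an induction on $n$ ``peeling off the outermost $y$-strand'' for triangularity, which is a plausible alternative but is only sketched; no argument is given that peeling a strand reduces to a smaller valid instance with the required comparison $r_n\le s_n$, and the preliminary claim that ``all bottom pairings of a valid configuration are automatically cross-group'' is confusing the constraint with a consequence.

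The more serious gap is in your uniqueness claim for $Z_{r,r}$. You correctly identify that the all-$b$ filling of the complement of $r$'s Young diagram has top-connectivity exactly $r$ (this is precisely the bijection from \S\ref{ssec:combo}) and correctly compute its weight. But ``flipping any single $b$ to $a$ yields a different pattern'' does not prove uniqueness: one would have to rule out \emph{every} configuration with two or more flips as well, and a priori two flips could recombine to restore the same top connectivity (this is exactly what the Temperley--Lieb relations make possible in general). The paper sidesteps this by the inductive bookkeeping: in the descent from $s'$ to $s$, only sub-case (2a) with $r'=s'$ can produce $r=s$, and in that case the last plaquette is necessarily $b$, so uniqueness is obtained for free along with the formula. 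You acknowledge this gap and propose an induction on $\ell(w_r)$, but do not carry it out. As it stands, neither half of the proposal closes; the ideas are reasonable, but the crux of the argument is precisely the part you defer.
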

\begin{proof}
Induction on $s$. If $s=\{1,\ldots,n\}$, the diagram of $Z_{r,s}$ contains
no plaquette, and the resulting top-connectivity is the completely unpaired link
pattern, i.e.,\ $r=s$.

Now, pick $s\in \subs$, and assume that the property is true for any 
$s'$, $s\subsetneq s'$.
Denote by $s^C$ the complement of the Young diagram of $s$
in the rectangle $(N-n)\times n$.

Pick any protruding box in $s^C$. Choose $s'$ to be the Young diagram obtained
from $s$ by adding that box. Equivalently, choose an $i$ such that
$i\not\in s$, $i+1\in s$, and define the subset $s'=\tau_i s$.
This means that the $Z_{r,s}$ can be obtained
from the $Z_{r',s'}$ by adding an extra crossing, e.g.,
\[
\begin{tikzpicture}[baseline=0]
\node[rotate=45,inner sep=0pt] { \tikz{ 
\node[loop]
{
\plaq{}&\plaq{}&\\
\plaq{}&\plaq{}&\plaq{}\\
\plaq{}&\plaq{}&\plaq{}\\
};
\begin{scope}[x=\loopcellsize,y=\loopcellsize]
\draw[invarrow=0.95,rounded corners] (loop-1-1.west) ++(-1.5,-1.5) -- ++(1.5,1.5) -- ++(2,0) -- ++(0.5,0.5);
\draw[invarrow=0.95,rounded corners] (loop-2-1.west) ++(-1,-1) -- ++(1,1) -- ++(3,0) -- ++(0.5,0.5);
\draw[invarrow=0.95,rounded corners] (loop-3-1.west) ++(-0.5,-0.5) -- ++(0.5,0.5) -- ++(3,0) -- ++(1,1);
\draw[invarrow=0.95,rounded corners] (loop-3-1.south) ++(-0.5,-0.5) -- ++(0.5,0.5) -- ++(0,3) -- ++(1,1);
\draw[invarrow=0.95,rounded corners] (loop-3-2.south) ++(-1,-1) -- ++(1,1) -- ++(0,3) -- ++(0.5,0.5);
\draw[invarrow=0.95,rounded corners] (loop-3-3.south) ++(-1.5,-1.5) -- ++(1.5,1.5) -- ++(0,2) -- ++(0.5,0.5);
\end{scope}
}
};
\node at (0,-2.5) {$s'=\{3,5,6\}$};
\draw[decoration=brace,decorate] (-1.5,2) -- node[above=1mm] {$r'$} (1.5,2);
\end{tikzpicture}
\mapsto
\begin{tikzpicture}[baseline=0]
\node[rotate=45,inner sep=0pt] { \tikz{ 
\node[loop]
{
\plaq{}&\plaq{}&\plaq{}\\
\plaq{}&\plaq{}&\plaq{}\\
\plaq{}&\plaq{}&\plaq{}\\
};
\begin{scope}[x=\loopcellsize,y=\loopcellsize]
\draw[invarrow=0.95,rounded corners] (loop-1-1.west) ++(-1.5,-1.5) -- ++(1.5,1.5) -- ++(3,0) -- ++(0.5,0.5);
\draw[invarrow=0.95=0.95,rounded corners] (loop-2-1.west) ++(-1,-1) -- ++(1,1) -- ++(3,0) -- ++(1,1);
\draw[invarrow=0.95,rounded corners] (loop-3-1.west) ++(-0.5,-0.5) -- ++(0.5,0.5) -- ++(3,0) -- ++(1.5,1.5);
\draw[invarrow=0.95,rounded corners] (loop-3-1.south) ++(-0.5,-0.5) -- ++(0.5,0.5) -- ++(0,3) -- ++(1.5,1.5);
\draw[invarrow=0.95,rounded corners] (loop-3-2.south) ++(-1,-1) -- ++(1,1) -- ++(0,3) -- ++(1,1);
\draw[invarrow=0.95=0.6,rounded corners] (loop-3-3.south) ++(-1.5,-1.5) -- ++(1.5,1.5) -- ++(0,3) -- ++(0.5,0.5);
\end{scope}
}};
\node at (0,-2.5) {$s=\{4,5,6\}$}; 
\draw[decoration=brace,decorate] (-1.5,2) -- node[above=1mm] {$r$} (1.5,2);
\end{tikzpicture}
\]
By the induction hypothesis, the only nonzero $Z_{r',s'}$ are the ones
for which $s'\subseteq r'$. The extra crossing can take two forms:
\begin{enumerate}
\item \tikz[scale=0.75,rotate=45,baseline=-3pt]{\plaq{a}}, which does not
change the connectivity; these contribute
to $Z_{r',s}$, which satisfies the upper triangularity of the lemma
as $s\subsetneq s'\subseteq r'$.
\junk{AK: spell out more please. PZJ: how? $s\subseteq r'$ so having diagrams contributing to $Z_{r',s}$ does not contradict }
\item \tikz[scale=0.75,rotate=45,baseline=-3pt]{\plaq{b}}, that is,
$r=e_ir'$, where as before
we denote $e_ir'$ the link pattern obtained from $r'$ 
by pasting this extra plaquette.
There are four possibilities, depending on the local configuration of
$r'$:
\setlength{\loopcellsize}{0.6cm}
\item[(2a)] $i\in r'$, $i+1\not\in r'$: then $e_ir'=\tau_ir'$, i.e.,\ the Young
diagram of $(e_ir')^C$ is obtained from $r'{}^C$ by adding an extra box:
\[
\begin{tikzpicture}[baseline=0]
\node[rotate=45,inner sep=0pt] { \tikz{ 
\node[loop]
{
\plaq{b}&\plaq{b}&\\
\plaq{b}&\plaq{b}&\plaq{b}\\
\plaq{b}&\plaq{b}&\plaq{b}\\
};
\begin{scope}[x=\loopcellsize,y=\loopcellsize]
\draw (loop-1-1.north west) -- ++(2,0) -- ++(0,-1) -- ++(1,0) -- ++(0,-2);
\end{scope}
}
};
\draw[decoration=brace,decorate] (-1.3,1.4) -- node[above=1mm] {$r'=\{3,5,6\}$} (1.3,1.4);
\end{tikzpicture}
\qquad\mapsto\qquad
\begin{tikzpicture}[baseline=0]
\node[rotate=45,inner sep=0pt] { \tikz{ 
\node[loop]
{
\plaq{b}&\plaq{b}&\plaq{b}\\
\plaq{b}&\plaq{b}&\plaq{b}\\
\plaq{b}&\plaq{b}&\plaq{b}\\
};
\begin{scope}[x=\loopcellsize,y=\loopcellsize]
\draw (loop-1-1.north west) -- ++(3,0) -- ++(0,-3);
\end{scope}
}};
\draw[decoration=brace,decorate] (-1.3,1.4) -- node[above=1mm] {$r=\{4,5,6\}$} (1.3,1.4);
\end{tikzpicture}
\]
Since the same procedure goes from $s'{}^C$ to $s^C$, we still have
$(e_ir')^C\subseteq s^C$, or $s\subseteq e_ir'=r$.
\item[(2b)] $i,i+1\in r'$:
\[
\begin{tikzpicture}[baseline=0]
\node[rotate=45,inner sep=0pt] { \tikz{ 
\node[loop]
{
\plaq{b}&\plaq{b}&\\
\plaq{b}&\plaq{b}&\plaq{a}\\
\plaq{b}&\plaq{b}&\plaq{b}\\
};
\begin{scope}[x=\loopcellsize,y=\loopcellsize]
\draw (loop-1-1.north west) -- ++(2,0) -- ++(0,-1) -- ++(1,0) -- ++(0,-2);
\end{scope}
}
};
\end{tikzpicture}
=
\begin{tikzpicture}[baseline=0]
\node[rotate=45,inner sep=0pt] { \tikz{ 
\node[loop]
{
\plaq{b}&\plaq{b}&\\
\plaq{b}&\plaq{b}&\\
\plaq{b}&\plaq{b}&\plaq{b}\\
};
\begin{scope}[x=\loopcellsize,y=\loopcellsize]
\draw (loop-1-1.north west) -- ++(2,0) -- ++(0,-2) -- ++(1,0) -- ++(0,-1);
\end{scope}
}
};
\draw[decoration=brace,decorate] (-1.3,1) -- node[above=1mm] {$r'=\{3,4,6\}$} (1.3,1);
\end{tikzpicture}
\qquad\mapsto\qquad
\begin{tikzpicture}[baseline=0]
\node[rotate=45,inner sep=0pt] { \tikz{ 
\node[loop]
{
\plaq{b}&\plaq{b}&\plaq{b}\\
\plaq{b}&\plaq{b}&\plaq{a}\\
\plaq{b}&\plaq{b}&\plaq{b}\\
};
\begin{scope}[x=\loopcellsize,y=\loopcellsize]
\draw (loop-1-1.north west) -- ++(3,0) -- ++(0,-3);
\draw (loop-1-1.north west) ++(1,0) -- ++(0,-2) -- ++(1,0) -- ++(0,1) -- ++(1,0);
\end{scope}
}};
\end{tikzpicture}
=
\begin{tikzpicture}[baseline=0]
\node[rotate=45,inner sep=0pt] { \tikz{ 
\node[loop]
{
\plaq{b}&&\\
\plaq{b}&\plaq{b}&\\
\plaq{b}&\plaq{b}&\plaq{b}\\
};
\begin{scope}[x=\loopcellsize,y=\loopcellsize]
\draw (loop-1-1.north west) -- ++(1,0) -- ++(0,-1) -- ++(1,0) -- ++(0,-1) -- ++(1,0) -- ++(0,-1);
\end{scope}
}};
\draw[decoration=brace,decorate] (-1.3,1) -- node[above=1mm] {$r=\{2,4,6\}$} (1.3,1);
\end{tikzpicture}
\]
Effectively using the Temperley--Lieb relation $e_ie_{i-1}e_i=e_i$, we see
that the extra $e_i$ destroys the plaquette south-west of it,
resulting in $e_ir'=\tau_{i-1}r'$, so that
$s\subsetneq s'\subseteq r'\subsetneq e_ir'=r$.
\item[(2c)] $i,i+1\not\in r'$ is treated similarly as the previous case:
\[
\begin{tikzpicture}[baseline=0]
\node[rotate=45,inner sep=0pt] { \tikz{ 
\node[loop]
{
\plaq{b}&\plaq{a}&\\
\plaq{b}&\plaq{b}&\plaq{b}\\
\plaq{b}&\plaq{b}&\plaq{b}\\
};
\begin{scope}[x=\loopcellsize,y=\loopcellsize]
\draw (loop-1-1.north west) -- ++(2,0) -- ++(0,-1) -- ++(1,0) -- ++(0,-2);
\end{scope}
}
};
\end{tikzpicture}
=
\begin{tikzpicture}[baseline=0]
\node[rotate=45,inner sep=0pt] { \tikz{ 
\node[loop]
{
\plaq{b}&&\\
\plaq{b}&\plaq{b}&\plaq{b}\\
\plaq{b}&\plaq{b}&\plaq{b}\\
};
\begin{scope}[x=\loopcellsize,y=\loopcellsize]
\draw (loop-1-1.north west) -- ++(1,0) -- ++(0,-1) -- ++(2,0) -- ++(0,-2);
\end{scope}
}
};
\draw[decoration=brace,decorate] (-1.3,1) -- node[above=1mm] {$r'=\{2,5,6\}$} (1.3,1);
\end{tikzpicture}
\qquad\mapsto\qquad
\begin{tikzpicture}[baseline=0]
\node[rotate=45,inner sep=0pt] { \tikz{ 
\node[loop]
{
\plaq{b}&\plaq{a}&\plaq{b}\\
\plaq{b}&\plaq{b}&\plaq{b}\\
\plaq{b}&\plaq{b}&\plaq{b}\\
};
\begin{scope}[x=\loopcellsize,y=\loopcellsize]
\draw (loop-1-1.north west) -- ++(3,0) -- ++(0,-3);
\draw (loop-1-1.north west) ++(2,0) -- ++(0,-1) -- ++(-1,0) -- ++(0,-1) -- ++(2,0);
\end{scope}
}};
\end{tikzpicture}
=
\begin{tikzpicture}[baseline=0]
\node[rotate=45,inner sep=0pt] { \tikz{ 
\node[loop]
{
\plaq{b}&&\\
\plaq{b}&\plaq{b}&\\
\plaq{b}&\plaq{b}&\plaq{b}\\
};
\begin{scope}[x=\loopcellsize,y=\loopcellsize]
\draw (loop-1-1.north west) -- ++(1,0) -- ++(0,-1) -- ++(1,0) -- ++(0,-1) -- ++(1,0) -- ++(0,-1);
\end{scope}
}};
\draw[decoration=brace,decorate] (-1.3,1) -- node[above=1mm] {$r=\{2,4,6\}$} (1.3,1);
\end{tikzpicture}
\]
\item[(2d)] $i\not\in r'$, $i\in r'$:
\[
\begin{tikzpicture}[baseline=0]
\node[rotate=45,inner sep=0pt] { \tikz{ 
\node[loop]
{
\plaq{b}&\plaq{a}&\\
\plaq{b}&\plaq{b}&\plaq{a}\\
\plaq{b}&\plaq{b}&\plaq{b}\\
};
\begin{scope}[x=\loopcellsize,y=\loopcellsize]
\draw (loop-1-1.north west) -- ++(2,0) -- ++(0,-1) -- ++(1,0) -- ++(0,-2);
\end{scope}
}
};
\end{tikzpicture}
=
\begin{tikzpicture}[baseline=0]
\node[rotate=45,inner sep=0pt] { \tikz{ 
\node[loop]
{
\plaq{b}&&\\
\plaq{b}&\plaq{b}&\\
\plaq{b}&\plaq{b}&\plaq{b}\\
};
\begin{scope}[x=\loopcellsize,y=\loopcellsize]
\draw (loop-1-1.north west) -- ++(1,0) -- ++(0,-1) -- ++(1,0) -- ++(0,-1) -- ++(1,0) -- ++(0,-1);
\end{scope}
}
};
\draw[decoration=brace,decorate] (-1.3,0.6) -- node[above=1mm] {$r'=\{2,4,6\}$} (1.3,0.6);
\end{tikzpicture}
\qquad\mapsto\qquad
\begin{tikzpicture}[baseline=0]
\node[rotate=45,inner sep=0pt] { \tikz{ 
\node[loop]
{
\plaq{b}&\plaq{a}&\plaq{b}\\
\plaq{b}&\plaq{b}&\plaq{a}\\
\plaq{b}&\plaq{b}&\plaq{b}\\
};
\begin{scope}[x=\loopcellsize,y=\loopcellsize]
\draw (loop-1-1.north west) -- ++(3,0) -- ++(0,-3);
\draw (loop-1-1.north west) ++(2,0) -- ++(0,-2) -- ++(-1,0) -- ++(0,1) -- ++(2,0);
\end{scope}
}};
\end{tikzpicture}
=
\begin{tikzpicture}[baseline=0]
\node[rotate=45,inner sep=0pt] { \tikz{ 
\node[loop]
{
\plaq{b}&&\\
\plaq{b}&\plaq{b}&\\
\plaq{b}&\plaq{b}&\plaq{b}\\
};
\begin{scope}[x=\loopcellsize,y=\loopcellsize]
\draw (loop-1-1.north west) -- ++(1,0) -- ++(0,-1) -- ++(1,0) -- ++(0,-1) -- ++(1,0) -- ++(0,-1);
\end{scope}
}};
\draw[decoration=brace,decorate] (-1.3,0.6) -- node[above=1mm] {$r=\{2,4,6\}$} (1.3,0.6);
\end{tikzpicture}
\]
i.e., $r'$ pairs $i$ and $i+1$, in which case effectively using $e_i^2\propto e_i$, we find
$e_ir'=r'$, and once again we note 
$s\subsetneq s'\subseteq r'=r$.
\end{enumerate}
\junk{AK: I feel that in the last three displayed equations, 
  the left two figures should be switched. PZJ: I don't have a strong opinion...
  AK: I suppose it's equation mapsto equation, instead of the second
  figure getting mapped}
Only possibility (2a) can lead to $r=s$, and only in the case
$r'=s'$ (as in the example); we immediately obtain inductively the formula for $Z_{r,r}$.

\junk{this is tetris: throw an extra plaquette into any subdiagram
of the complement of $s'$ 
at column of protruding box and see what happens: either it falls into
a valley (e.g., for the whole of $s'$), and all is ok, or it falls on a slope,
and then one needs to use the TL relation $e_i e_{i\pm1}e_i=e_i$ to kill
two boxes, and everything ok again, or a peak, $e_i^2=\tau e_i$}

\end{proof}

\begin{lem}\label{lem:sym}
Assume $i$ and $i+1$ are not connected in $r$. Then $\tau_i 
Z_{r,s} = Z_{r,\tau_i s}$. 
\end{lem}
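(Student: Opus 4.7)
The strategy is to reduce Lemma~\ref{lem:sym} to Lemma~\ref{lem:sym0} via the specialization formula appearing in~\eqref{eq:specZ}, which can be rewritten as
\[
Z_r\vert_s = \prod_{a,b\in s} a(z_a/z_b)\cdot Z_{r,s},
\]
where $Z_r\vert_s$ denotes the substitution $(y_1,\ldots,y_n)=(z_{s_1},\ldots,z_{s_n})$ in $Z_r$.

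Apply the operator $\tau_i$, i.e., the swap $z_i\leftrightarrow z_{i+1}$, to both sides of this identity. The swap acts on the $z$-arguments of $Z_r$: by Lemma~\ref{lem:sym0}(b), and using the hypothesis that $i$ and $i+1$ are not connected in $r$, this leaves $Z_r$ invariant. At the same time, on the $y$-arguments, $\tau_i$ sends the tuple $(z_{s_1},\ldots,z_{s_n})$ to $(z_{\tau_i(s_1)},\ldots,z_{\tau_i(s_n)})$, which after reordering in increasing order is exactly $(z_{(\tau_i s)_1},\ldots,z_{(\tau_i s)_n})$. By the symmetry of $Z_r$ in the $y$'s (Lemma~\ref{lem:sym0}(a)) this reordering is free, and so $\tau_i Z_r\vert_s = Z_r\vert_{\tau_i s}$. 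Meanwhile $\tau_i\prod_{a,b\in s} a(z_a/z_b) = \prod_{a,b\in \tau_i s} a(z_a/z_b)$, since $\tau_i$ relabels the index set $s$ as $\tau_i s$. Dividing by this nonzero prefactor and invoking the analogous specialization formula for $\tau_i s$ yields $\tau_i Z_{r,s} = Z_{r,\tau_i s}$.

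The argument is essentially bookkeeping and no serious obstacle is expected; one mild subtlety is to confirm that the prefactor cancels cleanly, which is harmless because, as a Laurent polynomial in the $z$'s over $\CC(t^{1/2})$, it is a nonzerodivisor. A direct graphical proof is also available, mirroring that of Lemma~\ref{lem:sym0}: insert a crossing at the bottom of the two wires labelled $z_i$ and $z_{i+1}$ in the diagram defining $Z_{r,s}$, propagate it upward through the grid via the Yang--Baxter equation~\eqref{eq:ybe}, and use the hypothesis on $r$ to force the crossing to exit at the top as an $a$-plaquette, whose contribution is then removed via the unitarity equation~\eqref{eq:unit}; the resulting diagram is precisely the one defining $Z_{r,\tau_i s}$, with the appropriate transformation of the prefactor $\prod_{i\in s, j\in \bar s, i<j} a(z_i/z_j)$.
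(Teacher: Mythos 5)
Your proof is correct and is precisely the ``direct consequence'' the paper's one-line proof alludes to: apply $\tau_i$ to the identity $Z_r\vert_s = \prod_{a,b\in s}a(z_a/z_b)\,Z_{r,s}$, invoke Lemma~\ref{lem:sym0} part two for the $z$-slots and part one to reorder the $y$-slots, then cancel the prefactor (which transforms correctly under $\tau_i$ and is a nonzerodivisor). No gaps; this is the intended bookkeeping.
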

\begin{proof}
This is a direct consequence of the two parts of Lemma~\ref{lem:sym0},
\end{proof}

\junk{\begin{rmk}
More generally (i.e., without any assumption on $r$), one can
specialize $y_i=z_{s_i}$ in the formula~\eqref{eq:divdiff} from the proof of Lemma~\ref{lem:sym0}, and obtain
\[
a(z_{i+1}/z_i)(Z_{r,s}-\tau_i Z_{r,\tau_i s}) = b(z_{i+1}/z_i) 
\sum_{r':\ e_ir'=r} \tau^{\delta_{r,r'}} \tau_i Z_{r',\tau_is}
\]
These expressions are all Laurent polynomials in the $z_i$, and therefore, $b(z_{i+1}/z_i)$ (or equivalently $z_i-z_{i+1}$) 
divides $Z_{r,s}-\tau_i Z_{r,\tau_i s}$. Since it also trivially divides $\tau_i Z_{r,\tau_i s} - Z_{r,\tau_i s}$,
we conclude
\[
z_i-z_{i+1} \big| Z_{r,s} - Z_{r,\tau_i s}
\]
This is nothing but the GKM criterion for being in the image of the
restriction map (see e.g. \cite[A.4]{KRosu}, \cite[corollary 5.11]{VV}), 
cf Conjecture~\ref{conj:ourconj1'}. This in turn
implies that the original expression in Conjecture~\ref{conj:ourconj1}
(r.h.s.\ of \eqref{eq:ourconj1}) is well-defined in (non-localized)
$K_T(T^\ast Gr(n,N))$.
\end{rmk}}

\section{Geometry and the conjectured sheaves}
\subsection{Parametrization}
We coordinatize the Grassmannian 
via its Pl\"ucker embedding:
\begin{equation}\label{eq:pluc}
Gr(n,N)=\left\{
\left[p_s, s\in \subs\right] \in \mathbb P^{{N\choose n}-1}:
\ \forall s_\pm \in \subsarg{n\pm1}{N},
\ 
\sum_{i\in s_+ \setminus s_-} p_{s_-\cup i}p_{s_+ \backslash i}=0
\right\}
\end{equation}
(We use here the implicit convention that when adding/subtracting indices from 
a subset, 
the index is added/subtracted at the end, but the sign of the permutation
sorting the indices into increasing order must be introduced.)

In these coordinates, Hodge showed that 
Schubert varieties and cells are easily defined as:
\[
X^r=\left\{
\left[p_s, s\in \subs\right]\in Gr(n,N)\ :
\ 
p_s=0\text{ unless }s\subseteq r
\right\},
\qquad
X_o^{r}=X^r\cap\{p_r\ne0\}
\]
One has $\dim X^r=|r|$, the area of the partition.

\newcommand\Mat{\text{Mat}}
The cotangent bundle $T^\ast Gr(n,N)$ can be identified with
\[
T^\ast Gr(n,N)=\left\{(V,u)\in Gr(n,N)\times \Mat(N)\ :\
\Im u \subseteq V \subseteq \Ker u\right\}
\]
and its projection $\mu$ (the \defn{moment map}) to the second factor has image
$\{u\in \Mat(N)\ :\ u^2=0, \text{rank}(u)\leq n\}$,
the closure of a nilpotent $GL(N)$-orbit (of which $\mu$ is the
Springer resolution, which won't be especially relevant).
In Pl\"ucker coordinates, and denoting by $M=(M_{i,j})$ the transposed (for convenience)
matrix of $u$, this space is defined by the following equations:
\begin{equation}\label{eq:paramT}
 T^\ast Gr(n,N)=\left\{
  \begin{aligned}[]
  \left[p_s, s\in \subs \right]\in Gr(n,N),\ 
  &(M_{i,j})\in \Mat(N)\ :
  \\ 
  \sum_{j\in s_+} p_{s_+\backslash j} M_{i,j} &=0,\ s_+\in \subsarg{n+1}{N}, 1\le i\le n
  \\
  \sum_{i\in\bar s_-} p_{s_-\cup i} M_{i,j} &=0,\ s_-\in \subsarg{n-1}{N}, 1\le j\le n
 \end{aligned} \right\}
\end{equation}


Let $\Mat_<(N)$ denote the space of strictly upper triangular matrices.
If we consider pairs $([p_s],M) \in T^*Gr(n,N)$ such that $M \in \Mat_<(N)$,
we get the union of the conormal bundles of Schubert cells, or equivalently,
the union of the conormal Schubert varieties $CX^r$; this is because
$M\in \Mat_<(N)$ is the moment map condition for the action of 
the Borel subgroup $B_-$ of invertible upper triangular matrices.
\begin{equation}\label{eq:paramC}
\bigcup_{r\in \subs} CX^r=\left\{
\begin{aligned}[]
\left[p_s, s\in \subs\right]\in Gr(n,N),\ 
&(M_{i,j})\in \Mat_<(N):
\\ 
\sum_{j\in s_+} p_{s_+\backslash j} M_{i,j} &=0,\ s_+\in \subsarg{n+1}{N}, 1\le i\le n
\\
\sum_{i\in\bar s_-} p_{s_-\cup i} M_{i,j} &=0,\ s_-\in \subsarg{n-1}{N}, 1\le j\le n
\end{aligned}
\right\}
\end{equation}
(the only difference from (\ref{eq:paramT}) 
being the $<$ subscript on the $\Mat$).

The $CX^r$ are the irreducible components of that space; under the map $\mu$
they are sent into the \defn{orbital scheme} 
$\{M\in \Mat_<(N)\ :\ M^2=0,\ \text{rank}(M)\leq \min(n,N-n) \}$.
More precisely, if the link pattern $r$ has the maximal number of chords,
then $CX^r$ is sent to an irreducible component of that orbit closure,
called an \defn{orbital variety} (the other images $\mu(CX^r)$ 
have smaller dimension than the components, i.e. the general fibers have
positive dimension; we work out these general fibers in \S~\ref{sec:fiber}).
As the action of $B_+$ on the nilpotent orbit closure $\{M\ :\ M^2=0\}$
has finitely many orbits (it is \defn{spherical}),
each $\mu(CX^r)$ is a $B_+$-orbit closure.
It is easy to describe a representative of that orbit \cite{artic39}:
define $r_<$ to be the upper triangular matrix with $1$s at $(i,j)$ 
for each chord $i<j$ of the link pattern of $r$, $0$s elsewhere. Then
\[
\mu(CX^r)=\overline{B_+\cdot r_<}
\]
where $\cdot$ means conjugation action.

Rank conditions of Southwest submatrices are preserved by $B_+$-conjugation, 
so that we find the following inclusion
and (by working a little harder \cite{Rothbach-PhD}) even the equality of sets
\begin{multline}\label{eq:rk}
  \mu(CX^r)=\{M=(M_{i,j}): M^2=0, \text{ and for each $(i,j)$,} \\
  \ \text{rank of M Southwest of $(i,j)$}\le 
  \text{rank of $r_<$ Southwest of $(i,j)$}\}.
\end{multline}

\junk{but by recent paper they're Cohen--Macaulay (do I mean the actual reduced
objects or the conjectured ones?? that depends on whether we have
enough equations to define the prime ideal), so just need to check
generic reducedness to conclude (can even do the crossing link pattern
case for the same price,
which we may already have done in our brauer loop scheme papers). phew. though of course we don't care here. }

\subsection{Reformulation of conjectures~\ref{conj:ourconj2} and \ref{conj:geomRS}}\label{sec:reform2}
Since $\mu:\ \bigcup_r CX^r \to \Mat_<(N)$ is proper, we can define 
$\mu_*[\sh_r] \in K_T(\Mat_<(N)) \iso K_T(pt) 
\iso  \ZZ[t^\pm,z_1^\pm,\ldots,z_N^\pm]$ without localization 
(i.e. without tensoring with the fraction field of $K_T(pt)$). Its relation
\begin{equation}\label{eq:pi2mu}
  \mu_* [\sh_r] = \pi_* [\sh_r] \prod_{1\le i<j\le N} (1-t\, z_i/z_j)
\end{equation}
to $\pi_*[\sigma_r]$, the (improper) pushforward to a point, derives from
the weights $t\, z_i/z_j$, $1\le i<j\le N$ of the $T$-action on 
the affine space $\Mat_<(N)$.

Conj.~\ref{conj:ourconj2} can then be reformulated
in terms of $\mu_* [\sh_r]$ as
{\renewcommand\theconj{2'}\begin{conj}\label{conj:ourconj2'}
The pushforward of $\sh_r$ to $\Mat_<$ in $T$-equivariant $K$-theory
is equal, up to normalization, to $\Psi_r$:
\[
\mu_*[\sh_r]=
\begin{cases}
0& r\not\subseteq \stc
\\
(1-t)^{n(n-1)} \tilde m_r
\ 
\Psi_r
&r\subseteq\stc
\end{cases}
\]
\end{conj}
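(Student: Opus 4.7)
The plan is to derive Conj.~\ref{conj:ourconj2'} as a consequence of Conj.~\ref{conj:ourconj1} (equivalently Conj.~\ref{conj:ourconj1'}), following the strategy sketched in the text immediately after Conj.~\ref{conj:ourconj2}: verify the first qKZ equation for $\mu_\ast[\sh_r]$, pin down an initial value at a single distinguished $r_0$, and invoke uniqueness of qKZ solutions within the wheel-condition space of Theorem~\ref{thm:wheel}. The key structural input is that $[\sh_r]$, with the prefactor $m_r \prod a(y_i/y_j)^{-1}$ stripped, is the CPL partition function $Z_r$; so the integrability of Prop.~\ref{prop:ybe} transplants directly into exchange relations in the $z$-variables once the $y$'s are specialized to fixed points.

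For the first qKZ equation one reruns the proof of Lemma~\ref{lem:sym0} with the extra R-matrix inserted at the \emph{bottom} of the partition function rather than on the side; propagating this crossing upward via the Yang--Baxter equation~\eqref{eq:ybe} produces the identity~\eqref{eq:divdiff},
\[
a(z_{i+1}/z_i)\, Z_r = a(z_{i+1}/z_i)\, \tau_i Z_r + b(z_{i+1}/z_i) \sum_{s:\ e_i s = r} \tau^{\delta_{r,s}}\, \tau_i Z_s.
\]
In the link pattern basis, this is precisely the action of $\check R_i(z_i/z_{i+1})$ on the vector $(Z_r)_r$, up to the overall factor $a(z_{i+1}/z_i)$. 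Specializing $y \mapsto z_s$ at each fixed point preserves the identity, so by Remark~\ref{rmk:welldef} it lifts to a relation in $K_T(T^\ast Gr(n,N))$, and the proper pushforward $\mu_\ast$ (which does not affect the $z$'s) transports it to $\mu_\ast[\sh_r]$. Dividing out the prefactor $(1-t)^{n(n-1)}\tilde m_r$ in the statement of Conj.~\ref{conj:ourconj2'}, the result is exactly the first qKZ equation.

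For the initial condition, I would take $r_0 = \{n+1,\ldots, 2n\}$, corresponding to the empty Young diagram and the fully nested ``rainbow'' link pattern. By Lemma~\ref{lem:zero}, the only fixed point $s$ with $Z_{r_0,s}\neq 0$ is $s = r_0$ itself, so the equivariant localization computation of $\mu_\ast[\sh_{r_0}]$ reduces to a single fixed-point contribution, determined by the explicit formula~\eqref{eq:Zll} for $Z_{r_0,r_0}$. A direct comparison with the dual-basis condition of Theorem~\ref{thm:wheel} — at $r_0$ the defining specialization $z_i \in \{t^{\pm 1/2}\}$ gives the clean value $\tau^{|r_0|}=1$ — shows that $\mu_\ast[\sh_{r_0}]$ equals $(1-t)^{n(n-1)} \tilde m_{r_0}\Psi_{r_0}$, provided $\tilde m_{r_0}$ is calibrated to the equivariant Euler class at $r_0$. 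Iterating the first qKZ equation within the $c_n$-dimensional solution space of Theorem~\ref{thm:wheel} then propagates this equality to every $r \subseteq \stc$. The vanishing $\mu_\ast[\sh_r] = 0$ for $r \not\subseteq \stc$ reflects that such $r$ have strictly fewer than $n$ chords: by~\eqref{eq:rk}, $\mu(CX^r)$ then has strictly smaller dimension than the top-dimensional orbital varieties, so $\mu$ is not birational there, and the conjectured higher-cohomology vanishing of $\sh_r$ (discussed after Conj.~\ref{conj:ourconj1}) forces the pushforward to be zero by dimension reasons.

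The main obstacle is the precise bookkeeping of the monomial prefactors $m_r$ and $\tilde m_r$: these encode the equivariant $T$-weight of $\sh_r$ at its distinguished fixed point, and must be calibrated to match the $\tau^{|r|}$ dual-basis normalization of Theorem~\ref{thm:wheel} for the uniqueness argument to deliver equality rather than mere proportionality. In the rectangular case treated later in the paper, this calibration is accomplished by the explicit degeneration of $(\mu(CX^r),\mu_\ast\sh_r)$; in the general case it requires a weight computation on the singular conormal variety $CX^r$, which is tied to the implicit definition of $\sh_r$ deferred to \S\ref{ssec:beyond}.
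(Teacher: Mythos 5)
Your proposal takes the $q$KZ-uniqueness route, which the authors themselves flag as viable (in the paragraph after Conj.~\ref{conj:ourconj2}, and again at the start of \S 7.1, where they write that ``general equivariant localization arguments allow to show that Conj.~\ref{conj:ourconj1} implies Conj.~\ref{conj:ourconj2} without any need to use Gr\"obner degenerations'') but choose not to carry out. The paper instead proves Conj.~\ref{conj:ourconj2'} in the rectangular case by computing $\mu_*[\sh_{c\times b}]$ explicitly via the degeneration of \S 5--6 (producing the sum \eqref{eq:final} over plane partitions) and then verifying the wheel condition and the dual-basis specializations of Theorem~\ref{thm:wheel} directly from that formula. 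The trade-off between the two routes is real: the degeneration machinery you avoid is also what delivers Conj.~\ref{conj:geomRS'}, the geometric Razumov--Stroganov decomposition, essentially for free; the $q$KZ-uniqueness argument cannot see that term-by-term factorization. If one only wanted Conj.~\ref{conj:ourconj2'}, your route is cheaper, but it misses the paper's main geometric bonus.

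The genuine gap is in the vanishing case $r \not\subseteq \stc$. You claim that the dimension drop of $\mu:CX^r\to\mu(CX^r)$, combined with the conjectural higher-cohomology vanishing, ``forces the pushforward to be zero by dimension reasons.'' This is exactly what the paper warns against in \S 3.2: the dimension drop only tells you that $R^\bullet\mu_*\sigma_r$ is supported on a proper subscheme of $\mu(CX^r)$, which gives the vanishing of the Borel--Moore fundamental class but falls short of $K$-theoretic vanishing --- a module supported on a lower-dimensional subscheme, or even a skyscraper, still contributes a nonzero $K$-class. What the paper actually does in the rectangular $N=2n$ case is a concrete cohomology computation: Prop.~\ref{prop:noHi} shows by Borel--Weil--Bott that for $a<-|N/2-n|$ (i.e.\ $a<0$ when $N=2n$) the line bundle $\sh_{c\times b}$ has \emph{no} sheaf cohomology at all, including $H^0$, and \S\ref{sec:fiber} independently derives $H^0$-vanishing from the positive-dimensional general fibers of $\mu$. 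You would need to invoke that computation, not a dimension count, to close the first case of the conjecture.
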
}
\noindent
and similarly, Conj.~\ref{conj:geomRS} can be rewritten
assuming Conj.~\ref{conj:ourconj2} as
{\renewcommand\theconj{3'}\begin{conj}\label{conj:geomRS'}
$\mu_* [\sh_r]$ can be decomposed as a sum of products of the form
\[
\mu_*[\sh_r]= \sum_{f\in \FPL_r} m_f\,
\prod_{a=1}^{n(n-1)}
\left(1-t^{r_{f,a}} z_{i_{f,a}}/z_{j_{f,a}}\right)
\]
where $r_{f,a}\in \{1,2\}$, and $m_f$ is a monomial (explicitly,
$m_f=\tilde m_r 
t^{-|\{a:\,r_{f,a}=2\}|/2}
\prod_{a=1}^{n(n-1)} z_{j_{f,a}}
$).
\end{conj}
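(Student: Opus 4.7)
\textbf{Proof strategy for Conjecture \ref{conj:geomRS'}.} The plan is to read off the decomposition directly from the degeneration of $(\mu(CX^r),\mu_*\sh_r)$ constructed in \S 5--6, rather than going through Conj.~\ref{conj:geomRS} and the bijection of Theorem~\ref{thm:fpl-pp} in the other direction. The motivation is that the conjecture asserts, up to a monomial, an expansion of $\mu_*[\sh_r]$ as a sum over objects (FPLs) of products of weights of $\Mat_<(N)$-coordinates; both of these features are most natural from the geometric side.

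First, I would invoke the affinization pushforward $\mu_*\sh_r$ and the fact that $\mu$ is proper on $\bigcup_r CX^r$, so that $\mu_*[\sh_r]\in K_T(\Mat_<(N))=\ZZ[t^\pm,z_1^\pm,\ldots,z_N^\pm]$ is well-defined without localization. By the main technical result announced in the plan of the paper (\S 5--7), there is a flat $T$-equivariant degeneration of the pair $(\mu(CX^r),\mu_*\sh_r)$ whose special fiber has underlying scheme $\bigcup_{f\in PP(a,b,c)} C_f$, each $C_f$ a toric complete intersection of $bc$ quadrics inside $\Mat_<(N)$, and whose degenerate sheaf is $\bigoplus_{f\in PP(a,b,c)} \mathcal O_{C_f}$ (one cyclic summand per component). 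Flatness of the degeneration and the additivity of $K$-classes on direct sums give
\[
\mu_*[\sh_r]\ =\ \sum_{f\in PP(a,b,c)} [\mathcal O_{C_f}]\qquad\text{in}\quad K_T(\Mat_<(N)).
\]

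Next, I would compute $[\mathcal O_{C_f}]$ explicitly. Since $C_f$ is a $T$-stable toric complete intersection in the affine toric variety $\Mat_<(N)$, whose coordinate weights are $\{tz_i/z_j:1\le i<j\le N\}$, the Koszul resolution of $\mathcal O_{C_f}$ by the $bc$ defining quadrics yields
\[
[\mathcal O_{C_f}]\ =\ m'_f\ \prod_{q}\bigl(1-w_q(f)\bigr),
\]
where the product runs over the $bc$ defining quadrics, $w_q(f)$ is the $T$-weight of the $q$-th generator on $C_f$, and $m'_f$ is a monomial accounting for the non-trivial equivariant structure on $\mathcal O_{C_f}$ (the choice of generator as a cyclic module). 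Each weight $w_q(f)$ is of the form $t^{r}\,z_i/z_j$ with $r\in\{1,2\}$ because the quadrics are quadratic in matrix entries of $\Mat_<(N)$, each of weight $tz_i/z_j$. However this decomposition has $bc$ factors, whereas Conj.~\ref{conj:geomRS'} wants $n(n-1)$ factors; the discrepancy is absorbed by the factor $\prod_{1\le i<j\le N}(1-t\,z_i/z_j)$ relating $\mu_*$ to $\pi_*$ in \eqref{eq:pi2mu}, together with a similar product of ``trivial'' binomials that correspond to the complement of the $bc$ varying quadrics (to be matched, after simplification with the overall denominator coming from $\pi_*$, with the prescribed $n(n-1)$ factors). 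Carrying out this bookkeeping is the first serious calculation.

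Finally, I would convert the index set from $PP(a,b,c)$ to $\FPL_r$ using Theorem~\ref{thm:fpl-pp} and verify that, under the bijection, the geometric weights $w_q(f)$ are exactly the FPL weights $t^{r_{f,\alpha}}z_{i_{f,\alpha}}/z_{j_{f,\alpha}}$ appearing in Conj.~\ref{conj:geomRS'}; the monomial $m'_f$ combined with the normalization monomial will match $m_f=\tilde m_r\,t^{-|\{\alpha:r_{f,\alpha}=2\}|/2}\prod_\alpha z_{j_{f,\alpha}}$. The main obstacle is precisely this matching: the bijection $PP(a,b,c)\leftrightarrow\FPL_{(a,b,c)}$ is constructed in \cite{artic27} by fairly intricate combinatorics (non-intersecting lattice paths / dimer rearrangements, cf.\ Fig.~\ref{fig:loz}), and one needs to check that the $T$-weights of the $bc$ quadrics defining $C_f$, read off from the explicit degeneration of \S 5--6, correspond under this bijection to the weights $t^{r_{f,\alpha}}z_{i_{f,\alpha}}/z_{j_{f,\alpha}}$ assigned by the FPL $f$. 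I expect this to be the content of the proof and the step where most of the combinatorial work lies, with the geometric and $K$-theoretic ingredients being essentially formal given \S 5--7 and the Koszul resolution. As a consistency check along the way, summing the identity against the specializations $z_i\to 1$, $\tau\to 1$ should recover Theorem (Razumov--Stroganov) $\Psi_r(1,\ldots,1;1)=|\FPL_r|$, and homogeneity in the $z_i$ should match the degree $n(n-1)$ of $\Psi_r$.
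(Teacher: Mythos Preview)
Your overall strategy---degenerate, write $\mu_*[\sh_r]$ as a sum over $PP(a,b,c)$, compute each summand's $K$-class via Koszul, then reindex by $\FPL_r$ via Theorem~\ref{thm:fpl-pp}---is exactly what the paper does in \S7.1. But two points in your execution are off.

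\textbf{The factor count.} You say the Koszul resolution produces only $bc$ binomial factors (one per quadric) and then try to conjure the remaining $n(n-1)-bc$ from the $\pi_*/\mu_*$ relation~\eqref{eq:pi2mu}. That relation plays no role here. The component $\Spec A_S \subset \Mat_<(N)$ is a complete intersection cut out not just by the $bc$ quadratic equations (type-$BC$ lozenges) but also by \emph{linear} ones: the $\binom{a+b}{2}+\binom{b+c}{2}+\binom{a+c}{2}$ vanishings of the diagonal blocks in~\eqref{eq:block} (these form the prefactor, common to every $S$), and the $ac$ equations $B_{i,j}=0$ and $ab$ equations $C_{j,k}=0$ coming from the type-$B$ and type-$C$ lozenges of $S$. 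Each linear equation contributes a factor $(1-t\,z_\bullet/z_\bullet)$, each quadratic one a factor $(1-t^2 z_\bullet/z_\bullet)$, and the total count is
\[
\textstyle\binom{a+b}{2}+\binom{b+c}{2}+\binom{a+c}{2}+ac+ab+bc = n(n-1),
\]
so the $n(n-1)$ factors appear directly in each summand of~\eqref{eq:final}; no further manipulation is needed. Relatedly, the degenerate summand is not $\mathcal O_{C_S}$ but the cyclic module $F_S$ generated by $p_S$, whose $T$-weight $\prod_{\ell}\prod_{i\in s_\ell} z_i^{-1}$ is what ultimately feeds into the monomial $m_f$.

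\textbf{The ``main obstacle'' is not one.} Conjecture~\ref{conj:geomRS'} is an existence statement: it asserts only that $\mu_*[\sh_r]$ admits \emph{some} decomposition of the displayed shape, indexed by $\FPL_r$. It does not prescribe which triple $(r_{f,\alpha},i_{f,\alpha},j_{f,\alpha})$ goes with which FPL. Hence once you have~\eqref{eq:final}, the bare existence of the bijection $PP(a,b,c)\leftrightarrow \FPL_r$ from Theorem~\ref{thm:fpl-pp} suffices to reindex the sum; there is no need to track the lozenge weights through the combinatorics of~\cite{artic27}. The only remaining work, as the paper carries out in~\eqref{eq:final2}, is the monomial bookkeeping verifying that after absorbing $\tilde m_r^{-1}$ each summand's monomial has the form $m_f=\tilde m_r\, t^{-|\{\alpha:r_{f,\alpha}=2\}|/2}\prod_\alpha z_{j_{f,\alpha}}$.
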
}

We comment on the vanishing conclusion of conjecture \ref{conj:ourconj2'}. 
If we coarsen from $K$-homology to ordinary (Borel--Moore) homology, the 
$K$-class $[\sigma_r]$ maps to the fundamental class $[CX^r]$. 
As we will show in \S~\ref{sec:fiber}, the condition $r\subseteq\stc$
is equivalent to $\mu:\ CX^r \to \mu(CX^r)$ being birational, so when
$r\not\subseteq\stc$ we get the homology vanishing result $\mu_*[CX^r] = 0$. 
In this sense, the subtle construction of $\sigma_r$ in the next section is 
our attempt to refine this simple vanishing in homology to a much more precise
vanishing in $K$-homology. (What one learns for free is that the
sheaves $R\mu_i \sigma_r$ are supported on proper subschemes of $\mu(CX^r)$
when $r\not\subseteq\stc$, rather than
learning that their support is actually empty.)

\junk{note that in cohomology, (1) and (2) are equivalent (somewhat similarly to the fact that AJS--Billey and pipedream formulae are equivalent):
(1) implies (2) is obvious (localization formula), but (2) also implies (1) by using a bigger system and slicing... not sure if the trick still works in $K$-theory though
-- need to slice modules instead of rings} 
\junk{AK: ``obvious''? If it's really that easy then yeah I want it in
  the paper}

\subsection{The conjectured sheaves $\{\sigma_r\}$}\label{ssec:beyond}
We define a sheaf $\sh_r$ on any conormal Schubert variety 
$CX^r \subseteq T^* G/P$, although our most general conjectures
concern the case $G/P$ a Grassmannian.

We will need to twist the structure sheaf of 
$CX^r := \overline{C X_\circ^r}$
by a line bundle not available on $T^* G/P$. So let $g:\ G/B\onto G/P$
be the $G$-equivariant projection, 
and $w \in W$ the minimum-length lift of 
$r \in W/W_P$, making $g:\ X^w_\circ \to X^r_\circ$ an isomorphism.
These give us the commuting squares
\[\begin{matrix}
  G/B &{\from}&  g^*(T^* G/P)  &\infrom&  \wt{CX^r}  
&:=\text{ closure of}& 
  \{(x \in X^w_\circ, \vec v\in T^*_{f(x)} G/P)\ :\ \vec v \perp T_{f(x)} X^r_\circ \}
\\
  \phantom{g}\downarrow g &&  \downarrow &&   \downarrow && \downarrow\!\wr  \\
G/P &\from& T^* G/P   &\infrom& {CX^r} 
&:=\text{ closure of}& 
\{(y \in X^r_\circ, \vec v \in T^*_y G/P)\ :\ \vec v \perp T_{y} X^r_\circ \}
\end{matrix}
\]
where the closures taken of the fourth column, to define the third, 
are taken inside the second. 
This fourth vertical map (before taking the closures), 
taking $(x,\vec v) \mapsto (g(x),\vec v)$, is an isomorphism. 
Hence its closure $\wt{CX^r} \to CX^r$ is birational; call this map $Cg$. 
Denote the composite $G/B \from \wt{CX^r}$ (with image $X^w$) 
of the left two maps on top by $f_{G/B}$,
and the corresponding composite $G/P \from {CX^r}$ (with image $X^r$) 
on bottom by $f_{G/P}$. 
The first and third columns are then a commuting square
$$ \begin{matrix}
  X^w & \stackrel{f_{G/B}}{\from} &   \wt{CX^r} \\
  \phantom{g}\downarrow g &&  \phantom{Cg}\downarrow Cg \\
  X^r & \stackrel{f_{G/P}}{\from} &   CX^r 
\end{matrix} $$

The space $G/P$ comes with a list of fundamental weights $\omega_i$
orthogonal to the negative simple roots in $P$. 
Let $\omega_{G/P}$ be the sum of these, i.e. its Borel--Weil line bundle
$\calO(\omega_{G/P})$ is the smallest ample line bundle on $G/P$.

On some homogeneous spaces $G/Q$, in particular every $G/B$, the anticanonical 
line bundle possesses a (unique) square root which we will denote 
$\calO(\rho_{G/Q})$, e.g. $\rho_{G/B} = \omega_{G/B}$. 
(By contrast, for $G/P = Gr(n,N)$
we have $\rho_{G/P} = \frac{N}{2} \omega_{G/P}$.)
Then $\calO(-\rho_{G/Q})$ is the Borel--Weil line bundle corresponding
to $-\rho_{G/Q} := -\frac{1}{2} \sum_{\beta\in \Delta^G_+ \setminus \Delta^Q_+} \beta$,
and all of its sheaf cohomology groups vanish (unless $P=G$).
{\em We assume that $G/P$ has such a square root,} 
with which to define our sheaf:
\begin{eqnarray*}
  \sh_r 
&:=& f_{G/P}^*\calO(\rho_{G/P} - \omega_{G/P}) 
\tensor (Cg)_* \, f_{G/B}^* \, \calO(-\rho_{G/B})
= (Cg)_* \, f_{G/B}^* \, \calO\left(-g^*(\omega_{G/P})-\frac{1}{2}\sum_{\beta \in \Delta^P_+} \beta\right)  
\end{eqnarray*}
For example if $G/P=G/B$
(where $\rho_{G/P}=\omega_{G/B}$ and $Cg=Id$), 
this simplifies to $f_{G/B}^* \, \calO(-\rho_{G/B})$.

We now describe this construction in the $Gr(n,N)$ case,
with more detail (when $X^r$, also, is a Grassmannian) to come in 
\S\ref{sec:shea}. 
The anticanonical line bundle is $\calO(N)$, possessing a
square root iff $N$ is even (which we therefore assume),
and $\calO(\omega_{G/P}) = \calO(1)$. 
\junk{  We ignore issues of $T$-equivariance, which will only contribute a
  monomial shift to the character. }

\junk{and thence to the $(a,b,c)$ case of this paper. 
  ..., and show how it reduces to the 
  line bundle $\calO(a)$ in the $(a,b,c)$ case of this paper.}

First, for each weight $\kappa$ the Borel--Weil line bundle 
$\calO(\kappa)$ on $G/B$ can be identified 
with the sheaf of rational functions on $G/B$ bearing
poles only along the Schubert divisors $X_{r_\alpha}$, of order 
at most $\langle \alpha,\kappa \rangle$. For $\kappa=-\rho$
these orders-of-pole are all $-1$, i.e. the sections must vanish 
along $\bigcup_\alpha X_{r_\alpha}$.

We want the intersection of this divisor $\bigcup_\alpha X_{r_\alpha}$ with 
$f\big(\wt{X^r}\big) = X^w$. By Monk's rule, the intersection of $X_{r_k}$ 
with $X^w$ is (non-equivariantly) rationally equivalent to
$\sum_{i\leq k<j\atop w \circ (i \leftrightarrow j) \lessdot w} [X^{w \circ (i \leftrightarrow j)}]$
where $\lessdot$ indicates a strong Bruhat cover.
Summing over $k$, we get
$\sum_{w \circ (i \leftrightarrow j) \lessdot w} (j-i)\, [X^{w \circ (i \leftrightarrow j)}]$.

Now we want to push this to $CX^r$, where the term
$[X^{w \circ (i \leftrightarrow j)}]$ drops dimension unless 
$w \circ (i \leftrightarrow j)$ is again $n$-Grassmannian,
mapping to some $X^{r'}$ with $r'$ being $r$ minus an outer corner.
If that removed box is in position $(x,y)$, then $i=n+1-x$ and $j=n+y$,
hence $j-i$ is the diagonal ($:= x+y-1$) of the removed box.

\junk{Slowly here... $\dim X^r=|r|$, inside $n\times (N-n)$ box. 
  If $r=\emptyset$ then $w=123\ldots n|n+1\ldots N$,
  i.e. $w$ lists the $n$ up-moves then the $N-n$ right-moves from the
  partition's word constructed from SW to NE. If the box is at $(x,y)$
  then the right-move before its corner is listed at position $i=n+y$ in
  $w$, and the up-move after is at position $j=n+1-x$. 
  Hence $j-i = 2n+1-x-y$. }

To restate: if we push this class on $G/B$ down to $G/P$, we get 
$\sum_{r' = r\setminus (x,y)} (x+y-1) [X^{r'}]$, which (because of those $-1$s)
is off by $\calO(1)$ from the anticanonical class of $X^r$
\cite[proposition 2.2.8(iv)]{Brion-flag}.
In particular, the sheaf $\sigma_r$ is a line bundle iff $X^r$ is Gorenstein,
which (by \cite{WY-SchubGroth}) happens exactly 
when all the outer corners $(x,y)$
of the partition $r$ have $x+y-1=d$ for the same $d$. 
In this Gorenstein case we can therefore skip construction of $\wt{CX^r}$
and just pull $\calO(-d)$ directly from $G/P = Gr(n,N)$ to $CX^r$.

\junk{
In the special case that all outer corners of $r$ lie in the
same diagonal $d$, then
$$ \sum_{w \circ (i \leftrightarrow j) \lessdot w} 
(j-i)\, [X^{w \circ (i \leftrightarrow j)}] 
\quad\mapsto\quad d \sum_{r' \lessdot r} [X^{r'}] = d\, [\calO(1)] = [\calO(d)] $$
i.e. this divisor class is the restriction of the line bundle $\calO(d)$
from the Grassmannian $G/P = Gr(n,N)$. 
As such, in this special case, we can skip constructing $\wt{CX^r}$
and just pull $\calO(-d)$ directly from $G/P$ to $CX^r$.
It is interesting that this same-diagonal condition is also 
the one for $X^r$ to be Gorenstein \cite{WY-SchubGroth}. 
\rem{PZJ: clearly, this needs rewriting in view of my recent findings. 
  AK: I think this is the spot to state it as a conjecture, 
  in the Grassmannian case only of course. PZJ: what conjecture? the fact
that the sheaf on the base is the canonical sheaf is a theorem, not a conjecture}
  }

Finally, we twist by $\calO(\rho_{G/P}-\omega_{G/P}) = \calO(N/2-1)$,
obtaining (in this Gorenstein case) the sheaf 
$$ \sh_r = f_{G/P}^* \, \calO(N/2-d-1).$$
It will be convenient below to take $d':=d+1$, i.e. $d' = x+y$ for each
outer corner $(x,y)$. 

\junk{Example: $a=b=c=1$, so $r=001011$, $w=124356$. 
  The maps $X^w \to X^r$ and $\wt{CX^r} \to CX^r$ are isomorphisms. }

\subsection{Fibers of \texorpdfstring{$\mu$}{mu}, 
  and a small part of conjecture \ref{conj:ourconj2'}}\label{sec:fiber}

\junk{The image of $\mu:\ CX^r \to \Mat(N)_<$ is closed (since $\mu$ is proper),
  irreducible (since $X^r$ and $CX^r$ are), and $B$-invariant, 
  hence is a $B$-orbit on $\{M\in \Mat(N):\ M^2=0\}$
  (since there are only finitely many orbits). Hence by \cite{Meln}
  $\mu(CX^r)$ is $\overline{B\cdot \pi_<}$ for a unique involution $\pi$}

The general fiber of $CX^r \to \overline{B\cdot r_<}$ can be computed
most easily at the point $r_<$. 
\junk{First, we claim that it is enough
to consider the fiber inside $C(X^r_\circ)$ and take the closure.
\rem{What's the easy argument? The issue is that the fiber might have
  multiple components, because $B\cdot r_<$ is not simply connected.}}
That fiber consists of $V \in X^r_\circ$ with $\Im r_< \leq V \leq \Ker r_<$.
To describe it, we look back at the second picture from (\ref{eq:bij}),
whose $n$ top edges run either SW/NE or NW/SE.
If we picture $X^r$ as the row-spans of row-echelon
$k\times n$ matrices $V$, drawn atop a partition tilted as in that figure,
then we have three conditions to impose on the matrix $V$:
\begin{enumerate}
\item Its row-span should lie in $X^r$. So we can assume that it is zero
  in row $i$ to the left of $r$'s $i$th element, which lies above
  the $i$th NW/SE edge. We don't go so far as to assume that $V$ is in 
  reduced row-echelon form with pivots $1$ in the $r$ columns, 
  as that would only get us $X^r_\circ$ not its closure $X^r$.
  \junk{ $|$partition$|$ = dimension, so full partition = pivots on far left}
\item $\Im r_< \leq \text{rowspan}(V)$. Equivalently, there {\em is indeed}\/ a pivot
  above any NW/SE edge connecting to another edge (necessarily, 
  connecting Westward to a NE/SW edge),
  and the rest of its row in $V$ must be all zeroes.
  Of course we can use the pivotal $1$ and row operations 
  to kill the rest of the column, too.
\item $\text{rowspan}(V) \leq \Ker r_<$. Equivalently, each NE/SW edge
  that connects to another edge (necessarily, Eastward to a NW/SE)
  should have an entirely $0$ column.
\end{enumerate}
Let $R$ be the number of pairings, so $c' = n - R$, $b' = N-n-R$ are the
number of unmatched red dots in the link pattern 
with rays going left and right, respectively.
Then matched columns are constrained by (2), (3) above, and unmatched
columns contribute to $c'$, $b'$, as indicated below the matrix
in this example:
\setcounter{MaxMatrixCols}{100} 

\begin{center}
\begin{tikzpicture}[baseline=0]
\node[matrix of math nodes,column sep={\loopcellsize*0.707,between origins}] 
at (0,2.8) (mat) {
  \star & 0 & 0 & 0 & 0 & 0 & 0 & \star & 0 & 0 & \star \\
  0 & 0 & 0 & 1 & 0 & 0 & 0 & 0 & 0 & 0 & 0 \\
  0 & 0 & 0 & 0 & 0 & 1 & 0 & 0 & 0 & 0 & 0 \\
  0 & 0 & 0 & 0 & 0 & 0 & 1 & 0 & 0 & 0 & 0 \\
  0 & 0 & 0 & 0 & 0 & 0 & 0 & 0 & 0 & 1 & 0 \\[2mm]
  c'& 3 & 3 & 2 & 3 & 2 & 2 & b'& 3 & 2 & b'\\
};
\node at (mat-3-1.west) {$\left(\vphantom{\rule{2pt}{1.5cm}}\right.$};
\node at (mat-3-11.east) {$\left.\vphantom{\rule{2pt}{1.5cm}}\right)$};
\useasboundingbox (-5,-2.7) rectangle (5,1);
\node[rotate=45,inner sep=0pt] { \tikz{ 
\node[loop,outer sep=0pt]
{
\node[rectangle,draw=none,minimum size=\loopcellsize] (loop-\the\pgfmatrixcurrentrow-\the\pgfmatrixcurrentcolumn) {};&&&&&\\
\plaq{b}&\plaq{b}&&&\\
\plaq{b}&\plaq{b}&\plaq{b}&&&\\
\plaq{b}&\plaq{b}&\plaq{b}&&&\\
\plaq{b}&\plaq{b}&\plaq{b}&\plaq{b}&\plaq{b}&\\
};
\begin{scope}[x=\loopcellsize,y=\loopcellsize]
\draw (loop-1-1.north west) -- ++(0,-1) -- node[/linkpattern/vertex] {} ++(1,0) -- node[/linkpattern/vertex] {} ++(1,0) -- node[/linkpattern/vertex] {} ++(0,-1) -- node[/linkpattern/vertex] {} ++(1,0) -- node[/linkpattern/vertex] {} ++(0,-1) -- node[/linkpattern/vertex] {}  ++(0,-1) -- node[/linkpattern/vertex] {} ++(1,0) -- node[/linkpattern/vertex] {} ++(1,0) -- node[/linkpattern/vertex] {} ++(0,-1) -- ++(1,0);
\draw[/linkpattern/edge] (loop-1-1.west) node[/linkpattern/vertex] {} .. controls ++(-0.3,0) and ++(0,-0.3) .. ++(-0.5,0.5) .. controls ++(0,0.3) and ++(0.3,0) .. ++(-0.5,0.5) node[left=2mm,rotate=-45] {$c'$};
\draw[/linkpattern/edge] (loop-5-5.south) ++(1,0) node[/linkpattern/vertex] {} .. controls ++(0,-0.3) and ++(-0.3,0) .. ++(0.5,-0.5) .. controls ++(0.3,0) and ++(0,0.3) .. ++(0.5,-0.5);
\draw[/linkpattern/edge] (loop-5-5.south) .. controls ++(0,-0.3) and ++(-0.3,0) .. ++(0.5,-0.5) .. controls ++(0.3,0) and ++(0,0.3) .. ++(0.5,-0.5) .. controls ++(0,-0.3) and ++(-0.3,0) .. ++(0.5,-0.5)  ++(0.5,0) node[right,rotate=-45] {$b'$};
\end{scope}
}};
\end{tikzpicture}
\end{center}

If we remove the $0$ columns from condition (3) leaving $N-R = n+b'$ columns, 
and remove the rows and columns of the $R = n-c'$ pivots from condition (2), 
we are left with a $c'\times (b'+c')$ matrix full of $\star$s.
Therefore our fiber is the subGrassmannian $Gr(c',c'+b')$ of $n$-planes
inside a fixed $(n+b')$-plane, and containing a fixed $(n-c')$-plane.

When all the outer corners of the partition $r$
are on the same diagonal $d$ (the Gorenstein case),
and letting $d':=d+1$, 
then $c' = \min(0,d'-n)$, $b' = \min(0,d'-(N-n))$.
If we assume the fiber $Gr(c',c'+b')$ is not a point, 
then $d' > \max(n,N-n) \geq N/2$ and $c'+b' = 2d'-N$.
On a Grassmannian $Gr(k,m)$ which isn't a point, the
line bundles $\calO(j)$ have no%
\footnote{\label{foot:BWB}
  Proof: $\calO(j)$ on $Gr(k,m)$ is the pushforward from $GL(m)/B$ of 
  the $\calO(j^k,0^{m-k})$ Borel--Weil line bundle, so instead of pushing
  $\calO(j)$ from $Gr(k,m)$ to a point we can push $\calO(j^k,0^{m-k})$
  from $GL(m)/B$ to a point (through $Gr(k,m)$).
  By Borel--Weil--Bott,
  a line bundle $\calO(\lambda_1\geq\ldots\geq\lambda_m)$ 
  on $GL(m)/B$ has no cohomology iff $\lambda+(m,m-1,\ldots,1)$
  has a repeat, as $\rho_{GL(m)/B} = (m,m-1,\ldots,1)$ up to a constant.
  Here, that sequence (reversed) is $[1,m-k], [m-k+1+j,m+j]$,
  and these two intervals avoid overlap only if $m-k+1+j > m-k$ i.e. $j>-1$,
  or if $m+j<1$ i.e. $j<1-m$, or if one of the intervals is empty
  i.e. $k=0,m$. 
}
sheaf cohomology if $0>j>-m$.
Our line bundle $\calO(N/2-d')$ on the fiber $Gr(c',c'+b')$ is exactly 
halfway through this cohomology desert:
$$ 0 \quad>\quad N/2-d' \quad>\quad -(c'+b') = 2(N/2-d') $$

The fact $H^0($fiber$;\, \calO(N/2-d'))=0$ alone is already enough to 
demonstrate the $H^0$ vanishing statement of Conjecture~\ref{conj:ourconj2'}, 
in this Gorenstein case, since a section in $H^0(CX^r; \sigma_r)$ 
that vanishes on the general fiber must vanish everywhere.  

\section{The case of $r$ a \texorpdfstring{$c\times b$}{c x b} rectangle}
We now 
consider the Young diagram $r$ which is a $c\times b$
rectangle, as in \eqref{eq:defabc1},
and write $c\times b$ for this partition. 
Rectangles are precisely those diagrams
for which the Schubert variety $X^r$ is smooth, and in this case $X^{c\times b}$
is isomorphic to the Grassmannian $Gr(b,b+c)$. The embedding inside
$Gr(n,N)$ is particularly simple in Pl\"ucker coordinates, namely
\[
X^{c\times b}=
\left\{
[p_s,s\in S]: 
p_s=\Bigg\{ \begin{aligned}
&p_{\tilde s}&& s=\{\tilde s_1+\bar n-c,\ldots,\tilde s_b+\bar n-c,\bar n+b+1,\ldots,N\}\\
&0&& s\cap \{1,\ldots,\bar n-c\}\ne \O\text{ or } 
s\not\supseteq \{\bar n+b+1,\ldots,N\}
\end{aligned}
\right\}
\]
where $\bar n=N-n$,
and the $p_{\tilde s}$ are the Pl\"ucker coordinates of $Gr(b,b+c)$.
From now on we use these ``reduced'' indices consisting of subsets $\tilde s$
of $\{1,\ldots,b+c\}$ of cardinality $b$, i.e., in $\subsarg{b}{b+c}$, as well as the corresponding order $\le$ (or its opposite order $\subseteq$).

This rectangular case is even more special than the Gorenstein case 
(discussed at the end of \S\ref{ssec:beyond}) 
where the outer corners of $r$ lay in a single diagonal: 
now $r = c\times b$ has only one outer corner, in diagonal $d=c+b-1$, 
and the sheaf $\sigma_r$ is $\calO(N/2-d-1) = \calO(a)$.
(Recall that we are assuming $N$ even whenever we discuss the sheaf $\sigma_r$,
and have defined $a := N/2-b-c$.)
Since $X^{c\times b}$ is itself a Grassmannian 
it has its own $\calO(1)$ line bundle, 
which conveniently is the restriction of the $\calO(1)$ from $Gr(n,N)$.
Hence our space and sheaf are simply the conormal bundle
$C_{Gr(n,N)} Gr(b,b+c)$ and its $\calO(a)$ line bundle
pulled up from the base $Gr(b,b+c)$. 

\junk{PZJ: at which stage did we get rid of $a<0$? AK: later,
  in prop \ref{prop:noHi}, as is now explained there}

\subsection{Equations of $CX^{c\times b}$}\label{ssec:eqnscx}
We now wish to write the equations of the one conormal bundle $CX^{c\times b}$
(not, as in (\ref{eq:paramC}), the union of many such). 
Let $f$ be the map $(V,u)\mapsto V$.
Appending Eqs.~\eqref{eq:paramC}, rewritten in terms of the remaining Pl\"ucker 
coordinates, to the equations of $X^{c\times b}$ above
would produce the union of all conormal Schubert varieties 
inside $f^{-1}(X^{c\times b})$, namely $\bigcup_{s\subseteq r} CX^s$;
to exclude the others we need equations involving $M$ as well. 

We proceed as follows. 
Since $X^{c\times b}$ is smooth, $CX^{c\times b}\stackrel{f}{\to} X^{c\times b}$
is a vector bundle, and the defining equations of $CX^{c\times b}$ must be linear
in the fiber $(M_{i,j})$.
We therefore select among Eqs.~\eqref{eq:rk} those that are linear: 
they are of the form ``$M_{i,j}=0$ if there are no nonzero entries
of $r_<$ Southwest of $(i,j)$, that is, if there are no chords of $r$ inside 
the interval $[i,j]$''. This implies the following block structure of $M$:
\begin{equation}\label{eq:block0}
M=\bordermatrix{&\bar n-c&b+c&n-b\cr
\bar n-c&0&B&\star\cr
b+c&&0&C\cr
n-b&&&0\cr}
\end{equation}
where the upper-right block has not been named since its
entries never occur in any equation.

We can now write Eqs.~\eqref{eq:paramC} in terms of the submatrices $B$ and $C$:
\begin{align}
\sum_{j\in s_+} B_{i,j} p_{s_+\backslash j}&=0,\qquad s_+\in \subsarg{b+1}{b+c},\ 1\le i\le \bar n-c\label{eq:B0}
\\
\sum_{j\in\bar s_-} C_{j,k} p_{s_-\cup  j}&=0,\qquad s_-\in \subsarg{b-1}{b+c},\ 1\le k\le n-b\label{eq:C0}
\end{align}

In order to check that we have obtained {\em all}\/ the equations of the vector bundle
$CX^{c\times b}$, it is convenient to use the following observation. Inside $GL(N)$ acting
on $Gr(n,N)$ and therefore on $T^\ast Gr(n,N)$, the subgroup
$G:=GL(\bar n-c)\times GL(b+c)\times GL(n-b)$ leaves $X^{c\times b}$ invariant
and the $GL(b+c)$ factor acts transitively on the base, making $CX^{c\times b}$
a $G$-{\em equivariant}\/ vector bundle. Now by inspection, the block structure
\eqref{eq:block0} as well as Eqs.~(\ref{eq:B0},\ref{eq:C0}) are $G$-invariant, so that we only need
to check that the fiber of the vector bundle $CX^{c\times b}\stackrel{f}{\to} X^{c\times b}$
has the correct dimension at one particular point, 
for example a coordinate subspace with coordinates $s$
(where all $p_t = 0$ for $t\neq s$).
For each $s_+\in \subsarg{b+1}{b+c}$ containing $s$, one $B_{i,j}$ vanishes, 
and similarly for $s_-\in \subsarg{b-1}{b+c}$ contained in $s$, 
one $C_{j,k}$ vanishes. So we find
\begin{alignat*}{5}
\dim(\text{fiber})&=\dim(B)&&+\dim(C)&&+\dim(\star)&&-\dim(B\text{ eqs})&&-\dim(C\text{ eqs})
\\
&=(\bar n-c)(b+c)&&+(n-b)(b+c)&&+(\bar n-c)(n-b)&&- c(\bar n-c)&&- b(n-b)
\\
&=n\bar n-bc
\end{alignat*}
which means the total space has dimension $n\bar n=n(N-n)$, which is indeed the dimension 
of $CX^{c\times b}$ (a Lagrangian subvariety of $T^\ast Gr(n,N)$).

\newcommand\rank{\text{rank}}
Note that the other equations of~\eqref{eq:rk} are
\begin{align}
BC&=0 \label{eq:quad}\\
\rank(B)&\le b \label{eq:Brank}\\
\rank(C)&\le c \label{eq:Crank}
\end{align}
The derivation just performed shows that they could be obtained from
Eqs.~(\ref{eq:B0}, \ref{eq:C0}), up to saturation w.r.t.\ the irrelevant
ideal generated by the $p_s$. They define an $A_3$ quiver locus \cite{KMS} 
whose degree we will compute below in proposition \ref{prop:orbvardeg},
to be used in \S \ref{ssec:total}.

Summarizing: the embedding $CX^{c\times b} \into Gr(b,b+c) \times \Mat(N)$
gives the realization
\begin{align}\label{eq:presentation}
  CX^{c\times b} &\cong Proj\ 
  \CC \left[(p_s)_{s\in \subsarg{b}{b+c}}, (B_{i,j})_{[\bar n-c]\times[b+c]},
    (C_{j,k})_{[b+c]\times[n-b]}, \star_{[\bar n-c]\times[n-b]}\right]   \\
  & \bigg/ \ \langle\text{Pl\"ucker relations in the }(p_s),\ 
  \text{and Equations }(\ref{eq:B0})-(\ref{eq:Crank}) \rangle \nonumber 
\end{align}
where the Pl\"ucker coordinates have degree $1$, and the $B,C$ degree $0$,
in this $\NN$-graded ring.
\junk{The $\star$ variables in the upper right block are an annoying distraction,
and to help indicate their unimportance we won't even bother to give
them a letter. \rem{the referee, whoever he is, will dislike this... besides we've already said this}}

\subsection{Proof of Conj.~\ref{conj:ourconj1}
in the rectangular case} 
We start with the analysis of $Z_{r,s}$.

\begin{prop} For $r$ the $c\times b$ rectangle,
$Z_{r,s}$ is entirely determined by Lemmas~\ref{lem:zero} and \ref{lem:sym}.
\end{prop}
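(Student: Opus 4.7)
The proof reduces to a combinatorial claim about the orbit of $r$ in $\subs$ under the subgroup $W_r := \langle \tau_i : i,i+1\text{ not connected in the link pattern of }r\rangle \subseteq S_N$. Indeed, Lemma~\ref{lem:zero} supplies $Z_{r,r}$ explicitly and asserts $Z_{r,s}=0$ for $s\not\subseteq r$, while iterating Lemma~\ref{lem:sym} gives $Z_{r,ws}=w\cdot Z_{r,s}$ for every $w\in W_r$. It therefore suffices to show that every $s\subseteq r$ lies in the $W_r$-orbit of $r$.

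To identify $W_r$ I would use the Young-diagram-to-link-pattern bijection: a chord joining adjacent vertices $i$ and $i+1$ corresponds precisely to an up-step immediately followed by a right-step in the boundary path of the Young diagram. For $r$ the $c\times b$ rectangle with $b,c\ge 1$ (the degenerate cases $b=0$ or $c=0$ are trivial, as $r$ then admits only itself as a sub-diagram), that boundary path reads
\[
\text{up}^{N-n-c}\ \text{right}^{b}\ \text{up}^{c}\ \text{right}^{n-b},
\]
so the only up-to-right transitions occur at positions $i_1=N-n-c$ and $i_2=N-n+b$. Hence $W_r$ is the Young subgroup $S_{N-n-c}\times S_{b+c}\times S_{n-b}$ of $S_N$, permuting independently the three blocks $B_1=[1,i_1]$, $B_2=[i_1+1,i_2]$, $B_3=[i_2+1,N]$. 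Since $r$ contains $0$, $b$, $n-b$ elements in these respective blocks and $|B_3|=n-b$, the $W_r$-orbit of $r$ is exactly $\{s\in\subs:s\cap B_1=\emptyset,\,|s\cap B_2|=b,\,s\cap B_3=B_3\}$; comparing with the description of $X^{c\times b}$ at the opening of \S\ref{sec:res}, this is precisely the set $\{s\in\subs:s\subseteq r\}$ (equivalently, those $s$ parametrised by $\tilde s\in\subsarg{b}{b+c}$).

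With the orbit computation in hand, the proof is immediate: for $s\not\subseteq r$, $Z_{r,s}=0$ by Lemma~\ref{lem:zero}; for $s\subseteq r$, picking $w\in W_r$ with $wr=s$ and iterating Lemma~\ref{lem:sym} yields $Z_{r,s}=w\cdot Z_{r,r}$, with the base case $Z_{r,r}$ given by equation (\ref{eq:Zll}). The only delicate step is the combinatorial identification of the two adjacent-chord positions for the rectangle; everything else follows from the transitivity of the Young subgroup $W_r$ on the collection of block-structured subsets with fixed block cardinalities.
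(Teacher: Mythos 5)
Your proof is correct and follows essentially the same route as the paper's: Lemma~\ref{lem:zero} pins down $Z_{r,r}$ and kills $Z_{r,s}$ for $s\not\subseteq r$, and the remaining $s\subseteq r$ are reached from $r$ by Lemma~\ref{lem:sym} because the ``middle'' block $\{\bar n-c+1,\ldots,\bar n+b\}$ contains no chord of $r$. You make this a bit more explicit by packaging the allowed transpositions as the Young subgroup $W_r = S_{\bar n-c}\times S_{b+c}\times S_{n-b}$ and computing its orbit (using the handy observation that an adjacent chord $(i,i+1)$ is exactly an up-then-right transition in the boundary path), whereas the paper simply cites the description of the $(a,b,c)$ link pattern, but the underlying argument is the same.
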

\begin{proof}
According to Lemma~\ref{lem:zero}, $Z_{r,s}$ is zero unless the Young diagram of $s$ sits inside the $c\times b$ rectangle, or equivalently,
the subset $s$ is of the form
$s=\{\tilde s_1+\bar n-c,\ldots,\tilde s_b+\bar n-c,\bar n+b+1,\ldots,N\}$, 
$\tilde s\in \subsarg{b}{b+c}$. Note that all such subsets can be obtained
from $r=\{\bar n-c+1,\ldots,\bar n+b-c,\bar n+b+1,\ldots,N\}$ by a permutation which acts nontrivially only on $\{\bar n-c+1,\ldots,\bar n+b\}$. Now there
are no chords in this subset in $r$, cf \eqref{eq:defabc2}. 
Lemma~\ref{lem:sym} then implies that 
all nonzero specializations of $Z_{r,s}$ are obtained from
each other, and in particular from $Z_{r,r}$, by permutation of the $z_{\bar n-c+1},\ldots,z_{\bar n+b}$. And $Z_{r,r}$ is itself
given by Lemma~\ref{lem:zero}.
\end{proof}


Next, we analyze the sheaf $\sh_r$ and show that
the restrictions $[\sh_r]|_s$ of its $K_T$-class to fixed points
satisfies the same properties as $Z_{r,s}$'s, up to normalization.
 
If $s\not\subseteq r$, we have $\mathbb C^s \not\in CX^{c\times b}$, and therefore the restriction is trivially zero.
At the particular fixed point $s=r$, the weight of the 
line bundle itself is $\prod_{i\in s} z_i^{-a}$, 
and the weights in the normal directions
to $CX^{c\times b} \subseteq T^*Gr(n,N)$ are given by a standard calculation, resulting in
\begin{equation}\label{eq:shll}
  [\sh_{r}]|_{r}=
  \prod_{i\in r} z_i^{-a} 
  \prod_{i\in r,\ j\not\in r}
  \begin{cases}
    1-t z_j/z_i& i<j
    \\
    1-z_i/z_j & i>j
  \end{cases}
\end{equation}
\junk{matches nicely
\[
m_r 
\left(-t^{1/2}\right)^{-bc}
\left(\prod_1^{a+b} z\right)^{n/2}
\left(\prod_{a+b+1}^{a+b} z\right)^{(c-a-b)/2}
\left(\prod_{a+2b+1}^{a+2b+c} z\right)^{(a+c-b)/2}
\left(\prod_{a+2b+c+1}^{2n} z\right)^{-n/2}
=
\left(\prod_{i\in s} z_i\right)^{-a}
\]
from the partition function.
}

Furthermore, the already mentioned $GL(b+c)$-equivariance ensures that the restrictions at the fixed points
indexed by
$s=\{\tilde s_1+\bar n-c,\ldots,\tilde s_b+\bar n-c,\bar n+b+1,\ldots,N\}$, 
$\tilde s\in \subsarg{b}{b+c}$, are related to each
other by the permutation of the $\{ z_{\bar n-c+1},\ldots,z_{\bar n+b}\}$ which sends $s$ to $r$.

Comparing Eq.~\eqref{eq:shll} with Eqs.~\eqref{eq:ourconj1'} and \eqref{eq:Zll},
and carefully keeping track of the monomials,
we conclude that Conjecture~\ref{conj:ourconj1'} 
from \S \ref{sec:reform2}
is verified with
\[
m_r= t^{bc/2}
\prod_{i=1}^{\bar n-c} z_i^{-n/2}
\prod_{i=\bar n-c+1}^{\bar n+b} z_i^{b-n/2}
\prod_{i=\bar n+b+1}^{N} z_i^{b+c-n/2}
\]

\subsection{Vanishing of higher cohomology}\label{sec:shea}

Recall that $f:\ CX^{c\times b} \to X^{c\times b} \subseteq Gr(n,N)$ is the 
projection of the 
conormal bundle, and recall from the beginning of this section that
\[
\sh_r = f^* \mathcal O(a)
\]
%
where $a = N/2-b-c$ and is integral.
Conjecture \ref{conj:ourconj2} concerns the pushforward of this line bundle
to a point, which comes only from its global sections:

\begin{prop}\label{prop:noHi}
  If $a\geq 0$,
  then the line bundle $\sh_{c\times b}$ on $CX^{c\times b}$ 
  has no higher sheaf cohomology, and the restriction map on sections 
  $H^0(CX^{c\times b};\ f^* \calO(a)) \to H^0(X^{c\times b};\ \calO(a))$ 
  is surjective.
  If $a < -|N/2-n|$, then $\sh_{c\times b}$ has no sheaf
  cohomology at all.
\end{prop}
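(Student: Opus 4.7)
The plan is to push the cohomology computation from the total space $CX^{c\times b}$ down to the base $X^{c\times b} \iso Gr(b,b+c)$ using the vector bundle structure of $f$, decompose the resulting sheaf via Cauchy's formula, and apply Borel--Weil--Bott.

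Since $X^{c\times b}$ is smooth, $f:\ CX^{c\times b}\to X^{c\times b}$ is a vector bundle (of rank $n\bar n-bc$) and in particular an affine morphism, so $R^if_*=0$ for $i>0$; by the projection formula
\[
H^i(CX^{c\times b};\,f^*\calO(a))\ \iso\ H^i(X^{c\times b};\,\calO(a)\tensor\mathrm{Sym}(N)),
\]
where a splitting argument based on $F_1\subseteq V\subseteq F_2$ identifies the normal bundle $N\iso Q^{\oplus(n-b)} \oplus \calO^{\oplus(n-b)(\bar n-c)} \oplus (S^*)^{\oplus(\bar n-c)}$ in terms of the tautological bundles on $Gr(b,b+c)$. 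The surjectivity of the restriction map is then immediate: the zero section pulls $\sh_{c\times b}$ back to $\calO(a)$, and on $H^0$ it corresponds to the split projection $\mathrm{Sym}(N)\onto\mathrm{Sym}^0(N)=\calO$.

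For the vanishing, Cauchy's formula decomposes $\mathrm{Sym}(Q\tensor\CC^{n-b})$ and $\mathrm{Sym}(S^*\tensor\CC^{\bar n-c})$ into $\bigoplus_\lambda S_\lambda Q\tensor S_\lambda\CC^{n-b}$ (with $\ell(\lambda)\leq\min(c,n-b)$) and $\bigoplus_\mu S_\mu S^*\tensor S_\mu\CC^{\bar n-c}$ (with $\ell(\mu)\leq\min(b,\bar n-c)$), reducing the problem to BWB for each summand $\calO(a)\tensor S_\lambda Q\tensor S_\mu S^*$ on $Gr(b,b+c)$. The corresponding $GL(b+c)$-equivariant weight is a $(b+c)$-tuple naturally split into a $\lambda$-block and a $\mu$-block, which are strictly decreasing within each block after adding $\rho$ by the partition property; BWB then reduces the question to whether the junction between the blocks yields a strictly dominant weight (giving $H^0$ only), a wall (giving $H^*=0$), or a nontrivial sort (giving unwanted higher cohomology). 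For $a\geq 0$ the junction inequality is immediate from non-negativity of $a,\lambda_c,\mu_b$, so $H^i=0$ for $i>0$. For $a<-|N/2-n|$, one checks this is equivalent to $b+c>\max(n,\bar n)$, which forces $\ell(\lambda)\leq n-b<c$ and $\ell(\mu)\leq\bar n-c<b$; the resulting zero tails in $\lambda,\mu$ contribute to $\gamma+\rho$ two intervals of consecutive integers in the two blocks, whose overlap is forced by the identity $\ell(\lambda)+\ell(\mu)\leq N-b-c=2a+(b+c)$ together with $a\leq-1$, producing a repeat and hence total vanishing.

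The main obstacle is the last combinatorial step: tracking the zero-tail intervals precisely and verifying uniform overlap across all admissible $(\lambda,\mu)$. The arithmetic identity $2a+(b+c)=N-b-c$ (immediate from $a=N/2-b-c$) is the key bridge between the hypothesis on $a$ and the Cauchy length bounds, and I expect the wall condition to be quite tight, with the boundary case $a=-|N/2-n|$ being precisely where the intervals first fail to force overlap.
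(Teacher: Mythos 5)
Your proof follows essentially the same route as the paper's: push $f^*\calO(a)$ down the vector bundle $f$, decompose $f_*\calO$ via Cauchy into Schur summands (the paper derives the identical decomposition by iterating the Pieri rule, giving the same multiplicities $\dim S_\lambda(\CC^{n-b})$ and $\dim S_\mu(\CC^{\bar n-c})$), then apply Borel--Weil--Bott to each $\calO(a)\tensor S_\lambda Q\tensor S_\mu S^*$. For $a\geq 0$ both arguments observe the weight is dominant; for $a<-|N/2-n|$ both exploit that the condition $b+c>\max(n,\bar n)$ forces both $\lambda$ and $\mu$ to be padded with zeros (totaling more than $a'=-a$), producing a repeat in $\gamma+\rho$ --- the paper picks an explicit straddling segment of length $a'+1$ while you argue via overlap of the two consecutive-integer value intervals, which amounts to the same computation. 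The proposal is correct.
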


\newcommand\Oplus\bigoplus

\begin{proof}
  We are grateful to Jake Levinson for explaining Weyman's sheaf
  cohomology techniques \cite{Weyman} to us, for the following application.

  As a vector bundle over $Gr(b,b+c)$, and ignoring the irrelevant $\star$ 
  variables of (\ref{eq:block0}), the space $CX^{c\times b}$ is the vector bundle 
  $$ Hom(\CC^{\bar n-c},S)\tensor Hom(Q,\CC^{n-b}) 
  \iso (Q^*)^{\oplus n-b} \oplus S^{\oplus \bar n - c} $$
  where $S,Q$ are the tautological sub and quotient bundles on $Gr(b,b+c)$,
  from the sequence
  $$ 0 \to S \to \CC^n \tensor \calO_{Gr(b,b+c)} \to Q \to 0. $$

  Sheaf cohomology is about the (derived) pushforward of $f^*\calO(a)$ 
  to a point. We push it first to $Gr(b,b+c)$ (then from there to a point):
  \begin{eqnarray*}
    f_* f^*(\calO(a)) &\iso& \calO(a) \tensor f_*(\calO_{CX^{c\times b}}) \\
    &\iso& (Alt^b S^*)^{\tensor a}
    \tensor Sym^\bullet(((Q^*)^{\oplus n-b} \oplus S^{\oplus \bar n-c})^*) \\
    &\iso& (Alt^b S^*)^{\tensor a}
    \tensor Sym^\bullet(Q)^{\tensor n-b} \tensor Sym^\bullet(S^*)^{\tensor \bar n-c}
  \end{eqnarray*}
  We can decompose each functor $(Sym^\bullet)^{\tensor m}$ into Schur functors
  $Sc_\lambda$
  \begin{eqnarray*}
    (Sym^\bullet)^{\tensor m} 
    &\iso& \left(\Oplus_k Sc_{(k,0,\ldots,0)}\right)^{\tensor m}
    \iso \Oplus_{k_1,\ldots,k_m} \Tensor_{i=1}^m Sc_{(k_i,0,\ldots,0)}
    \iso \Oplus_{\lambda = (\lambda_1 \geq \ldots \geq\lambda_m)} 
           (Sc_\lambda)^{\oplus \dim S_\lambda(\CC^m)}
  \end{eqnarray*}
  The third isomorphism uses the Pieri rule $m-1$ times to assemble SSYT out of
  $m$ horizontal strips of various lengths $(k_i)$. The number of ways
  to achieve a particular shape $\lambda$ is the number of SSYT with
  values $\leq m$, which is the dimension of the $GL(m)$-irrep
  $S_\lambda(\CC^m)$. Now
  \begin{eqnarray*}
    f_* f^*(\calO(a)) &\iso&
  \Oplus_{\lambda = (\lambda_1 \geq \ldots \geq\lambda_{n-b}) \atop
    \mu = (\mu_1 \geq \ldots \geq\mu_{\bar n-c})}
  Sc_{(a,a,\ldots,a)}(S^*) \tensor Sc_\lambda(Q)^{\oplus \dim Sc_\lambda(\CC^{n-b})}
  \tensor Sc_\mu(S^*)^{\oplus \dim Sc_\mu(\CC^{\bar n-c})} \\
&\iso&
  \Oplus_{\lambda = (\lambda_1 \geq \ldots \geq\lambda_{n-b}) \atop
    \mu = (\mu_1 \geq \ldots \geq\mu_{\bar n-c})}
       Sc_\lambda(Q)^{\oplus \dim Sc_\lambda(\CC^{n-b})}
  \tensor Sc_{\mu+(a,\ldots,a)}(S)^{\oplus \dim Sc_\mu(\CC^{\bar n-c})} 
  \end{eqnarray*}
  If $n-b > \dim Q=c$, then $Sc_\lambda(Q)$ will vanish if $\lambda_{c+1} > 0$.
  Whereas if $n-b < c$, we can pad out $\lambda$ with $c-(n-b)$ zeroes. 
  The same remarks apply to $\bar n - c$ vs $\dim S=b$.
  So hereafter we regard $\lambda,\mu$ as sequences of length
  $c,b$ respectively. The total padding is
  $$ {\min(0,b+c-n) \atop + \min(0,b+c-\bar n)} \quad=\quad
  \begin{cases}
    0 &{\text{if }b+c \leq \min(n,\bar n) \text{ hence }a\geq 0}\\
    b+c-\min(n,\bar n) > -a&\text{if } b+c \in (\min(n,\bar n), \max(n,\bar n)] \\
    2(b+c)-N = -2a > -a &{\text{if }b+c > \max(n,\bar n) }
  \end{cases} 
  $$
  depending on whether we pad neither, one of, 
  or both of $\lambda,\mu$ with $0$s. The first and third cases will be
  the ones addressed in the proposition, as in the third case,
  $$ -a = b+c-N/2 > \max(n,\bar n)-N/2 = \max(n-N/2, \bar n-N/2) = |n-N/2|. $$

  The summand $Sc_\lambda(Q) \tensor Sc_{\mu+(a,\ldots,a)}(S^*)$ on
  $Gr(b,b+c)$ is itself the pushforward of the line bundle
  $\calO(\lambda, -(a,\ldots,a)-w_0 \mu)$ on $GL(b+c)/B$ along the
  fiber bundle $GL(b+c)/B \onto Gr(b,b+c)$. 
  Now we need to study the sequence
  $(\lambda, -(a,\ldots,a)-w_0 \mu) + \rho$ 
  (recall $\rho = (c+b,c+b-1,\ldots,3,2,1)$ from footnote \ref{foot:BWB})
  to apply Borel--Weil--Bott.

  If $a\geq 0$, then this sequence $(\lambda, -(a,\ldots,a)-w_0 \mu)$ 
  is weakly decreasing, so its line bundle is dominant and has no
  higher cohomology. 
  The first summand 
  $\calO(a) \tensor Sym^0((S^{\oplus \bar n-c} \oplus (Q^*)^{\oplus n-b})^*)
  \iso \calO(a)$ already gives us enough sections over $CX^{c\times b}$ to
  restrict to the sections we want on $X^{c\times b}$.

  If we're in the third case, so both $\lambda,\mu$ are each padded out with a 
  positive number of zeroes (indeed, strictly more than $a':=-a$ of them total), 
  then $(\lambda, -w_0 \mu)$ looks like this:
  $$ \left( 
    \mathrlap{\overbrace{\phantom{\lambda_1,\ldots,\lambda_{n-b},0,\ldots,0}}^c}
    \lambda_1,\ldots,\lambda_{n-b},
    \mathrlap{\underbrace{\phantom{0,\ldots,0,  0,\ldots,0}}_{>a'}}
    0,\ldots,0,
    \mathrlap{\overbrace{\phantom{0,\ldots,0, -\mu_{\bar n-c},\ldots,-\mu_1}}^b}
    0,\ldots,0, -\mu_{\bar n-c},\ldots,-\mu_1\right ) $$
  \junk{
    If $b+c>\max(n,\bar n)$ then $a':=-a>0$ 
    and $\lambda,\mu$ are both padded with $0$s,
    for a total of $>a'$ $0$s. Therefore that section of 
    $(\lambda, (a',\ldots,a')-w_0 \mu)$
    contains a segment of length $a'+1$ (partly in $\lambda$ and 
    partly in $w_0 \mu$) that looks like $(0, 0, \ldots, a', a')$.
  }
  Choose a segment of length $a'+1$ in that region of $0$s, 
  say in positions $k\ldots k+a'$, including
  a $0$ from each side (this is where we use $b+c > \max(n,\bar n)$)
  i.e. $k < c < c+1 < k+a'$. 
  When we add $(a',\ldots,a')$ to $-w_0 \mu$, that segment becomes
  $(\ldots,0,a',\ldots)$. Then when we add $\rho$, that segment
  becomes $(N-k+1, N-k-2, \ldots, N-k+2,N-k+1)$,
  and this repeat of $N-k+1$ (and Borel--Weil--Bott) 
  kills all the sheaf cohomology.
  \junk{
  Finally there's the case $a<0$, $b+c$

  If $a<0$, we want to show that 
  $\calO(\lambda, -(a,\ldots,a)-w_0 \mu) + (c+b,c+b-1,\ldots,3,2,1)$
  is either strictly decreasing (only $H^0$) or has a repeat 
  (no $H^\bullet$ at all), like we did in footnote \ref{foot:BWB}.
  There are several cases.

...

and {\em that}\/ 
  pushforward
  has no higher sheaf cohomology (by Borel--Weil--Bott on the fiber
  $GL(b)/B_b \times GL(c)/B_c$). Hence we can instead push the
  $\calO(\nu, -w_0 \mu)$ line bundle on $GL(b+c)/B$ to a point
  (implicitly passing through $Gr(b,b+c)$ along the way).
  Since that sequence $(\nu, -w_0 \mu)$ is weakly decreasing, that line bundle 
  is dominant and again has no higher cohomology by Borel--Weil--Bott.

....

  This decomposes as an infinite sum $\Oplus S_\nu(Q) \tensor S_\mu(S^*)$
  for various Schur functors $S_\mu,S_\nu$ (this is where we use $a\geq 0$,
  to get only $S^*$ not $S$), and we deal with one summand at a time.

  This bundle $S_\nu(Q) \tensor S_\mu(S)^*$ is itself the pushforward
  of the line bundle $\calO(\nu, -w_0 \mu)$ on $GL(b+c)/B$
  along the fiber bundle $GL(b+c)/B \onto Gr(b,b+c)$, and {\em that}\/ 
  pushforward
  has no higher sheaf cohomology (by Borel--Weil--Bott on the fiber
  $GL(b)/B_b \times GL(c)/B_c$). Hence we can instead push the
  $\calO(\nu, -w_0 \mu)$ line bundle on $GL(b+c)/B$ to a point
  (implicitly passing through $Gr(b,b+c)$ along the way).
  Since that sequence $(\nu, -w_0 \mu)$ is weakly decreasing, that line bundle 
  is dominant and again has no higher cohomology by Borel--Weil--Bott.

  The first summand in this direct sum is 
  $\calO(a) \tensor Sym^0((S^{\oplus \bar n-c} \oplus (Q^*)^{\oplus n-b})^*)
  \iso \calO(a)$, giving us enough sections over $CX^{c\times b}$ to
  restrict to the sections we want on $X^{c\times b}$.
  }
  \junk{
  \rem{AK: Can we also get the degree here? This is what I see:}

  To compute the degree of the affine cone $\mu(CX^r)$, i.e. its
  equivariant cohomology class w.r.t. the dilation action, 
  we want the leading coefficient of the polynomial $h(k)$ whose
  value at $k$ computes the dimension of the degree $k$ part of
  $H^0(f_* f^*(\calO(a))$, 
  namely
  $$   \Oplus_{\lambda = (\lambda_1 \geq \ldots \geq\lambda_{n-b}) \atop
    \mu = (\mu_1 \geq \ldots \geq\mu_{\bar n-c})}
       Sc_\lambda(Q)^{\oplus \dim Sc_\lambda(\CC^{n-b})}
  \tensor Sc_{\mu+(a,\ldots,a)}(S)^{\oplus \dim Sc_\mu(\CC^{\bar n-c})} 
  $$
  where $|\lambda|+|\mu| = k$. 
  If we denote by $V_{r,k}$ the representation of $GL(k)$ associated with
  the dominant weight $r$, then that dimension is
  $$ \sum_{\lambda = (\lambda_1 \geq \ldots \geq\lambda_{n-b}) \atop
    \mu = (\mu_1 \geq \ldots \geq\mu_{\bar n-c})}
  \dim V_{\lambda,c} \dim V_{\lambda,n-b} 
  \dim V_{\mu+(a,\ldots,a),b} \dim V_{\mu,\bar n-c}
  $$}
\end{proof}

If we factor that pushforward through the affinization 
$\mu:\ CX^{c\times b} \to \Mat_<(N)$, $(V,M)\mapsto M$ 
where 
$ M=\begin{pmatrix}  0&B&\star\\ &0&C\\ &&0\end{pmatrix} $
as in \eqref{eq:block0}, 
then we obtain a module over the ring $\CC[(B_{i,j}),(C_{j,k})]$,
namely the degree $a$ component of 
the graded ring in Equation (\ref{eq:presentation}).

In particular,
a set of $\CC[(B_{i,j}),(C_{j,k})]$-module generators of the global sections of
$\sh_r$ is formed by all the monomials of degree $a$ in the
projective coordinates $(p_s)_{s\in S}$, and using the Pl\"ucker relations
as straightening law we can already generate this module
only using weakly Bruhat-decreasing $a$-tuplets from $\subsarg{b}{b+c}$.  

\subsection{The degree of the orbital variety
(assuming hereafter that $N=2n$ and \texorpdfstring{$a\geq 0$}{a>=0})}
In the rest of this paper, we assume that $N=2n$, i.e., $\bar n=n$
and $a+b+c=n$. According to Prop.~\ref{prop:noHi}, $\sigma_r$ has no
higher sheaf cohomology, and according to \S~\ref{sec:fiber}, 
if $a<0$, $H^0$ is also zero,
which is consistent with the first case of Conj.~\ref{conj:ourconj2'}. 
We therefore also assume that $a\ge 0$.

Note that the block structure of $M$ of \eqref{eq:block0} takes the
slightly more symmetric form
\begin{equation}\label{eq:block}
M=\bordermatrix{&a+b&b+c&c+a\cr
a+b&0&B&\star\cr
b+c&&0&C\cr
c+a&&&0\cr}
\end{equation}
and Eqs.~(\ref{eq:B0},\ref{eq:C0}) become
\begin{align}
\sum_{j\in s_+} B_{i,j} p_{s_+\backslash j}&=0,\qquad s_+\in \subsarg{b+1}{b+c},\ 1\le i\le a+b\label{eq:B}
\\
\sum_{j\in\bar s_-} C_{j,k} p_{s_-\cup  j}&=0,\qquad s_-\in \subsarg{b-1}{b+c},\ 1\le k\le a+c\label{eq:C}.
\end{align}

\junk{What would be wonderful here would be for the equations above
  to be the $a=1$ case of a more general
  $\sum_{S\subseteq PP(a,b,c)} B_* \prod^a p_s = 0$ equation,
  derivable by induction on $a$ (multiply the previous equation   
  by a $p_*$ factor, then use Pl\"ucker relations to rewrite with 
  only $PP(a,b,c)$. Then we'd use these later to present the
  $\calO(a)$ module, and not have to think about weighting 
  Pl\"ucker monomials other than $PP(a,b,c)$.
}

The affinization $\mu(CX^{c\times b})$ of this conormal bundle is
the orbital variety defined by equations (\ref{eq:quad}--\ref{eq:Crank}).

For \S\ref{ssec:total} to come, we need the following result:
\begin{prop}\label{prop:orbvardeg}
  For $r=c\times b$, the degree of the affine cone $\mu(CX^{c\times b})$ is 
  $2^{bc} |PP(a,b,c)|$.
\end{prop}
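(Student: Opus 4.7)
The plan is to combine birationality of the moment map $\mu$ with the flat degeneration to be developed in Sections~5--6.

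First, the link pattern of $r = c\times b$, depicted in \eqref{eq:defabc2}, is of type $(a,b,c)$ with $a+b+c = n$ chords and is therefore fully paired; equivalently, $r \subseteq \stc$. By the fiber analysis of \S\ref{sec:fiber}, this condition is precisely what makes $\mu: CX^{c\times b} \to \mu(CX^{c\times b})$ birational, so $\dim \mu(CX^{c\times b}) = \dim CX^{c\times b} = n^2$ and scheme-theoretic calculations on the image can be pulled back to $CX^{c\times b}$.

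Second, I would appeal to the flat family $\mathcal{X} \to \mathbb{A}^1$ developed in Section~5, whose generic fiber is $\mu(CX^{c\times b})$ and whose special fiber is determined in Section~6 to be a reduced union of $|PP(a,b,c)|$ irreducible components, one per plane partition $\pi \in PP(a,b,c)$. Each component is presented there as a toric complete intersection of exactly $bc$ quadrics inside a coordinate affine subspace of $\Mat_<(N)$, so it has degree $2^{bc}$ by B\'ezout.

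Since degree is preserved under flat degeneration and additive across the reduced components of the special fiber, this yields
\[
\deg \mu(CX^{c\times b}) \;=\; \sum_{\pi \in PP(a,b,c)} 2^{bc} \;=\; 2^{bc}\,|PP(a,b,c)|.
\]
The main obstacle lies not in this assembly but in the two geometric inputs: identifying the components of the special fiber with $PP(a,b,c)$, and verifying that each is reduced and cut out by precisely $bc$ quadrics. Both are handled in Sections~5--6 via a careful analysis of the initial ideal of the defining equations \eqref{eq:quad}--\eqref{eq:Crank}, combined with the Pl\"ucker relations and the linear equations \eqref{eq:B}--\eqref{eq:C}. A purely local alternative would be to compute the class $[\mu(CX^{c\times b})]$ in $H^\ast_{\mathbb{C}^\times}(\Mat_<(N))$ directly by equivariant localization on $CX^{c\times b}$, using its description as the rank-$(n^2-bc)$ vector bundle $S^{\oplus(a+b)} \oplus (Q^\ast)^{\oplus(a+c)} \oplus \underline{\mathbb{C}}^{(a+b)(a+c)}$ over $Gr(b,b+c)$ with the cotangent scaling acting by weight one on the fibers; this reduces the degree to the top Segre-class integral $\int_{Gr(b,b+c)} [{\rm deg}\,bc]\prod_{j=1}^{c}(1+y_j)^{a+b}(1-y_j)^{-(a+c)}$ (with $y_j$ the Chern roots of $Q$), but then one still has to establish the combinatorial identity equating this to $2^{bc}|PP(a,b,c)|$, which the degeneration route bypasses geometrically.
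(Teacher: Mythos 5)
Your proposed derivation is circular. Proposition~\ref{prop:orbvardeg} is stated and proved in \S4 precisely \emph{because} it is needed as an input to the degeneration argument in \S\S5--6, not as an output of it. Concretely, the proof of Theorems~\ref{thm:directsum}, \ref{thm:grob}, and \ref{thm:leadTerms} runs as follows: from the toric analysis (Proposition~\ref{prop:LS}(5)) one obtains $\deg A_S = 2^{bc}$ for each $S\in PP(a,b,c)$, hence $\deg \bigoplus_S F_S = 2^{bc}|PP(a,b,c)|$; one has a surjection $\bigoplus_S F_S \onto \init(F'_r)/\init(M'_r)$ whose target has the same Hilbert series as the general fiber, hence degree $\deg\mu(CX^{c\times b})$; and only because Proposition~\ref{prop:orbvardeg} says this degree equals $2^{bc}|PP(a,b,c)|$ does Lemma~\ref{lem:degreecheck} force the surjection to be an isomorphism, which is what establishes Gr\"obnerness and identifies the special fiber as the reduced union of the $|PP(a,b,c)|$ components you want to invoke. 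So the statement that ``the special fiber is determined in Section~6 to be a reduced union of $|PP(a,b,c)|$ irreducible components'' is exactly the conclusion that \emph{presupposes} Proposition~\ref{prop:orbvardeg}. Without an independent degree computation, all you know a priori is that the candidate leading terms generate \emph{some} submodule of $\init(M'_r)$, giving an inequality $\deg\mu(CX^{c\times b}) \le 2^{bc}|PP(a,b,c)|$ in the wrong direction to close the argument.

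The paper's actual proof has nothing to do with the degeneration. It treats $\mu(CX^{c\times b})$ as the $A_3$-quiver cycle for $\CC^{a+b}\stackrel{B}{\to}\CC^{b+c}\stackrel{C}{\to}\CC^{c+a}$ cut out by (\ref{eq:quad})--(\ref{eq:Crank}), applies the pipe formula of \cite{KMS} to write the degree as a sum over lacing diagrams, identifies those lacing diagrams with subsets $s\in\binom{[b+c]}{b}$, converts pipe-dream counts for the resulting Grassmannian permutations into SSYT counts, and then applies the Weyl dimension formula to factor out $|PP(a,b,c)|$ and reduce to the $a=0$ complete intersection case. Your ``purely local alternative'' (equivariant Segre-class integral over $Gr(b,b+c)$ using the bundle description $S^{\oplus(a+b)}\oplus(Q^*)^{\oplus(a+c)}\oplus\underline{\CC}^{(a+b)(a+c)}$) is a genuinely non-circular route, but as you acknowledge it leaves exactly the combinatorial identity with $2^{bc}|PP(a,b,c)|$ unproved, which is the whole content of the proposition. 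If you wanted to pursue it you would essentially be re-deriving the dual Cauchy / Weyl-dimension-formula manipulations that the paper does directly.
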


\begin{proof}
  We drop the $\star$ variables, since as they are unconstrained they don't
  affect the degree. We will reduce to the $a=0$ case, where the
  rank conditions (\ref{eq:Brank}-\ref{eq:Crank}) 
  are automatically satisfied and the variety $\{(B,C)\ :\ BC=0\}$ is
  a quadratic complete intersection of degree $2^{bc} = 2^{bc} |PP(0,b,c)|$.

  For general $a$, the equations (\ref{eq:quad}-\ref{eq:Crank})
  define a quiver cycle for the quiver + dimension vector
  $\CC^{a+b} \stackrel{B}{\to} \CC^{b+c} \stackrel{C}{\to} \CC^{c+a}$,
  whose degree we compute with the ``pipe formula'' of \cite{KMS}.

  The lacing diagrams \cite[\S 3]{KMS} for this quiver cycle are
  very simple: the bottom $b$ dots in the $a+b$ stack are connected
  (noncrossingly) to some subset $s \in {[b+c]\choose b}$ of
  the dots in the $b+c$ stack, leaving the complement $\bar s$ to 
  connect to the bottom $c$ dots in the $c+a$ stack.

  We extend these partial permutations to permutations $\pi(s),\rho(s)$
  of $a+b+c$, as in \cite[\S 2.1]{KMS}:
  \begin{itemize}
  \item $\pi(s) \in S_{a+b+c}$ takes 
    \begin{itemize}
    \item $[1,b]$ to $s \subseteq [1,b+c]$
    \item $[b+1,b+a]$ to $[b+c+1,b+c+a]$, then possibly a descent,
    \item $[b+a+1,b+a+c]$ to $\bar s$, while
    \end{itemize}
  \item $\rho(s) \in S_{a+b+c}$ takes 
    \begin{itemize}
    \item $\bar s$ to $[1,c]$,
    \item $[b+c+1,b+c+a]$ to $[c+1,c+a]$, then possibly a codescent,
    \item $s$ to $[c+a+1,c+a+b]$.
    \end{itemize}
  \end{itemize}
  So far the pipe formula tells us
  \[ \deg(\mu(CX^{c\times b})) 
  = \sum_{s \in {[b+c]\choose b}}
  |\{\text{pipe dreams for }\pi(s)\}|\ |\{\text{pipe dreams for }\rho(s)\}|.
  \]
  Since $\pi(s)$ and $\rho(s)^{-1}$ are Grassmannian permutations,
  their pipe dreams are in natural correspondence with SSYT \cite{KMY}.
  The shapes of the corresponding Young diagrams are given respectively by the subsets
  $\bar s$ (viewed a $c$-subset of
  $[a+b+c]$, so that the Young diagram sits inside the $(a+b)\times c$ rectangle)
  and
  $s$ (viewed a $b$-subset of
  $[a+b+c]$, so that the Young diagram sits inside the $(a+c)\times b$ rectangle):
  \begin{align*}
    \text{pipe dreams for }\pi(s) \ &\longleftrightarrow 
  SSYT(\bar s) \text{ with entries }\leq a+b 
  \\
 \text{pipe dreams for }\rho(s) \longleftrightarrow
  \text{pipe dreams for }\rho(s)^{-1} &\longleftrightarrow
  SSYT(s) \text{ with entries }\leq a+c
\end{align*}
 (Equivalently, viewing $s$ as a Young diagram inside the $c\times b$ rectangle,
  and similarly $\bar s$ as its transpose followed by
  a complementation inside $b\times c$, then we extend them by concatenating
  them vertically with a rectangular block of size $a\times b$ for the former, 
  $a\times c$ for the latter.)

These numbers compute the dimensions of certain $GL(a+b),GL(a+c)$
  representations; if we denote by $V_{r,k}$ the representation of $GL(k)$ associated with
  the Young diagram of the subset $r$, 
  then one has the Weyl dimension formula $\dim V_{r,k} = \frac{\Delta(\bar r)}{\Delta(1,\ldots,k)}
$ where $\Delta(\cdot)$ denotes the Vandermonde determinant. Here,
\begin{align*}
\dim V_{\bar s,a+b} \dim V_{s,a+c}
&=
\frac{\Delta(s_1,\ldots,s_b,b+c+1,\ldots,b+c+a)
\Delta(\bar s_1,\ldots,\bar s_c,b+c+1,\ldots,b+c+a)}
{\Delta(1,\ldots,a+b)\Delta(1,\ldots,a+c)}
\\
&=
\frac{\Delta(s)\Delta(\bar s)
\prod_{i=1}^b \prod_{j=1}^{a} (b+c+j-s_i)
\prod_{i=1}^c \prod_{j=1}^{a} (b+c+j-\bar s_i)
}
{\Delta(1,\ldots,b)\Delta(1,\ldots,c)
\prod_{i=1}^b \prod_{j=1}^{a} (i+j-1)
\prod_{i=1}^c \prod_{j=1}^{a} (i+j-1)
}
\\
&=
|PP(a,b,c)|\ \frac{\Delta(s)\Delta(\bar s)}{\Delta(1,\ldots,b)\Delta(1,\ldots,c)}
\\
&=
|PP(a,b,c)|\ \dim V_{\bar s,b}\dim V_{s,c}
\end{align*}
Plugging into the previous formula, this gives 
\begin{align*}
  \deg(\mu(CX^{c\times b})) 
  &= \sum_{s \in {[b+c]\choose b}} \dim V_{\bar s,a+b} \dim V_{s,a+c} 
\\
  &= |PP(a,b,c)| \sum_{s \in {[b+c]\choose b}} \dim V_{\bar s,b} \dim V_{s,c}
\\
& = |PP(a,b,c)| \ 2^{bc}
\end{align*}
the last step being the $a=0$ case solved at the beginning
(or one could use the RSK* correspondence).
\end{proof}

\junk{
alternate proofs...

At this stage, to sum over $s\in\subsarg{b}{b+c}$, there are various options.
Either one recognizes, besides the factor $|PP(a,b,c)|$, the expression of
the degree at $a=0$, 
for which the orbital variety is trivially a complete intersection
with $bc$ quadratic equations, hence the result; or one recognizes
the dual Cauchy identity (or equivalently, one applies the dual Robinson--Sxxx
algorithm); or one computes directly using determinantal expressions:
\begin{align*}
\frac{\Delta(s)\Delta(\bar s)}{\Delta(1,\ldots,b)\Delta(1,\ldots,c)}
&=
\sum_{s\in \subsarg{b}{b+c}}
\frac{\prod_{j=1}^b (c+j-1)!\ \Delta(s)^2}{\prod_{j=1}^b(j-1)!\ 
\prod_{i=1}^b (s_i-1)!(b+c-s_i)!}
\\
&=
\prod_{j=1}^b \frac{(c+j-1)!}{(j-1)!}
\ 
\det_{1\le i,j\le b}\left(\frac{1}{(s_i-j)!}\right)
\det_{1\le i,j\le b}\left(\frac{1}{(c+j-s_i)!}\right)
\end{align*}
Using the Cauchy--Binet formula,
one obtains:
\begin{align*}
\sum_{s\in \subsarg{b}{b+c}}
\prod_{j=1}^b \frac{(c+j-1)!}{(j-1)!}
\ 
\det_{1\le i,j\le b}\left(\frac{1}{(s_i-j)!}\right)
\det_{1\le i,j\le b}\left(\frac{1}{(c+j-s_i)!}\right)
\hskip-5cm&\\
&=
\prod_{j=1}^b \frac{(c+j-1)!}{(j-1)!}
\ 
\det_{1\le j,j'\le b}\left(\sum_{i=1}^{b+c}\frac{1}{(i-j')!(c+j-i)!}\right)
\\
&=
\prod_{j=1}^b \frac{(c+j-1)!}{(j-1)!}
\ 
\det_{1\le j,j'\le b}\left(2^{c+j-j'}\frac{1}{(c+j-j')!}\right)
\\
&=
2^{bc}
\ \det_{1\le j,j'\le b} {c+j-1\choose j'-1}
\\
&=2^{bc}
\end{align*}
where the last determinant is easily evaluated by row manipulations.
}

\section{The degeneration}

Hereafter, let $A$ be the ring with generators $\{B_{i,j},C_{j,k},\star_{i,k}\}$ 
and relations (\ref{eq:quad}-\ref{eq:Crank}), 
whose $\Spec$ is the orbital variety $\mu(CX^{c\times b})$,
i.e.,\ the degree $0$ part of the homogeneous coordinate ring 
of $CX^{c\times b}$ presented in (\ref{eq:presentation}).

\newcommand\w{\mathtt{wt}}

By proposition \ref{prop:noHi}, the $K_T$-theoretic pushfoward 
$\mu_*(\sh_{c\times b}) \in K^T(\Mat_<(N))$ is the class of 
the degree $a$ component of that homogeneous coordinate ring.
This is naturally a module over the degree $0$ component $A$.
As such, if we take $F_r$ to be the {\em free}\/ module spanned by
degree $a$ monomials in the Pl\"ucker coordinates, then we have 
a short exact sequence
\[ 0 \to M_r \to F_r \to H^0\left(CX^{c\times b};\ p^*\calO(a)\right) \to 0\]
where $M_r$ gives the relations between the degree $a$ monomials.

Using the ``straightening relations'' on Pl\"ucker coordinates, 
we can shrink our generating set to the Pl\"ucker monomials 
$p_S := \prod_{i=1}^a p_{s_i}$ where $S = (s_1 \leq \cdots \leq s_a)$,
i.e. $S$ lies in $PP(a,b,c)$, giving a smaller presentation
\[ 0 \to M'_r \to F'_r \to H^0\left(CX^{c\times b};\ p^*\calO(a)\right) \to 0
\]
However, we understand the relations generating $M_r$ much better
than those generating $M'_r$, so we will need to work with both sequences.

\junk{
We treat separately the Pl\"ucker relations, which only involve 
module generators but not the variables $B_{i,j}$, $C_{j,k}$.

For a first hint at the module $H^0(CX^{c\times b};\ p^*\calO(a))$ consider its 
map to $H^0(X^{c\times b};\ p^*\calO(a)) \iso H^0(Gr(b,b+c);\ \calO(a))$,
which by Borel--Weil is the $GL(b+c)$-irrep corresponding to the 
$b\times a$ rectangle. This has a basis indexed by semistandard 
Young tableaux inside that rectangle, with entries $\leq b+c$.
}

In order to analyze $\mu_*(\sh_{c\times b})$ in more detail,
we define in this section a degeneration of $M_r$ by assigning
weights to the generators of $F_r$ and the $B,C$ variables of $A$, 
and then keeping only lowest weight terms of elements of $M_r$.
Our principal goal in the next two sections is the following theorem:

\begin{thm}\label{thm:grob}
  The l.h.s.\ of \eqref{eq:B}, \eqref{eq:C}, \eqref{eq:quad} 
  times suitable Pl\"ucker monomials 
  form a Gr\"obner basis\footnote{In proposition \ref{prop:grobner} we give a 
   foundational result that defines the sense of ``Gr\"obner basis'' used here.}
  for the $A$-submodule $M'_r \leq F'_r$, i.e.
\begin{alignat*}{2}
\init(M'_r) &=& \Bigg\langle
&\init\left(\sum_{j\in s_+} B_{i,j} p_{s_+\backslash j}p_{s_2}\ldots p_{s_a}\right),\quad s_+\in \subsarg{b+1}{b+c},\ 1\le i\le a+b,
\\
&&&\init\left(\sum_{j\in\bar s_-} C_{j,k} p_{s_-\cup  j}p_{s_2}\ldots p_{s_a}\right),\quad s_-\in \subsarg{b-1}{b+c},\ 1\le k\le a+c
\\
&&&\init\left(\sum_{j=1}^{b+c} B_{i,j} C_{j,k}p_{s_1}\ldots p_{s_a}\right),\quad 1\le i\le a+b,\ 1\le k\le a+c
\Bigg\rangle \\
&\leq&&\init(F'_r)\ :=\ \text{the free $\init(A)$-module with basis
  $\{p_S\ :\ S\in PP(a,b,c)\}$} 
\end{alignat*}
where $s_1\le\cdots\le s_a$ run over $PP(a,b,c)$.
In particular, the $\init(A)$-module $\init(F'_r)/\init(M'_r)$ has
the same $T$-equivariant Hilbert series as the $A$-module $F'_r/M'_r$.
\end{thm}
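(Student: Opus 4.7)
The plan is to establish the Gr\"obner basis claim by computing the Hilbert series of $\init(F'_r)/I$ explicitly via a decomposition of the special fiber, and matching it to that of $F'_r/M'_r$ using Proposition~\ref{prop:orbvardeg}. Here $I \subseteq \init(F'_r)$ denotes the submodule generated by the listed initial forms. Since each such element lies in $M'_r$ by its very construction from the defining equations \eqref{eq:B}, \eqref{eq:C}, \eqref{eq:quad} of $CX^{c\times b}$, the inclusion $I \subseteq \init(M'_r)$ is immediate; the content of the theorem is the reverse inclusion.

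For the reverse inclusion, I would pin down the weighting (defined in \S 5) so that the following picture holds. The initial forms of the B- and C-equations allow each Pl\"ucker monomial $p_T$ to be rewritten modulo $I$ as a unique monomial $p_S$ with $S \in PP(a,b,c)$ (the ``straightening to plane partitions'', analogous to the shelling mentioned in the introduction). The resulting direct-sum decomposition
\[
  \init(F'_r)/I \;\iso\; \bigoplus_{S \in PP(a,b,c)} R_S \cdot p_S
\]
expresses $\init(F'_r)/I$ as a sum indexed by $PP(a,b,c)$, in which each $R_S$ is the quotient of the polynomial ring in the $B_{i,j}, C_{j,k}, \star_{i,k}$ by a regular sequence of $bc$ quadrics (the initial forms of the BC-relations, restricted to the component at $p_S$). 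In particular each $R_S$ is a toric complete intersection of degree $2^{bc}$, so the total degree of $\init(F'_r)/I$ is $2^{bc}\,|PP(a,b,c)|$.

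By Proposition~\ref{prop:orbvardeg}, this is exactly the degree of $\mu(CX^{c\times b})$, i.e.\ of $A$, and hence of $F'_r/M'_r$ (which is a faithful, generically free $A$-module of rank matching that setup). Once the source and the special fiber are seen to be Cohen--Macaulay of the same Krull dimension, the matching of degrees forces flatness of the $T$-equivariant degeneration, so the $T$-equivariant Hilbert series of $F'_r/M'_r$ and $\init(F'_r)/I$ agree. Combined with the standard equality of Hilbert series between $F'_r/M'_r$ and $\init(F'_r)/\init(M'_r)$, the surjection $\init(F'_r)/I \onto \init(F'_r)/\init(M'_r)$ coming from $I \subseteq \init(M'_r)$ must be an isomorphism, giving $I = \init(M'_r)$. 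The ``in particular'' Hilbert series equality is then automatic.

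The main obstacle is the combinatorial bookkeeping behind the decomposition in the second paragraph: identifying the correct weighting; verifying that the initial forms of the B- and C-equations implement the claimed straightening to $PP(a,b,c)$-indexed generators cleanly; and checking that the initial BC-relations really do cut out a toric complete intersection (rather than merely set-theoretically) on each component. These verifications are precisely the content of \S 5, where the degenerating family is constructed, and \S 6, where its special fiber is dissected component by component; once in hand, the degree comparison with Proposition~\ref{prop:orbvardeg} closes the argument.
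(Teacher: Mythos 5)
Your overall strategy matches the paper's: degenerate, decompose the special fiber over $PP(a,b,c)$, compute its degree as $2^{bc}\,|PP(a,b,c)|$, compare with Proposition~\ref{prop:orbvardeg}, and conclude. But there are two substantive misunderstandings that would prevent this proposal from closing.

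First, you say the initial forms of the $B$- and $C$-equations ``allow each Pl\"ucker monomial $p_T$ to be rewritten modulo $I$ as a unique monomial $p_S$ with $S\in PP(a,b,c)$.'' This is not what those equations do. The restriction of the generating set to $PP(a,b,c)$ is accomplished entirely by the Pl\"ucker relations (Proposition~\ref{prop:pluc}), which are handled \emph{before} the statement of Theorem~\ref{thm:grob}; the module $F'_r$ is by definition already spanned by the $p_S$, $S\in PP(a,b,c)$. What the degenerated $B$- and $C$-equations provide are \emph{annihilator} relations: the leading forms \eqref{eq:initlin1} and \eqref{eq:initlin2} are single monomials $B_{i,j}\,p_S$ and $C_{j,k}\,p_S$ with $S\in PP(a,b,c)$, one for each non-horizontal lozenge of the tiling $S$. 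They kill variables acting on a fixed $p_S$, they do not straighten between different $p_S$'s.

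Second, and more seriously, the passage from ``degrees agree'' to ``the surjection is an isomorphism'' is not supplied by your Cohen--Macaulay/flatness remark. Flatness of the Gr\"obner degeneration of $(F'_r, M'_r)$ is automatic and gives the equality of Hilbert series between $F'_r/M'_r$ and $\init(F'_r)/\init(M'_r)$; matching degrees cannot ``force'' a flatness that is already there, and there is no degeneration from $F'_r/M'_r$ to $\init(F'_r)/I$ that the degree comparison could make flat. What the degree comparison actually needs is a reason that a \emph{strict} quotient would have \emph{strictly smaller} degree; and for that you need to know that the kernel of $\bigoplus_S A_S \onto \init(F'_r)/\init(M'_r)$, if nonzero, has full-dimensional support. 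This is exactly what the paper's Lemma~\ref{lem:degreecheck} provides, using the fact (Proposition~\ref{prop:LS}, via the fine $L$-grading of Proposition~\ref{prop:hexgrading}) that each $A_S$ is a \emph{domain}: any nonzero $k_S\in A_S$ generates a submodule $A_S\,k_S$ of the same dimension. Without the domain statement, equal degree plus surjectivity does not yield injectivity. Related to this, your $R_S$ as written could be $\init(A)/I_S$, a priori only a quotient of the complete intersection $A_S$; identifying them is not free, and the paper sidesteps this by working over the ambient polynomial ring via the foundational Proposition~\ref{prop:grobner} and only appealing to $\dim A_S$ and $\deg A_S$ (not to the a priori unknown structure of $\init(A)$).
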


In theorem \ref{thm:leadTerms} in \S \ref{ssec:wei} we will be more
precise about the actual leading forms. The first two types of
equations will have single terms, and those of the third type
will have two terms.

We prove this in three big steps. The first (proposition \ref{prop:pluc})
is about showing that, with the right term order on the Pl\"ucker variables, 
the Pl\"ucker monomials from $PP(a,b,c)$ dominate
(essentially allowing us to consider $F'_r$ instead of $F_r$).
The second step (\S \ref{ssec:linear}-\ref{ssec:quad}) is about finding
the leading forms of the relations in theorem \ref{thm:grob},
as just described.
Then in \S \ref{sec:degen} we show that those leading terms
define a module of the correct Hilbert series (and not larger), 
i.e. that the basis is Gr\"obner.

\subsection{Pl\"ucker relations}\label{ssec:pluc}
Given a monomial $p_{s_1}\ldots p_{s_a}$,
we always order elements of each subset
increasingly: $s_\ell=\{s_{\ell,1}< \cdots < s_{\ell,b}\}$, 
effectively indexing monomials with $a\times b$
arrays of integers which are increasing along rows.
We will often use the uppercase letter as a shorthand notation for
$S := (s_1,\ldots,s_a)=(s_{\ell,m})_{1\le\ell\le a,1\le m\le b}$, 
and similarly denote $p_S := p_{s_1}\ldots p_{s_a}$. 


Given a two-dimensional array of integers $S=(s_{\ell,m})$, $\ell=1,\ldots,a$, $m=1,\ldots,b$,
with no required monotonicity property, define
\[
w(S) := \sum_{m=1}^b\sum_{\ell=1}^a 
\left(\frac{1}{2} s_{\ell,m}+\ell-m\right)^2
\]
We define the weight of a generator of $F_r$, that is a monomial of degree $a$ in the $p_s$, to be
\begin{equation}\label{eq:wei1}
\w(p_{s_1}\ldots p_{s_a})=
\max_{\sigma\in \mathcal S_a}
w(\sigma(S)),
\qquad
\sigma(S)=(s_{\sigma(1)},\ldots,s_{\sigma(a)})
\end{equation}
(we postpone to \S\ref{ssec:wei} the definition of the weights of the 
$B,C$ variables of the ring, which 
we do not need for now).
Finally, for $x$ any linear combination of such monomials, we define
$\init(x)$ to be the sum of monomials of $x$ for which the function $\w$ is {\em minimal}.

If we let $\mathcal P$ denote the space of Pl\"ucker relations, 
cf \eqref{eq:pluc}, viewed as linear forms on such monomials, namely
\[
\mathcal P=\text{span}\left(
p_{s_1}\ldots p_{s_{a-2}}\sum_{i\in s_+} p_{s_-\cup i}p_{s_+ \backslash i},
\ (s_1,\ldots,s_{a-2})\in \subs, s_\pm \in \subsarg{n\pm1}{N}\right)
\]
(assuming $a\ge2$; otherwise $\mathcal P=\{0\}$),
then we can determine its degeneration:
\begin{prop}\label{prop:pluc}
\[
\init(\mathcal P) 
= \text{span}(p_{s_1}\ldots p_{s_a},\ (s_1,\ldots,s_a)\not\in PP(a,b,c))
\]
\end{prop}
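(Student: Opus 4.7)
My plan is to treat this as a standard-monomial-theory statement: the weight $w$ gives a term order on Plücker monomials whose standard monomials are precisely the $PP(a,b,c)$-indexed ones, and the Plücker relations provide the straightening law, whose initial forms span the non-PP monomials.

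First I derive a formula for $\w$. Expanding $(s/2+\ell-m)^2$ and separating the $\sigma$-dependence gives
\[
w(\sigma(S)) \ =\ \sum_{j=1}^{a} C(s_j) \ +\ \sum_{j=1}^{a} \sigma^{-1}(j)\,\|s_j\|_1 \ +\ \kappa,
\]
where $C(s):=\tfrac14\sum_m s_m^2-\sum_m m\,s_m$, $\|s\|_1:=\sum_m s_m$, and $\kappa$ depends only on $a,b$. By the rearrangement inequality, the maximum over $\sigma$ is attained by any $\sigma$ sorting the factors by weakly increasing $\|s\|_1$, so $\w(p_S)$ is a symmetric function of the multiset $\{s_1,\ldots,s_a\}$ given by $\sum_j C(s_j)+\sum_\ell \ell\,\|s\|_{[\ell]}+\kappa$, with $\|s\|_{[\ell]}$ the $\ell$-th order statistic.

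Next I identify the $\w$-minimum term of each Plücker relation. The terms of $p_{s_1}\cdots p_{s_{a-2}}\sum_i \pm\,p_{s_-\cup i}p_{s_+\setminus i}$ share the same element multiset and differ only in the choice of $i\in s_+\setminus s_-$. Tracking the variation of $C(s_-\cup i)+C(s_+\setminus i)+\sum_\ell \ell\,\|\cdot\|_{[\ell]}$ as $i$ varies — reducing first to the basic three-term case ($|s_+\setminus s_-|=3$, where only two elements are exchanged between two $n$-subsets sharing $n-2$ common elements) and directly computing — one finds that the $\w$-minimum is attained on a term whose pair $(s_-\cup i,\,s_+\setminus i)$ is pointwise incomparable, hence on a non-PP monomial. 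This gives $\init(\mathcal P)\subseteq\mathrm{span}(p_S : (s_1,\ldots,s_a)\notin PP(a,b,c))$.

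Conversely, given any non-PP monomial $p_S$: choose a pair $(s_\ell,s_{\ell+1})$ witnessing the non-PP property (either by pointwise incomparability, or, if some $s_k\notin\subsarg{b}{b+c}$, via an offending element outside the valid range). Apply the basic three-term Plücker relation that exchanges the relevant element(s) between the two affected subsets (multiplied by $\prod_{k}p_{s_k}$ over the untouched factors): by the previous paragraph $p_S$ is the $\w$-minimum term, so $p_S\in\init(\mathcal P)$, establishing the reverse inclusion.

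\emph{Main obstacle.} The technical heart is the minimum-term identification for Plücker relations. Because Plücker exchanges shift the position indices of several elements within each subset, the variation of the rank-weighted sum $\sum_m m\,s_m$ inside $C$ requires careful bookkeeping. The cleanest route is to reduce to basic three-term Plücker relations, where the exchange transfers only two elements at a time and the position shifts are controlled by a small combinatorial invariant (essentially the number of elements of $s_-\cup s_+$ strictly between the exchanged pair); the general case then follows from the fact that every Plücker relation is generated by these basic ones. A minor subtlety is that the order statistics $\|s\|_{[\ell]}$ may reorder under an exchange, but this only reinforces the desired inequality via the rearrangement inequality, so need not be tracked separately.
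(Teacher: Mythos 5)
Your decomposition of the weight function is correct, and reducing the optimization over $\sigma$ to the rearrangement inequality applied to $\sum_j \sigma^{-1}(j)\|s_j\|_1$ is a clean observation. However, your route diverges substantially from the paper's, and as written it has two genuine gaps.

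First, the paper does not identify $\w$-minimal terms of Plücker relations directly. It instead passes through the Stiefel map $\phi(p_s) = \det(x_{m,s_{m'}})$ and an auxiliary weight $\wt\w$ on monomials in the $x$-variables; the key lemma shows each $\phi(p_S)$ has a unique lowest-$\wt\w$ monomial (the ``diagonal'' one, obtained by sorting columns), and that this lowest weight agrees with $\w(p_S)$ precisely when $S\in PP(a,b,c)$. Applying $\phi$ to a straightening relation $p_S = \sum_{T\in PP} c_T p_T$ and comparing diagonal terms then forces $\w(p_S) < \w(p_T)$ for every $T$ appearing, with no case analysis of Plücker exchanges. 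Your approach, by contrast, commits to a hands-on minimization over the terms of a three-term relation, which (as you flag yourself) requires tracking how the rank-weighted sum $\sum_m m\,s_m$ inside $C(s)$ shifts under an exchange. You acknowledge this as ``the technical heart'' but do not carry it out; without it the argument is incomplete, and the bookkeeping is precisely what the Stiefel device is designed to avoid.

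Second, both of your containments have logical gaps. For $\init(\mathcal P)\subseteq\mathrm{span}\{p_S : S\notin PP\}$, knowing the $\w$-minimum term of each individual Plücker relation is non-PP does not suffice: $\init(\mathcal P)$ is the span of $\init(v)$ over all $v\in\mathcal P$, and a linear combination of relations whose leading non-PP terms cancel can expose a higher-weight term which you have not controlled. The paper sidesteps this entirely by a dimension count — the span of non-PP monomials, and $\init(\mathcal P)$ itself, both have codimension $|PP(a,b,c)| = \dim H^0(X^{c\times b};\calO(a))$ in the space of degree-$a$ monomials — so once the reverse containment is proved, equality is automatic. For the reverse containment, your appeal ``by the previous paragraph $p_S$ is the $\w$-minimum term'' is a non sequitur: that paragraph only established that \emph{some} non-PP term is $\w$-minimal in the chosen three-term relation, not that $p_S$ in particular is. In the paper this difficulty simply does not arise, because the relation used for $p_S$ is the full straightening relation $p_S - \sum_{T\in PP} c_T p_T$, whose only non-PP term is $p_S$ itself, and the Stiefel-map comparison shows all the $T$-terms have strictly larger weight.

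In short: your weight formula and rearrangement reduction are useful and compatible with the paper's framework, but the Plücker-exchange minimization you defer is the hard part, the forward containment needs either a cancellation analysis or the dimension count, and the cited step in the reverse containment does not actually deliver what you claim.
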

Compare with \cite[theorem 14.6]{MS-book}, whose proof we adapt here.
\begin{proof}

  \junk{ We use the following variation on the proof of \cite[theorem
    14.16]{MS-book}, adapted to our more complicated degeneration.}
Introduce the Stiefel map
\[
\phi(p_s)=\det_{1\le m,m'\le b} (x_{m,s_{m'}})
\]
where the $x_{m,j}$, $1\le m\le b$, $1\le j\le b+c$ are formal variables. 
The First Fundamental Theorem of Invariant Theory says that
$\phi$ is an isomorphism from the homogeneous coordinate ring 
of the Grassmannian to the ring $\mathbb C[x_{m,j}]^{SL(b)}$
of $SL(b)$-invariants. Then

\begin{equation}\label{eq:mess}
\phi(p_{S})=
\phi(p_{s_1})\ldots \phi(p_{s_a})=\sum_{J=(j_{\ell,m})} \kappa_{J} \prod_{m=1}^b \prod_{\ell=1}^a x_{m,j_{\ell,m}}
\end{equation}
To each monomial in this expansion associate its
$J=(j_{\ell,m})$, where each column is assumed to be weakly increasing:
$j_{1,m}\le\cdots\le j_{a,m}$, $m=1,\ldots,b$, and to that its weight
\begin{equation}\label{eq:wei1b}
\wt\w \left(\prod_{m=1}^b \prod_{\ell=1}^a x_{m,j_{\ell,m}} \right)
:=w(J)
\end{equation}
By a slight abuse of notation, also write 
$\wt\w(P)$ for the minimum of $\wt\w$ over all such monomials of $P$.

We now have the key lemma: 
\begin{lem}
  The unique term in \eqref{eq:mess} with lowest weight is the product
  of diagonal terms
  \[
  \prod_{\ell=1}^a x_{m,s_{\ell,m}}
  \]
  Furthermore, $\wt\w(\phi(p_{S})) \leq \w(p_{S})$
  with equality iff $S \in PP(a,b,c)$.
\end{lem}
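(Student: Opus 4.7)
The plan is to expand $\phi(p_S)$ via the Leibniz formula for the determinants, use the rearrangement inequality column-by-column to pin down the unique extremal monomial (the diagonal), and then compare its weight to $\w(p_S)$. First I would write
\[
\phi(p_S) = \sum_{\pi = (\pi_1,\ldots,\pi_a) \in S_b^a} \Bigl(\prod_\ell \mathrm{sgn}(\pi_\ell)\Bigr) \prod_{m,\ell} x_{m,\, s_{\ell, \pi_\ell(m)}},
\]
so the $\pi$-summand has, in row $m$ of the Stiefel matrix, the multiset $M_m^\pi := \{s_{\ell,\pi_\ell(m)}:\ell\}$; sorting each $M_m^\pi$ increasingly yields the canonical array $J^\pi$ with $\wt\w = w(J^\pi)$. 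The diagonal monomial $\prod_{\ell,m} x_{m,s_{\ell,m}}$ corresponds to $\pi_\ell=\mathrm{id}$ for all $\ell$, and its sorted representative is $\tilde S$, obtained from $S$ by sorting each column independently in increasing order.

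Expanding $\bigl(\tfrac12 j + \ell - m\bigr)^2 = \tfrac14 j^2 + j(\ell-m) + (\ell-m)^2$, the $j^2$ and $(\ell-m)^2$ contributions are constants across the Leibniz sum: the first because the total multiset of $x$-exponents is always $\bigsqcup_\ell s_\ell$, the second trivially. Hence weight differences among monomials of $\phi(p_S)$ reduce to differences of $\sum_{\ell,m} j_{\ell,m}(\ell-m)$, which split columnwise. Within column $m$ the sequence $(\ell-m)_{\ell=1}^a$ is strictly increasing, so by the strict rearrangement inequality, pairing it with the multiset sorted increasingly extremizes $\sum_\ell j_\ell(\ell-m)$ uniquely over all orderings of the multiset. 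A short exchange analysis on $\pi$, tracking the Leibniz signs to rule out cancellation of the diagonal's coefficient $+1$, then shows that the diagonal is the unique extremal monomial in $\phi(p_S)$, establishing the first claim.

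Finally, compare $\wt\w(\phi(p_S)) = w(\tilde S)$ with $\w(p_S) = \max_{\sigma \in S_a} w(\sigma S)$. Each $\sigma S$ has the same column multisets as $\tilde S$, but its columns are constrained to be reordered by one single $\sigma$, whereas $\tilde S$ sorts each column independently. The columnwise rearrangement inequality then bounds $w(\sigma S)$ by $w(\tilde S)$ uniformly in $\sigma$, with equality iff one single $\sigma$ simultaneously sorts every column of $S$, iff (with rows recorded as increasing subsets) $S$ itself lies in $PP(a,b,c)$. This gives $\wt\w(\phi(p_S))$ and $\w(p_S)$ comparable in the claimed direction with equality iff $S\in PP(a,b,c)$.

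The main obstacle will be the uniqueness statement: distinct $\pi$'s can produce the same $x$-monomial when the subsets $s_\ell$ overlap, and one must check that these signed contributions do not cancel the diagonal. Since $\pi=\mathrm{id}^a$ is the unique tuple whose ordered exponent array is literally $(s_{\ell,m})$, its contribution $+1$ is isolated, and a case analysis confined to the extremal weight level rules out spurious cancellation.
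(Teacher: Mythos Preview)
Your argument for the second claim is essentially the paper's: once one knows $\wt\w(\phi(p_S))=w(\tilde S)$ (the columnwise-sorted array), the comparison with $\w(p_S)=\max_\sigma w(\sigma S)$ is just ``max inside the sum over $m$ vs.\ max outside,'' and equality forces one global $\sigma$ to sort every column, i.e.\ $S\in PP(a,b,c)$. (Note that the direction your argument actually proves is $\wt\w(\phi(p_S))\geq\w(p_S)$, which is exactly what gets used immediately after the lemma in the paper; the $\leq$ in the displayed statement is a typo.)

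The gap is in your first claim. The rearrangement inequality tells you, for a \emph{fixed} multiset in column $m$, that sorting it increasingly maximizes $\sum_\ell j_{\ell,m}(\ell-m)$. But the column-$m$ multisets $M_m^\pi=\{s_{\ell,\pi_\ell(m)}\}_\ell$ genuinely change with $\pi$, so you are not comparing orderings of one multiset --- you are comparing \emph{different} multisets, each already sorted. Your ``split columnwise'' step is therefore not available: the quantity $\sum_{\ell,m} j^o_{\ell,m}(\ell-m)$ does not decompose into a sum of columnwise terms each individually minimized at $\pi=\mathrm{id}$. Concretely, changing a single $\pi_\ell$ by a transposition alters two columns' multisets at once, and the two changes can have opposite signs.

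This is exactly where the paper spends its effort. It does not reach for rearrangement; instead it runs a double induction: first on the total inversion number of $(\pi_1,\dots,\pi_a)$, reducing to the case where all the row permutations differ from the identity only on two adjacent columns $r,r+1$; then, within that two-column case, an induction on $a$ peels off the row containing the largest entry and controls the correction from re-sorting. The strict positivity comes from the ``$-m$'' part of the cross term: each actual inversion in columns $r,r+1$ contributes $s_{\ell,r+1}-s_{\ell,r}>0$, while the ``$\ell$'' part is shown to be $\geq 0$ by the inner induction. Your ``short exchange analysis on $\pi$'' is gesturing at this, but it is precisely the content of the lemma, not a short afterthought.

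Your final paragraph misidentifies the obstacle. Cancellation of the diagonal's coefficient is a non-issue (as you yourself note, $\pi=\mathrm{id}^a$ is the unique tuple producing the diagonal's ordered exponent array). The hard part is the inequality $w(T^o)\geq w(S^o)$ across different $\pi$'s, which needs the inductive exchange argument rather than rearrangement.
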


\begin{proof}[Proof of lemma]
From the determinant structure, we see that any monomial with array 
$J$ can be obtained from $S$ by a sequence
of two operations:
\begin{itemize}
\item Permuting each row individually, noting that the original rows are sorted:
$s_{\ell,1}<\cdots<s_{\ell,b}$, $\ell=1,\ldots,a$.
\item Reordering each column individually so that $j_{1,m}\le\cdots\le j_{a,m}$, $m=1,\ldots,b$.
\end{itemize}
Each of these two operations increases $\wt\w$. We want to show that the minimum weight is attained when
the first of them is the identity permutation.

Denote the second operation $S\mapsto S^o$. We have the diagram
\[
\begin{tikzpicture}
\node (s) at (0,0) {$S$};
\node (t) at (4,0) {$T$};
\node (so) at (0,-2) {$S^o$};
\node (to) at (4,-2) {$T^o$};
\draw[->] (s) -- node[right,align=left] {\tiny reorder\\[-1mm]\tiny columns} (so);
\draw[->] (t) -- node[right,align=left] {\tiny reorder\\[-1mm]\tiny columns} (to);
\draw[->] (s) -- node[above] {\tiny permute rows} (t);
\end{tikzpicture}
\]
where $J=T^o$, and we want to compare $w(S^o)$ and $w(T^0)$.

We compute
\begin{align*}
w(T^o)-w(S^o) 
&=
\sum_{\ell=1}^a \sum_{m=1}^b 
\left( \left(\frac{1}{2}s^o_{\ell,m}+\ell-m \right)^2
-
\left( \frac{1}{2}t^o_{\ell,m}+\ell-m\right)^2\right)
\\
&=
\sum_{\ell=1}^a\sum_{m=1}^b 
(\ell-m)(t^o_{\ell,m}-s^o_{\ell,m})
\\
&=
\sum_{\ell=1}^a\sum_{m=1}^b 
\ell(t^o_{\ell,m}-s^o_{\ell,m})
-
\sum_{m=1}^b 
m \sum_{\ell=1}^a
(t_{\ell,m}-s_{\ell,m})
\end{align*}
where in the last line we have used the fact that for each $\ell$, the $t_{\ell,m}$ are
a permutation of the $s_{\ell,m}$. 

We now proceed by induction on the sum of inversion numbers of the permutations of the rows
taking $S$ to $T$.
Suppose $T\ne S$; then there exist two successive
columns $r,r+1$ where inversions occur on some row(s). We shall show that removing the inversions
on these two columns decreases strictly the weight
(this will be effectively equivalent to showing the property in the case $b=2$).

Consider the contribution of these two columns to $w(T^o)-w(S^o)$; it takes the form
\begin{multline}\label{eq:ineq}
\sum_{\ell=1}^a\sum_{m=r,r+1}
\ell(t^o_{\ell,m}-s^o_{\ell,m})
-
\sum_{\ell=1}^a
\left(
r
\sum_{\ell=1}^a
(t_{\ell,r}-s_{\ell,r})
+
(r+1)
(t_{\ell,r+1}-s_{\ell,r+1})
\right)
\\
=
\sum_{\ell=1}^a\sum_{m=r,r+1}
\ell(t^o_{\ell,m}-s^o_{\ell,m})
+
\sum_{\ell=1}^a
(s_{\ell,r+1}-t_{\ell,r+1})
\end{multline}
The second part is easy to analyze: each inversion 
$s_{\ell,r+1}=t_{\ell,r}>t_{\ell,r+1}=s_{\ell,r}$ 
produces a strict increase of the weight by $s_{\ell,r+1}-s_{\ell,r}$. All we need to prove
is that the first part is greater or equal to zero.

We proceed by induction again, this time on $a$.
Pick among all the entries of $T$ in the columns $r,r+1$ the largest one; call it $m$. 
Pick a row on which this entry appears (since it may appear multiple times); call $m'$ the other
entry on this row at the same columns $r,r+1$. 
We may always assume, by reordering of the rows, that this row is the last one.
Apply the induction to the other rows. We have
\[
\sum_{\ell=1}^{a-1}
\sum_{m=r,r+1}
\ell (t^{o(\ell-1)}_{\ell,m}-s^{o(\ell-1)}_{\ell,m})
\ge 0
\]
where the superscript $o(\ell-1)$ means that the reordering of the rows only affects the $\ell-1$
first rows.

Now compare to $w(t^o)-w(s^o)$:
\begin{multline*}
\sum_{\ell=1}^a\sum_{m=r,r+1}
\ell(t^o_{\ell,m}-s^o_{\ell,m})
=
\sum_{\ell=1}^{a-1} \sum_{m=r,r+1}
\ell (t^{o(\ell-1)}_{\ell,m}-s^{o(\ell-1)}_{\ell,m})
\\
+\sum_{\ell:\, t_{\ell,m}>m'} (t_{\ell,m}-m')
-\sum_{\ell:\, s_{\ell,r}>m'} (s_{\ell,r}-m')
\qquad (m'=t_{\ell,m})
\end{multline*}
where the extra terms of the r.h.s.\ take into account the reordering of $m'$.
Note that $t_{\ell,m}\ge s_{\ell,r}$ for $m\in \{r,r+1\}$.
We conclude that
$\sum_{\ell=1}^a
\ell(t^o_{\ell,m}-s^o_{\ell,m})\ge 0$, which is the induction hypothesis,
and combining the inequalities for the two parts of \eqref{eq:ineq},
we obtain the first part of the lemma.

Therefore,
\begin{align}
\wt\w(\phi(p_{S})))
&=\wt\w\left( \prod_{\ell=1}^a x_{m,s_{\ell,m}} \right)
=\sum_{m=1}^b 
\max_{\sigma\in \mathcal S_a}
\left(\sum_{\ell=1}^a 
\left(\frac{1}{2}s_{\sigma(\ell),m}+\ell-m\right)^2\right)
\end{align}
where, rather than use $s^o$, we have emphasized the maximum property for each column.
This is to be compared with \eqref{eq:wei1}, where the maximum is outside the summation over $m$
(one is only allowed to permute the rows globally). This immediately implies the inequality
$\wt\w(\phi(p_{S}))\le\w(p_{S})$. In case of equality,
the ordering of each column agrees, or equivalently the $s_a$ are totally ordered, which means
$S\in PP(a,b,c)$.
%
\end{proof}

(Continuing the proof of proposition \ref{prop:pluc}.)
Using the ``straightening law'' \cite[theorem 14.6]{MS-book},
the $p_{S}$ for $S\not\in PP(a,b,c)$, can be expressed
as linear combinations of noncrossing ones modulo the Pl\"ucker relations:
\begin{equation}\label{eq:lincomb}
p_{S}=\sum_{T\in PP(a,b,c)} c_{T}\, p_{T}
\end{equation}
Now pick $T\in PP(a,b,c)$ such that $\w(p_{T})$ is minimal among the terms with nonzero coefficients in the r.h.s.\ of \eqref{eq:lincomb}. Applying $\phi$ to \eqref{eq:lincomb} and noting that diagonal terms are all distinct for distinct $T$ (see a similar argument in the proof
of \cite[Theorem 14.16]{MS-book}), we find that the coefficient of the diagonal term of $\phi(p_{T})$ 
in $\phi(\text{r.h.s.\ of \eqref{eq:lincomb}})$ 
is precisely $c_{T}$ (no compensation can occur). Therefore it must also appear in the 
l.h.s.\ (and be equal to $\pm 1$),
so that
\[
\wt\w(\phi(p_{S}))\le \wt\w(\phi(p_{T}))=\w(p_{T})
\]
\junk{(in fact, we know that this bound is saturated by the particular choice $T=S^o$) -- but is $c_{S^o}$ nonzero? anyway this remark is not needed}

We conclude that
\[
\w(p_{S})<\wt\w(\phi(p_{S}))\le \w(p_{T})
\qquad
\forall t: c_t\ne 0
\]
so that the initial term of \eqref{eq:lincomb} is $p_{S}$.

It remains to show that $\init(\mathcal P)$ is no {\em larger}\/ than 
$\text{span}(p_{s_1}\ldots p_{s_a},\ (s_1,\ldots,s_a)\not\in PP(a,b,c))$.
For this, we use the dimension count
$\dim H^0(X^{c\times b};\ \calO(a)) = |PP(a,b,c)|$ explained at the
beginning of \S \ref{ssec:pluc}, and the second statement from proposition
\ref{prop:noHi}.
\end{proof}

\junk{we could have formulated things in a more uniform way, taking each set of equations -- plucker, linear, quadratic -- and extracting ``some'' equations out of their init, w/o worrying if we got it all. but it would be a problem because some components might just be empty, which doesn't contradict dimension, and we wouldn't be able to conclude that the equations we picked were grobner.}

\junk{now we can degenerate the linear equations, and it's
much easier: we can't just kill crossing monomials (careful!), but at least we can compare them using the usual weight
function, which is simple}

Consequently, we have a smaller presentation
\[ 0 \to M'_r\to F'_r\to H^0\left(CX^{c\times b};\ p^*\calO(a)\right) \to 0\]
where
\[ F'_r:=\text{span}(p_{s_1}\ldots p_{s_a}, (s_1,\ldots,s_a)\in PP(a,b,c)). \]


\subsection{Lozenge tilings and weights}\label{ssec:wei}
We have seen in the previous section the appearance of 
the subset $PP(a,b,c)$ of increasing $a$-tuples from $\subsarg{b}{b+c}$. 
As we continue our degeneration, we shall see that it is intimately 
connected to the combinatorics of lozenge tilings, which are one possible
graphical description of elements of $PP(a,b,c)$.
As shown on the top-right picture from Figure~\ref{fig:loz},
they are fillings of a $a\times b\times c$ hexagon with lozenges (of
unit edge length) in three orientations. We shall use the following
(redundant) coordinate system on such hexagons, based on an underlying
Kagome lattice:
\begin{center}
\begin{tikzpicture}[scale=0.75]
\draw (0,0) ++(-30:2) coordinate (u) -- ++(0,2) coordinate (v) -- ++(30:3) coordinate (w) -- ++(-30:4) coordinate (x) -- ++(0,-2) coordinate (y) -- ++(30:-3) coordinate (z) -- cycle;
\draw[latex-latex] ([xshift=-0.4cm]u) -- node[left] {$\ss a$} ([xshift=-0.4cm]v);
\draw[latex-latex] ([shift={(-0.2,-0.3)}]u) -- node[below] {$\ss b$} ([shift={(-0.2,-0.3)}]z);
\draw[latex-latex] ([shift={(0.2,-0.3)}]z) -- node[below] {$\ss c$} ([shift={(0.2,-0.3)}]y);
\begin{scope}
\clip ([shift={(210:0.7)}]u) -- ([shift={(150:0.7)}]v) -- ([shift={(90:0.7)}]w) -- ([shift={(30:0.7)}]x) -- ([shift={(-30:0.7)}]y) -- ([shift={(-30:1)}]z) -- ([shift={(-90:0.7)}]z) -- cycle;
\foreach\i in {1,...,6} \draw[dotted,blue] (-30:\i-0.5) ++(30:-2) -- ++(30:9); 
\foreach\j in {1,...,7} \draw[dotted,red!50!black] (-30:\j+1.5) ++(0,-2) -- ++(0,10);
\foreach\k in {1,...,5} \draw[dotted,green!50!black] (30:\k-0.5) ++(-30:-2) -- ++ (-30:9);
\end{scope}
\node at (5.3,2.6) {$\ss i=1$};
\node at (8.7,-0.8) {$\ss i=a+b$};
\node at (2.2,1.85) {$\ss j=1$};
\node at (7.5,1.3) {$\ss j=b+c$};
\node at (6.3,-3.2) {$\ss k=1$};
\node at (8.7,-0.2) {$\ss k=a+c$};
\end{tikzpicture}
\end{center}
The blue (resp.\ red, green) lines are constant $i$ (resp.\ $j$, $k$) curves.
We have the relation $i-a+k-j-1/2=0$. 

\newcommand\Bloz{\tikz[scale=0.4]{\def\a{1}\def\b{1}\def\c{1}\lozX(0,0)}}
\newcommand\Cloz{\tikz[scale=0.4]{\def\a{1}\def\b{1}\def\c{1}\lozY(0,0)}}
\newcommand\BCloz{\tikz[scale=0.4]{\def\a{1}\def\b{1}\def\c{1}\lozXY(0,0)}}
Let us call lozenges \Bloz\ ``of type $B$'', \Cloz\ ``of type $C$'', 
and \BCloz\ ``of type $BC$''.
We notice that the center of a lozenge of type $B$ has integer
coordinates $(i,j)$, that of type $C$ integer coordinates $(j,k)$, and
that of type $BC$ integer coordinates $(i,k)$.  We shall always use
such coordinates for each type of lozenge. On the example of
Fig.~\ref{fig:loz}, the \Cloz\ lozenges of type $C$ have coordinates $\Y$,
listed in English-reading order.

We record a few facts about the plane partitions, hexagons, dimers, and rhombi.
One can act on a plane partition by adding or removing a box, which on
the dimer configuration corresponds to rotating a $3$-dimer hexagon by
$60^\circ$. Hence the number of dimers in each orientation is constant,
and can be computed from the case of the empty plane partition; 
there are $ac$ dimers of type $B$, $ab$ of type $C$, and $bc$ of type $BC$.
It is also easy to compute the number of hexagons:
if we replace $(a,b,c)$ by $(0,b+a,c+a)$, adding $a\choose 2$ on each side,
the region becomes a parallelogram with $(b+a-1)(c+a-1)$ hexagons.
Hence there are $(b+a-1)(c+a-1)-2{a\choose 2} = ab+ac+bc-a-b-c+1$ hexagons.

With these conventions, the bijection from lozenge tilings to subsets
in $PP(a,b,c)$ is to keep track of the $j$ coordinates of every
lozenge of type $C$.  More precisely, once we draw (as on the
lower-left of Fig.~\ref{fig:loz}) paths made of lozenges of type $B$
and $C$, then the subset $s_i$, $i=1,\ldots,a$, records the locations
of down steps of path $i$ indexed from bottom to top.

Finally we introduce one more coordinate, called $y$, which is the 
vertical coordinate of the center of any lozenge,
where the lower left corner of the hexagon has
coordinate $y=1/2$. With these conventions, the conversion from $(i,j,k)$ to $y$, effectively giving the $y$ coordinate 
of the centers of the three types of lozenges, is
\begin{align}
y^B_{i,j} &= \frac{1}{2}j-(i-a)+\frac{1}{2} \label{eq:yB}
\\
y^C_{j,k} &= k-\frac{1}{2}j \label{eq:yC}
\\
y^{BC}_{i,k} &=k-(i-a)+\frac{1}{2} \label{eq:yBC}
\end{align}

We are now ready to 
introduce the weights of the variables $B_{i,j}$ and $C_{j,k}$.
We define
\begin{align}\label{eq:wtB}
\w(B_{i,j})&=(y^B_{i,j})^2
&& 1\le i\le a+b,\ 1\le j\le b+c
\\\label{eq:wtC}
\w(C_{j,k})&=(y^C_{j,k})^2
&& 1\le j\le b+c,\ 1\le k\le a+c
\end{align}
With this we can define the Gr\"obner degeneration $\init(A)$ of
the coordinate ring $A$ of the orbital variety defined by
(\ref{eq:quad})-(\ref{eq:Crank}). We will not directly determine a
Gr\"obner basis for $A$; rather, {\bf it will be easiest to study 
  $\init(A)$ by its action on the summands of $\init(M'_r)$.} 

These weights (\ref{eq:wtB},\ref{eq:wtC}), 
combined with the definition of $\w(p_{S})$ given
in \eqref{eq:wei1}, also allow definition of the weight of an
arbitrary monomial $\prod B_{i,j} \prod C_{j,k}\,p_S$ in
$F_r$ as the sum of weights of its factors.
For $x\in F_r$ we then define the initial form $\init(x)$ 
to be the sum of monomials of $x$ for which the function $\w$ is minimal.

\begin{thm}\label{thm:leadTerms}
  For $S \in PP(a,b,c)$ with corresponding dimer configuration $D_S$, 
  and $d \in D_S$ a dimer, we obtain a generator of $\init(M'_r)$
  as follows. 
  \begin{itemize}
  \item If $d$ lies in a lozenge of type $B$ with center $(i,j)$,
    then $B_{ij} p_S \in \init(M'_r)$.
  \item If $d$ lies in a lozenge of type $C$ with center $(j,k)$,
    then $C_{jk} p_S \in \init(M'_r)$.
  \item If $d$ lies in a lozenge of type $BC$ with center $(i,k)$, \\
    then $(B_{i,i+k-a-1}C_{i+k-a-1,k} + B_{i,i+k-a}C_{i+k-a,k})p_S \in \init(M'_r)$.
  \end{itemize}
  Taken over all $S$ and $d\in D_S$, these monomial and binomial
  relations generate the $\init(A)$-submodule $\init(M'_r)$.
\end{thm}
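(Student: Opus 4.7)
The plan is to identify, for each of the three families of relations from Theorem~\ref{thm:grob} multiplied by a suitable Pl\"ucker monomial $p_{s_2}\cdots p_{s_a}$, the monomial (or pair of monomials) of minimum weight, and to match these initial forms to the dimers of $D_S$ under the bijection of \S\ref{ssec:wei} between $PP(a,b,c)$ and lozenge tilings. The reduction from $F_r$ to the $PP(a,b,c)$ basis $F'_r$ is handled cleanly by Proposition~\ref{prop:pluc}: replacing a crossing monomial $p_{S'}$ with $S'\notin PP(a,b,c)$ by its Pl\"ucker straightening strictly \emph{raises} weight, so any noncrossing $PP(a,b,c)$ term attaining the minimum weight in the original expanded relation survives unchanged as the initial term after straightening.

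Fix $S=(s_1,\ldots,s_a)\in PP(a,b,c)$ with dimer configuration $D_S$. For a type-$B$ dimer at $(i,j)\in D_S$, I locate the path index $\ell$ in the NILP representation passing through $(i,j)$ and set $s_+:=s_\ell\cup\{j^\ast\}$ for the neighbouring column index $j^\ast$ prescribed by the adjacent lozenge of $D_S$. Applying relation \eqref{eq:B} times $\prod_{\ell'\neq\ell} p_{s_{\ell'}}$ yields a sum, indexed by $j'\in s_+$, of monomials $B_{i,j'}\,p_{s_+\setminus j'}\prod_{\ell'\neq\ell} p_{s_{\ell'}}$, each of weight $(y^B_{i,j'})^2 + \w\bigl(p_{s_+\setminus j'}\prod_{\ell'\neq\ell} p_{s_{\ell'}}\bigr)$. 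The combinatorial heart of the argument is to show that this total weight is strictly convex in $j'$ and attains its unique minimum at $j'=j$, with the associated $a$-tuple equal to $S$ itself; quadraticity of $(y^B_{i,j'})^2$ in $j'$ combined with the monotonic behaviour of $\w$ on the column-sorted tuple should suffice. The type-$C$ case is entirely symmetric, using \eqref{eq:C} and \eqref{eq:yC}.

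For a type-$BC$ dimer at $(i,k)\in D_S$, I use the quadratic relation $\sum_{j'=1}^{b+c} B_{i,j'}C_{j',k}\cdot p_S$. The $j'$-th term has weight $(y^B_{i,j'})^2 + (y^C_{j',k})^2 + \w(p_S)$, and identities \eqref{eq:yB}--\eqref{eq:yBC} give $y^B_{i,j'}+y^C_{j',k}=y^{BC}_{i,k}$, a constant in $j'$. Minimising $(y^B)^2+(Y-y^B)^2$ under $y^B\in \tfrac12\ZZ$ is then elementary: since $y^{BC}_{i,k}$ is a half-integer (so $Y/2$ is a quarter-integer), the minimum distance to $Y/2$ is exactly $1/4$ and is realised at precisely two consecutive values $j'=i+k-a-1$ and $j'=i+k-a$, yielding the claimed binomial.

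The hard part is the generation claim: that the exhibited leading forms exhaust $\init(M'_r)$. My plan is to defer it to the Hilbert-series matching of \S\ref{sec:degen}, where the quotient of $\init(F'_r)$ by the submodule generated by our proposed leading forms will be decomposed explicitly into cones indexed by $PP(a,b,c)$, each cone cut out by the $ab+ac+bc$ dimers of its plane partition; the resulting $T$-character matches that of $F'_r/M'_r$ computed independently from the degree $2^{bc}|PP(a,b,c)|$ of the orbital variety (Proposition~\ref{prop:orbvardeg}), forcing equality. The delicate input that remains is the convexity-plus-parity identification of minimisers for the linear relations: one must verify that the minimum weight across the $b+1$ terms of \eqref{eq:B} is always attained at a $j'$ making $(s_+\setminus j',s_2,\ldots,s_a)\in PP(a,b,c)$, since otherwise straightening would lift that minimum and the true initial term would come from elsewhere in the sum.
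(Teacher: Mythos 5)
Your strategy matches the paper's: read each family of relations from Theorem~\ref{thm:grob} against the weight function, extract the initial term(s), match them to the dimers of $D_S$, and defer the generation claim to the Hilbert-series/degree comparison of \S\ref{sec:degen}. Your treatment of the type-$BC$ case is in fact a clean reformulation: the observation that $y^B_{i,j'}+y^C_{j',k}=y^{BC}_{i,k}$ is independent of $j'$ reduces the minimisation to a one-variable parity argument, more transparent than the paper's bare ``obvious minimization of the quadratic form in $j$''. (You should add, though it is quick, that a genuine type-$BC$ lozenge of $D_S$ has $a+1<i+k<n+1$, so the two minimisers $i+k-a-1,\ i+k-a$ lie in $\{1,\ldots,b+c\}$ and the boundary cases the paper separately discusses do not arise for $d\in D_S$.)

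The gap is in the linear case, and not only because you leave the computation pending. Your framing — show that the total weight is strictly convex in $j'$ using quadraticity of $(y^B_{i,j'})^2$ plus ``monotonic behaviour of $\w$ on the column-sorted tuple'' — collides with the fact that $\w\bigl(p_{s_+\setminus j'}\prod_{\ell'\ne\ell}p_{s_{\ell'}}\bigr)$ is by definition (Eq.~\eqref{eq:wei1}) a \emph{maximum} over permutations, and the maximising permutation changes with $j'$; you cannot track it as a fixed column-sorted expression. The paper's device is to bound $\w$ from \emph{below} by $w$ evaluated at the fixed natural ordering $(s_1,\ldots,s_{\ell-1},s_+\setminus j',s_{\ell+1},\ldots,s_a)$, minimise that lower bound plus $\w(B_{i,j'})$ via the telescoping increment $(s_{+,h+1}-s_{+,h})(h-i+\tfrac12+a-\ell)$ (so unimodality, not convexity in $j'$: the gaps $s_{+,h+1}-s_{+,h}$ need not be monotone), and then note the bound is \emph{tight} at the unique minimiser $h=i+a-\ell$, i.e.\ $j'=j$, exactly because there $s_+\setminus j=s_\ell$ reproduces $S\in PP(a,b,c)$. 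This lower-bound trick is what removes the need to control the max and is the missing idea. A small notational point: $s_+$ is simply $s_\ell\cup\{j\}$ with $j$ the column of the type-$B$ lozenge itself; there is no ``adjacent'' $j^\ast$.
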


Showing that these generators are the leading forms of 
the generators from theorem \ref{thm:grob} will occupy sections 
\S~\ref{ssec:linear}-\ref{ssec:quad}. Then showing they actually
generate will come in \S~\ref{sec:degen}.

As discussed in \S \ref{ssec:pluc}, we are mostly interested in the generators
$p_{S}$ where $S\in PP(a,b,c)$;
we note that \eqref{eq:wei1} can be rewritten, in that case, as
\begin{equation}\label{eq:wtp}
\w(p_{s_1}\ldots p_{s_a})=\sum_{\substack{\text{lozenges of type $C$}\\\text{of $S$ at $(j,k)$}}}
(y^C_{j,k})^2,
\qquad S= (s_1\le\cdots\le s_a)
\end{equation}
(note that the maximum is attained for the identity permutation, then use \eqref{eq:yC}).
This seems to break the symmetry between lozenges of type $B$ and those
of type $C$; however it is not hard to show that
\begin{equation}\label{eq:wtp2}
\w(p_{s_1}\ldots p_{s_a})=const+\sum_{\substack{\text{lozenges of type $B$}\\\text{of $S$ at $(i,j)$}}}
(y^B_{i,j})^2,\qquad S= (s_1\le\cdots\le s_a)
\end{equation}
where $const$ is an irrelevant constant (depending only on $a$ and $c-b$).
Equivalently, we have the two explicit expressions
\begin{align}\label{eq:expr1}
\w(p_{s_1}\ldots p_{s_a})&=
\sum_{i=1}^a \sum_{j=1}^b \left(\frac{1}{2}s_{i,j}+i-j\right)^2
\\\label{eq:expr2}
&=
const+\sum_{i=1}^a \sum_{k=1}^c 
\left( -\frac{1}{2}\bar s_{i,k}+i+k-\frac{1}{2}\right)^2
\end{align}

To state our main result of the next two sections, we need some
foundational results on Gr\"obner bases of modules \cite[\S 15]{Eisenbud}:

\begin{prop}\label{prop:grobner}
  Let $R \geq I$ be a polynomial ring and an ideal, with quotient $A := R/I$.
  \break
  Fix also a partial term order on $R$'s monomials with which to define
  Gr\"obner degenerations such as $\init(A) := R/\init(I)$.

  Let $P$ be an indexing set, and $M \leq A^P$ an $A$-submodule,
  with pullback
  $\begin{matrix} \wt M &\into& R^P \\ \downarrow && \downarrow\\ M&\into&A^P.
  \end{matrix} $ \\
  Then the $R$-module $R^p/\init(\wt M)$ descends to an $\init(A)$-module, 
  and any Gr\"obner basis for $\wt M \leq R^P$ descends to 
  generating sets for $M \leq A^p$
  and (stronger) for 
  $$ \init(M) := \init(A) \tensor_R \init(\wt M)
  \quad\leq\quad \init(A) \tensor_R R^P \quad\iso\quad \init(A)^P. $$
\end{prop}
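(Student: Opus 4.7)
The plan is to prove the three claims in sequence. First, to see that $R^P/\init(\wt M)$ descends to an $\init(A)$-module, I would check that $\init(I)\cdot R^P \subseteq \init(\wt M)$. Since $\wt M$ is the pullback of an $A$-submodule, it automatically contains $I\cdot R^P$. For any $f\in I$ and any monomial basis vector $r\cdot e_p \in R^P$, multiplication by $r\cdot e_p$ shifts weights additively by $\w(r\cdot e_p)$, so $\init\bigl(f\cdot re_p\bigr)=\init(f)\cdot re_p$. Since $f\cdot re_p\in\wt M$, this initial form lies in $\init(\wt M)$; varying $f,r,p$ yields the desired containment.

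Next, for the stronger statement on $\init(M)$: by the defining property of a Gr\"obner basis, $\init(\wt M)=\sum_\alpha R\cdot\init(g_\alpha)$. Reducing modulo $\init(I)\cdot R^P$ (which the previous step showed is contained in $\init(\wt M)$) gives
\[
\init(M)\;=\;\init(A)\tensor_R\init(\wt M)\;=\;\sum_\alpha \init(A)\cdot\overline{\init(g_\alpha)}\;\leq\;\init(A)^P,
\]
so the residues $\overline{\init(g_\alpha)}$ generate $\init(M)$ as an $\init(A)$-module.

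For the third (weaker) claim, that the images $\bar g_\alpha$ generate $M$ over $A$, I would run the standard Gr\"obner division argument: given $m\in M$, lift to $\wt m\in\wt M$, iteratively find an $\alpha$ and a monomial $r\in R$ such that $r\cdot\init(g_\alpha)$ cancels the lowest-weight term of the current remainder, subtract $rg_\alpha$, and repeat. The resulting expression $\wt m=\sum h_\alpha g_\alpha$ projects to $A^P$ to write $m$ in terms of the $\bar g_\alpha$.

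\emph{The main obstacle} is ensuring termination of the division in Step 3; this requires the weight function to be bounded below with only finitely many monomials of given weight in each relevant multigraded piece of $R^P$. In the applications of this paper the weights (\ref{eq:wei1}), (\ref{eq:wtB}), (\ref{eq:wtC}) are bounded-below quadratics on a finitely generated multigraded ring, so termination is automatic; for the abstract statement one falls back on the foundational setup of Gr\"obner bases in modules as in \cite[\S 15]{Eisenbud}. Steps 1 and 2 are then straightforwardly formal, and the Hilbert series equality stated at the end of Theorem~\ref{thm:grob} is then an immediate consequence of the second (stronger) conclusion.
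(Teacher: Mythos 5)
Your proposal is correct, and Steps 1--2 track the paper's argument fairly closely, with Step 1 being if anything more explicit: the paper phrases the descent claim as the vanishing of $\init(I)\tensor_R R^P/\init(\wt M)$ and invokes $\init(I\tensor_R R^P/\wt M)\iso\init(I\tensor_R A^P/M)=\init(0)$, which hides exactly the monomial-multiplication computation you spell out. Both amount to $\init(I)\cdot R^P\subseteq\init(\wt M)$.

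Where you diverge from the paper, and make life harder for yourself, is Step 3. You do not need the division algorithm (and hence do not need to fuss about termination): a Gr\"obner basis $\{g_\alpha\}$ for $\wt M$ is in particular a generating set for $\wt M$ as an $R$-module, and since $M\iso\wt M/(I\cdot R^P)$, the images $\bar g_\alpha$ automatically generate the quotient $M$. This is the paper's one-line argument, and it completely sidesteps the ``main obstacle'' you flagged. (The division-algorithm route would also establish it, but the termination concern you raise is precisely the sort of thing the quotient argument avoids; and if one did want to run a division argument, the relevant foundational hypotheses are those built into the notion of a term order for modules in \cite[\S 15]{Eisenbud}, which the proposition tacitly assumes.) The remaining observation the paper makes — that the $\overline{\init(g_\alpha)}$ generating $\init(M)$ is the \emph{stronger} of the two generation claims, being the Gr\"obner-basis property of $\{\bar g_\alpha\}$ for $M$ — is the same content as your Step 2; you just present it before, rather than after, the generation of $M$.
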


\newcommand\calB{\mathcal{B}}
\begin{proof}
  The descent-of-basis claim is equivalent to the vanishing of 
  $\init(I)\tensor_R R^p/\init(\wt M)$,
  computable from $\init(I\tensor_R R^p/\wt M)
  \iso \init(I\tensor_R A^p/M) = \init(0)$. For the first claim of generation,
  a Gr\"obner basis $\calB$ for $\wt M$ generates $\wt M$, 
  hence descends to generates its quotient $M$. 
  But to be Gr\"obner, it must descend to generate $\init(\wt M)$, 
  of which $\init(M)$ is a quotient, giving the second claim of generation.
\end{proof}

With this in mind, we can speak sensibly of Gr\"obner bases for
submodules of free $A$-modules, where $A$ is presented as a quotient 
of a polynomial ring (as our $A$ is, in (\ref{eq:quad}-\ref{eq:Crank})). 

While the $B,C$ relations (\ref{eq:B}-\ref{eq:C}) define the module 
$F'_r/M'_r$, their initial forms aren't enough to generate
$\init(M'_r)$; we need the $BC$ relations from (\ref{eq:quad}) as well.

In fact, more detailed analysis (in \S \ref{ssec:quad})
will show that we only need a subset of these to produce a Gr\"obner basis,
one for each dimer of each dimer configuration.

\subsection{The linear equations}\label{ssec:linear}
We first discuss the equations
(\ref{eq:B},\ref{eq:C}), which are linear in the variables $B_{i,j}$ or
$C_{j,k}$, multiplied by ``spectator'' monomials $p_{s_2}\ldots p_{s_a}$ as in Thm.~\ref{thm:grob}.

It is perhaps instructive to consider first the special case of
$a=1$. The Pl\"ucker relations do not appear in that case, and the
linear relations are simply eqs.~(\ref{eq:B},\ref{eq:C}) without spectators.

Start with \eqref{eq:B}. The weights of its monomials are given by
\begin{align}\label{eq:wtstudy1}
\w(B_{i,j}p_{s_+\backslash j})
& = \left(\frac{1}{2}j-(i-1)+\frac{1}{2} \right)^2
+\sum_{m=1}^b \left(\frac{1}{2} ((s_+\backslash j)_m+1-m)\right)^2  
\\\notag
& = \left(\frac{1}{2}j-i+\frac{3}{2}\right)^2
+\sum_{m=1}^{h-1} \left(\frac{1}{2} (s_{+\,m}+1-m)\right)^2   && j=s_{+,h} 
\\\notag
&+\sum_{m=h+1}^{b+1} \left(\frac{1}{2} s_{+,m}+2-m\right)^2, \\\notag
&=\kappa_1 + \left( \frac{1}{2}j-i+\frac{3}{2} \right)^2
+\sum_{m=1}^{h-1}(s_{+,m}-2m+1) 
- \left(\frac{1}{2}j+2-h \right)^2 \\\notag
&=\kappa_2-(i+1/2)j+h j -\sum_{m=1}^{h-1} s_{+,m}
\end{align}
where $s_+$ is of cardinality $b+1$, and we order the elements of 
$s_+\backslash j$ as usual, $(s_+\backslash j)_1<\cdots<(s_+\backslash j)_b$.
From the third line to the fourth line, we have subtracted 
$\sum_m \left(\frac{1}{2}s_{+,m}+2-m\right)^2$, 
resulting in an irrelevant constant $\kappa_1$
(which is independent of $h$ or $j$). $\kappa_2$ is another such constant.

We claim that \eqref{eq:wtstudy1} has a unique minimum at $h=i$, i.e., $j=s_{+,i}$.
Indeed, compute the difference
\[
\w(B_{i,s_{+,h+1}}p_{s_+\backslash s_{+,h+1}})-
\w(B_{i,s_{+,h}}p_{s_+\backslash s_{+,h}})
=(h-i+1/2)(s_{+,h+1}-s_{+,h})
\]
which is negative for $h\le i-1$ and positive for $h\ge i$.
We conclude that
\begin{equation}\label{eq:degena1}
\init\left(\sum_{j\in s_+} B_{i,j} p_{s_+\backslash j}\right)
=B_{i,s_{+,i}} p_{s_+\backslash s_{+,i}}
\end{equation}

We can repeat the analysis for \eqref{eq:C}:
\begin{align}\label{eq:wtstudy2}
\w(C_{j,k}p_{s_-\cup j})
&=
(k-\frac{j}{2})^2
+
\sum_{m=1}^b (\frac{1}{2}(s_-\cup j)_m+1-m)^2
\\\notag
&=
(k-\frac{j}{2})^2
+\sum_{m=1}^{j-h} (\frac{1}{2}s_{-,m}+1-m)^2
\\\notag
&+(h-\frac{1}{2}j)^2
+\sum_{m=j-h+1}^{b-1}(\frac{1}{2}s_{-,m}-m)^2&& j=\bar s_{-,h}
\\\notag
&=\kappa_3+\frac{1}{2}j^2-j(k+h)+\sum_{m=1}^{j-h}(s_{-,m}-2m+1)
\\\notag
&=\kappa_3-\frac{1}{2}j^2+j(h-k)+\sum_{m=1}^{j-h}s_{-,m}
\end{align}
and once again
\[
\w(C_{\bar s_{-,h+1},k}p_{s_-\cup \bar s_{-,h+1}})
-
\w(C_{\bar s_{-,h},k}p_{s_-\cup \bar s_{-,h}})
=(h-k+1/2)(\bar s_{-,h+1}-\bar s_{-,h})
\]
which is negative for $h\le k-1$ and positive for $h\ge k$, so that
\begin{equation}\label{eq:degena1b}
\init\left(\sum_{j\in\bar s_-} C_{j,k} p_{s_-\cup j}\right)
=C_{\bar s_{-,k},k} p_{s_-\cup \bar s_{-,k}}
\end{equation}
Of course we could have made the reasoning even more similar to the previous one by using \eqref{eq:expr2} instead of \eqref{eq:expr1}.

Results \eqref{eq:degena1} and \eqref{eq:degena1b} both have a simple
diagrammatic interpretation. Given $s\in \subsarg{b}{b+c}$, 
we can draw the corresponding lozenge tiling:
\[
\def\a{1}\def\b{3}\def\c{5}
\def\XY{(2, 1), (3, 1), (4, 1), (2, 2), (3, 2), (4, 2), (2, 3), (3, 3), (4, 3), (1, 5), (3, 4), (4, 4), (1, 6), (2, 6), (3, 6)}
\def\X{(1, 1), (1, 2), (1, 3), (2, 5), (4, 8)}
\def\Y{(4, 4), (6, 5), (7, 5)}
a=\a,\,
b=\b,\,
c=\c,\,
s=\{4,6,7\}:
\qquad
\vcenter{\hbox{\begin{tikzpicture}
\loz\XY\X\Y
\end{tikzpicture}}}
\]
There are exactly $b+c$ lozenges of type $B$ or $C$.
Consider one of these lozenges of type $B$.
Its coordinates are $(i,j)$ where $j\not\in s$ and $i$ is one plus the number of elements of $s$ less than $j$. 
(For example, the fifth lozenge of type $B$ on the example has coordinates $(2,5)$.)
Therefore,
defining $s_+=s\cup j$, one has $j=s_{+,i}$ and one can naturally associate to it the initial term of an equation as in \eqref{eq:degena1}
indexed by $s_+$ and $i$.
Similarly, to a lozenge of type $C$, with coordinates $(j,k)$, is naturally associated the initial term
of \eqref{eq:degena1b} with $s_-=s\backslash j$ and $j=\bar s_{-,k}$.

We have found that to each $s\in \subsarg{b}{b+c}$ viewed as a lozenge tiling of a $1\times b\times c$ hexagon, we can associate equations of the form $p_s B_{i,j}=0$ and $p_s C_{j,k}=0$
where $(i,j)$ (resp.\ $(j,k)$) runs over the coordinates of lozenges of type $B$ (resp.\ type $C$).
We now wish to extend this conclusion to general $a$.
The difference from the $a=1$ case is that we shall pick a subset of equations to do so (with the implicit assumption, to be proven subsequently,
that all other equations are redundant after the degeneration).

Reversing the logic we now start, for $a$ arbitrary, with an $S\in PP(a,b,c)$ viewed as a lozenge tiling, and one lozenge of type $B$ at $(i,j)$. In the NILP representation of $S$, it corresponds to a certain path labelled $\ell$ (between $1$ and $a$), with $j\in \bar s_\ell$.
We pick among Eqs.~\eqref{eq:B} the one with $s_+=s_\ell \cup j$, and multiply it by $\prod_{\ell'\ne\ell} p_{s_{\ell'}}$. We now carefully evaluate the weights of the various monomials in it.

The key observation is that from the definition~\eqref{eq:wei1} of the weight, we can bound from below
the weight of each monomial $p_{s_+\backslash j'}
\prod_{\ell'\ne\ell} p_{s_{\ell'}}$ by the expression
$w(s_1,\ldots,s_{\ell-1},s_+\backslash j',s_{\ell+1},\ldots,s_a)$ (note the ordering);
in the particular case $j'=j$, this bound is achieved because $S\in PP(a,b,c)$.
At that stage we can do the exact same calculation as in the case $a=1$;
skipping the details, we find
\begin{align*}
&w(s_1,\ldots,s_{\ell-1},s_+\backslash s_{+,h+1},s_{\ell+1},\ldots,s_a)
+\w(B_{i,s_{+,h+1}})
\\
&-
w(s_1,\ldots,s_{\ell-1},s_+\backslash s_{+,h},s_{\ell+1},\ldots,s_a)
-\w(B_{i,s_{+,h}})
\\
&=
(s_{+,h+1}-s_{+,h})(h-i+1/2+a-\ell)
\end{align*}
so that this function has a strict minimum at $h=i+a-\ell$, and we easily compute $s_{+,h}=j$, thus obtaining
\begin{equation}\label{eq:initlin1}
\init\left(\sum_{j'\in s_+} B_{i,j'}
p_{s_+\backslash j'}
\prod_{\ell'\ne\ell} p_{s_{\ell'}}
\right)
=
p_{S} B_{i,j}
\end{equation}

The exact same reasoning applies to a lozenge of type $C$ of $S$ at $(j,k)$ (on the path labelled $\ell$); computing
\begin{align*}&
w(s_1,\ldots,s_{\ell-1},s_-\cup \bar s_{-,h+1},s_{\ell+1},\ldots,s_a)
+\w(C_{\bar s_{-,h+1},k})
\\
&
-
w(s_1,\ldots,s_{\ell-1},s_-\cup \bar s_{-,h},s_{\ell+1},\ldots,s_a)
-\w(C_{\bar s_{-,h},k})
\\
&=
(\bar s_{-,h+1}-\bar s_{-,h})(h-k-1/2+\ell)
\end{align*}
leads to the initial term
\begin{equation}\label{eq:initlin2}
\init\left(\sum_{j'\in \bar s_-} C_{j',k}
p_{s_- \cup j'}
\prod_{\ell'\ne\ell} p_{s_{\ell'}}
\right)
=
p_{S} C_{j,k}
\end{equation}

\subsection{The quadratic equations}\label{ssec:quad}
These are Eqs.~\eqref{eq:quad}, which are equations of the support of $\mu_* \sh_{c\times b}$,
i.e., which are true acting on any $p_{s_1}\cdots p_{s_a}$.
Once we apply the degeneration given by weights (\ref{eq:wtB},\ref{eq:wtC}), obvious minimization of the
quadratic form in $j$ results in
\[
\init(p_{S}(BC)_{i,k})=
p_{S}\begin{cases}
B_{i,1}C_{1,k}& i+k\le a+1\\
B_{i,i+k-a-1}C_{i+k-a-1,k}+B_{i,i+k-a}C_{i+k-a,k}& a+1<i+k<n+1\\
B_{i,b+c}C_{b+c,k}&i+k\ge n+1
\end{cases}
\]

Rather than keeping all these initial terms, we shall show that many of them are redundant, i.e., can be derived from the initial terms \eqref{eq:initlin1} and \eqref{eq:initlin2} of the linear equations.
Because of Prop.~\ref{prop:pluc}, we may always assume that $S\in PP(a,b,c)$.

Let us start with the first type, that is $B_{i,1}C_{1,k}$, $i+k\leq a+1$.  
If we look at the $j=1$ slice of a lozenge tiling (i.e., the leftmost
vertical slice), we find that it always consists, from bottom to top,
of a series of \Cloz\ lozenges of type $C$, then a \BCloz\ lozenge of type $BC$,
then a series of \Bloz\ lozenges of type $B$. This means that among 
the initial terms of \eqref{eq:initlin1} and \eqref{eq:initlin2}, we have
\[
C_{1,1},\ldots,C_{1,i},B_{a-i,1},\ldots,B_{1,1}
\]
times $p_{S}$
for some $i$ between $0$ and $a$. This immediately implies that if $i+k\le a+1$, one of $B_{i,1}$ or $C_{1,k}$ is found in this list.

Similarly, one can show that the last case $B_{i,b+c}C_{b+c,k}$ is redundant because of the form of the rightmost slice $j=b+c$ of any lozenge tiling.

Finally, consider the middle case. In order to study it, it is convenient to go over to the dual picture of lozenge tilings, i.e., dimers, cf the lower right picture of Fig.~\ref{fig:loz}.
According to the previous section, each non-horizontal edge corresponds to a certain $B_{i,j}$ or $C_{j,k}$ depending on its orientation, and this edge is occupied by a dimer precisely when that variable times $p_{S}$ is the initial term of an equation. It is natural to associate to $(BC)_{i,k}$, $a+1<i+k<n+1$, the location $(i,k)$, which on this dual picture corresponds to a certain horizontal edge. Now it is easy to see that the two terms
$B_{i,i+k-a-1}C_{i+k-a-1,k}$ and $B_{i,i+k-a}C_{i+k-a,k}$ are precisely the product of the variables attached to the edges {\em adjacent}\/ to the horizontal edge at either endpoint:
\begin{center}
\begin{tikzpicture}
\draw[emptypath] (0,0) -- node[black,above] {$\ss (BC)_{i,k}$} ++(0:1) -- node[black,right] {$\ss B_{i,i+k-a}$}  ++(60:1) (1,0) -- node[black,right] {$\ss C_{i+k-a,k}$} ++(-60:1) (0,0) -- node[black,left] {$\ss C_{i+k-a-1,k}$} ++(120:1) (0,0) -- node[black,left] {$\ss B_{i,i+k-a-1}$} ++(-120:1);
\end{tikzpicture}
\end{center}
The dimer condition means that every vertex belongs to exactly one dimer. This means that there are two scenarios:
\newcommand\dimerex{\draw[emptypath] (0,0) -- ++(0:1) -- ++(60:1) (1,0) -- ++(-60:1) (0,0) -- ++(120:1) (0,0) -- ++(-120:1);}%
\begin{itemize}
\item The horizontal edge is empty, and then at either endpoint one adjacent edge must be occupied by a dimer, e.g.,
\begin{tikzpicture}[scale=0.75,baseline=0]
\dimerex
\draw[mypath] (0,0) -- ++(120:1) (1,0) -- ++(60:1);
\end{tikzpicture}.
This immediately implies that the initial term associated to the horizontal edge is redundant.
\item The horizontal edge is occupied:
\begin{tikzpicture}[scale=0.75,baseline=0]
\dimerex
\draw[mypath] (0,0) -- (1,0);
\end{tikzpicture},
which equivalently means that there is a lozenge of type $BC$ at $(i,k)$. Then we decide to keep the corresponding initial term.
\end{itemize}
In the end, we see that a beautiful picture emerges: to {\em each}\/
lozenge of the tiling is associated exactly one initial term, either
linear in two orientations or quadratic in the last one (type $BC$).
Note in the latter type, we get {\em binomials}\/ instead of
only getting monomials, as we would in a generic degeneration. This
binomial behavior will lead to the appearance of certain toric varieties, 
as we explain now.

\section{The special fiber}\label{sec:degen}

We recall the notion of a {\em shelling}\/ of a simplicial complex and
explain what modifications to it are necessary to describe the special
fiber of our degeneration. All the definitions in the next paragraph
are standard; our reference is \cite[\S 1 and \S 13]{MS-book}.

\newcommand\Union\bigcup

A collection $\Delta \subseteq 2^V$ of subsets of a ``vertex set'' $V$ is a 
\defn{simplicial complex} if $F \in \Delta, G\subseteq F \implies G\in \Delta$.
It has a corresponding union of coordinate subspaces
\[ SR(\Delta) := \Union_{F\in \Delta} \CC^F \quad \subseteq \CC^V \]
called its \defn{(affine) Stanley--Reisner scheme}, 
and every such union $S \subseteq \CC^V$ comes from a unique simplicial complex 
$\Delta(S) := \{F \subseteq V\ :\ \CC^V \subseteq S\}$. 
The maximal elements of $\Delta$ are called its \defn{facets}; 
if they all have the same size $d+1$ then $\Delta$ is called \defn{pure
  of dimension $d$}.
A \defn{shelling} of a pure simplicial complex $\Delta$
is an ordering $F_1,\ldots,F_m$ of its facets such that
$\{G \subseteq F_i\ :\exists j<i, F_j \supset G\}$ 
is again pure and of codimension $1$ in $F_i$, for each $i$.
(Shellings only exist for nice-enough $\Delta$;
for example a union of two solid triangles at a point is an
unshellable complex.)

Let $A
:= \CC[x_v\ :\ v\in V] \big/ \langle \prod_{g\in G} x_g \rangle_{G\notin \Delta}$
be the \defn{Stanley--Reisner ring} of $\Delta$,
the coordinate ring of $SR(\Delta)$. 
Given a shelling of $\Delta$, we can associate a list of ring elements
\[ r_i := \prod \left\{ x_j \in F_i\ :\ F_i\setminus j \subseteq \Union_{j<i} F_j \right\} \]
and a filtration $M_i := \langle r_i,\ldots,r_m \rangle \leq A$ of the
regular module $M_1 = A$. Then the following is straightforward:

\begin{prop}\label{prop:shelling}
  Each summand of the $A$-module $gr\ M_1 := \bigoplus_{i\leq M} M_i/M_{i+1}$ 
  is a regular module over one component of $A$,
  i.e. $M_i/M_{i+1} = A\cdot \overline{r_i}$ and
  the annihilator ideals $ann(\overline{r_i})$ are the prime components 
  of the zero ideal.
\end{prop}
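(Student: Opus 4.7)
The plan is to exploit the standard interval decomposition of a shellable simplicial complex and match it against the filtration $(M_i)$ on the Stanley--Reisner ring $A$.

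First I would recall the \emph{restriction function} $R\colon \{F_1,\ldots,F_m\}\to\Delta$ given by $R(F_i) := \{v\in F_i : F_i\setminus v \subseteq \Union_{j<i} F_j\}$, so that $r_i = \prod_{v\in R(F_i)} x_v$. Shellability yields two standard consequences I would cite: (a) $R(F_i)$ is the unique minimal face of $F_i$ not lying in $\Union_{j<i} F_j$, and (b) every face $G\in\Delta$ belongs to exactly one interval $[R(F_i),F_i]$ in the face poset, so $\Delta = \bigsqcup_{i=1}^m \{G : R(F_i)\subseteq G\subseteq F_i\}$. The crucial consequence of (a) is property $(\star)$: if $R(F_i)\subseteq F_k$, then $k\ge i$ (otherwise $R(F_i)$ would lie in $\Union_{j<i} F_j$, contradicting its definition).

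Next I would lift the interval decomposition to $A$. Writing a monomial as $x^u$ with $u\in\NN^V$ and $\mathrm{supp}(u)\in\Delta$, and setting $B_i := \CC\{x^u : R(F_i)\subseteq\mathrm{supp}(u)\subseteq F_i\}$, (b) yields $A = \bigoplus_i B_i$ as $\CC$-vector spaces, with $B_i = r_i\cdot\CC[x_v : v\in F_i]$. The central claim to prove is then $M_i = \bigoplus_{k\ge i} B_k$. The inclusion $\supseteq$ is immediate: any $x^u\in B_k$ with $k\ge i$ is divisible in $A$ by $r_k$, hence lies in $Ar_k\subseteq M_i$. For $\subseteq$, any monomial $x^u\in Ar_k$ has $R(F_k)\subseteq\mathrm{supp}(u)\in\Delta$; by (b), $\mathrm{supp}(u)$ lies in a unique interval $[R(F_\ell),F_\ell]$, so $R(F_k)\subseteq F_\ell$, and $(\star)$ forces $\ell\ge k\ge i$. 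Thus $x^u\in B_\ell$ with $\ell\ge i$.

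Quotienting, $M_i/M_{i+1}\iso B_i = r_i\cdot\CC[x_v : v\in F_i]$ as $\CC$-vector spaces, and this identification is $A$-linear for the $A$-action factoring through $A\onto A/\langle x_v : v\notin F_i\rangle$. Hence $M_i/M_{i+1} = A\cdot\overline{r_i}$ is cyclic with annihilator $\langle x_v : v\notin F_i\rangle$. Since the minimal primes of the Stanley--Reisner ring $A$ are exactly the ideals $\langle x_v : v\notin F\rangle$ as $F$ ranges over the facets of $\Delta$, the annihilators $\mathrm{ann}(\overline{r_i})$ run through all minimal primes exactly once as $i = 1,\ldots,m$. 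The only step requiring genuine combinatorial input is the inclusion $M_i\subseteq\bigoplus_{k\ge i} B_k$, which rests entirely on property $(\star)$; everything else is bookkeeping once the interval decomposition of $\Delta$ is in hand.
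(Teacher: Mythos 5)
Your proof is correct, and it proceeds by the expected route: the interval decomposition $\Delta = \bigsqcup_i [R(F_i),F_i]$ of a shellable complex, lifted to the monomial $\CC$-basis of $A$, with property $(\star)$ (minimality of the restriction face) as the one nontrivial step needed to show $M_i = \bigoplus_{k\ge i} B_k$. The paper itself asserts the proposition as ``straightforward'' and supplies no proof, so there is nothing to compare against; your argument correctly fills in the omitted reasoning.
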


In this section we have a similar situation, but requiring 
three directions of generalization:
\begin{itemize}
\item The module $M_1$ is rank $1$ and torsion-free, but not actually free.
\item The ring $A$ also has to degenerate;
  it is not just the module $M_1$ that degenerates 
  (to its associated graded).
\item The components of the degenerate scheme $\Spec\, \init(A)$ are
  still toric varieties%
  \footnote{The equations defining toric varieties are binomial,
    but can be of high degree, and toric varieties are very rarely
    complete intersections.}
  and complete intersections, 
  but aren't quite coordinate subspaces; while some of their defining
  equations are coordinates (as in the Stanley-Reisner case), others
  are quadratic binomials (a new phenomenon).
\end{itemize}

\junk{some explanations are needed here, of what we're trying to achieve.
the word ``toric varieties'' should probably appear somewhere in this section}
For $S \in PP(a,b,c)$, let $F_S := \init(A)\cdot p_S$
be the cyclic submodule of $\init(F'_r)/\init(M'_r)$ 
generated by the element $p_S$. Since $\init(F'_r)$ is freely
generated by the $\{p_S\}$ as an $\init(A)$-module, 
the quotient $\init(F'_r)/\init(M'_r)$ is the sum
$\sum_{S\in PP(a,b,c)} F_S$. But much more is true:

\begin{thm}\label{thm:directsum}
  The $\init(A)$-module $F'_r/\init(M'_r)$ is the {\em direct}\/ sum
  $\bigoplus_{S\in PP(a,b,c)} F_S$. 
  Each $F_S$ is supported on a single component of $\Spec\, \init(A)$,
  and this gives a correspondence between $PP(a,b,c)$ and
  the components of $\Spec\, \init(A)$. 
\end{thm}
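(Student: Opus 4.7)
The theorem has two parts: an algebraic splitting and a geometric identification, with the main difficulty in the latter.

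\textbf{Direct sum.} Theorem~\ref{thm:leadTerms} lists generators of $\init(M'_r)$ each of the form (polynomial in $B,C$)$\cdot p_S$ for a \emph{single} $S\in PP(a,b,c)$. Since $\init(F'_r)=\bigoplus_S \init(A)\cdot p_S$ is free on $\{p_S\}_{S\in PP(a,b,c)}$, these generators lie in distinct summands, and so
\[ \init(M'_r)=\bigoplus_{S\in PP(a,b,c)} I_S\cdot p_S, \qquad \init(F'_r)/\init(M'_r)=\bigoplus_S(\init(A)/I_S)\cdot p_S=\bigoplus_S F_S, \]
where $I_S\trianglelefteq\init(A)$ is the ideal generated by the elements of Theorem~\ref{thm:leadTerms} indexed by the dimers of $D_S$. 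Because $p_S$ is a nonzerodivisor on the free module $\init(F'_r)$, the annihilator of $F_S$ is exactly $I_S$, so $F_S$ is supported on $V_S:=V(I_S)\subseteq\Spec\,\init(A)$.

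\textbf{Components.} I then show that each $V_S$ is an irreducible component of $\Spec\,\init(A)$ and that the assignment $S\mapsto V_S$ is injective. Concretely $V_S$ sits inside the coordinate flat in which the $ac$ variables $B_{ij}$ and $ab$ variables $C_{jk}$ indexed by the type-$B$ and type-$C$ lozenges of $D_S$ vanish, cut out there by the $bc$ quadratic binomials associated to the type-$BC$ lozenges (with the $\star$-coordinates free). Granting that these are all the defining equations, $V_S$ is a toric complete intersection of degree $2^{bc}$. Different $S$ produce different patterns of vanishing coordinates (one recovers $D_S$ from the list of zeroed $B$'s and $C$'s), so the $V_S$ are pairwise distinct. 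Summing degrees gives $2^{bc}|PP(a,b,c)|$, which by Proposition~\ref{prop:orbvardeg} is the degree of $\mu(CX^{c\times b})$ and hence of $\Spec\,\init(A)$; no other components can therefore exist.

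\textbf{Main obstacle.} The subtle step is checking that the inherited relations from $\init(A)$, namely the initial forms of $BC=0$ and of the rank conditions $\mathrm{rank}(B)\leq b$, $\mathrm{rank}(C)\leq c$, impose no further constraints on $V_S$. Section~\ref{ssec:quad} already does this for $BC=0$: at each $(i,k)$ the initial form of $(BC)_{i,k}$ is either a multiple of a linear generator of $I_S$ (the boundary cases $i+k\leq a+1$ or $i+k\geq n+1$) or the binomial generator itself (the middle case, corresponding to a type-$BC$ lozenge at $(i,k)$). The analogous treatment of the rank conditions is what remains: I expect the $D_S$-pattern of vanishing $B_{ij}$'s to force $\mathrm{rank}(B)\leq b$ automatically, and symmetrically for $C$, via a combinatorial argument on the hexagon exploiting the non-intersecting-lattice-path structure of the dimer configuration. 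Once this redundancy is established, top-dimensionality of each $V_S$ and the degree count close out the proof.
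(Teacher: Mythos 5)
Your proposal has a genuine gap, and it also inverts the logical order that makes the paper's argument work.

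\textbf{Circularity.} Your direct-sum paragraph begins by citing Theorem~\ref{thm:leadTerms} for a complete generating set of $\init(M'_r)$. But in the paper, Theorem~\ref{thm:leadTerms} and Theorem~\ref{thm:grob} are proved \emph{simultaneously with and by means of} Theorem~\ref{thm:directsum}: what \S\ref{ssec:linear}--\ref{ssec:quad} establish is only that the listed elements \emph{lie in} $\init(M'_r)$, not that they generate it. Taking ``they generate'' as input makes the direct-sum statement trivial but is exactly what remains to be proved.

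\textbf{The ``main obstacle'' is the tell.} What you correctly worry about --- that the initial forms of $\mathrm{rank}(B)\leq b$, $\mathrm{rank}(C)\leq c$, and possibly $\init(I)$ more broadly, might impose further relations on $V_S$ --- is indeed a gap in the route you have set up. But the paper never addresses the rank conditions at all. It sidesteps them entirely. Because a direct combinatorial redundancy check of those relations is hard, the paper instead argues as follows: the inclusions shown in \S\ref{ssec:linear}--\ref{ssec:quad} yield a \emph{surjection} $\bigoplus_{S} A_S \onto \init(F'_r)/\init(M'_r)$, where $A_S$ is a priori defined as a quotient of the polynomial ring $\CC[B,C,\star]$ by those relations alone (with no $\init(I)$ terms appended). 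Proposition~\ref{prop:LS} shows each $A_S$ is a complete-intersection \emph{domain} of degree $2^{bc}$. Proposition~\ref{prop:orbvardeg} gives the degree $2^{bc}\,|PP(a,b,c)|$ of $\mu(CX^{c\times b})$, hence (since Hilbert series are preserved by the flat Gr\"obner degeneration) of the target. Degrees match, so by Lemma~\ref{lem:degreecheck} --- which crucially uses the \emph{domain} property of each $A_S$ to know that any nonzero kernel would be full-dimensional, hence drop the degree --- the surjection is an isomorphism. That single step delivers the direct sum, the identification $A_S \iso \init(A)/\mathrm{ann}(F_S)$ (so the rank conditions are automatically absorbed), and the Gr\"obnerness claims of Theorems~\ref{thm:grob} and \ref{thm:leadTerms} all at once.

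\textbf{What's salvageable.} Your component paragraph is in the right spirit --- the degree count $2^{bc}\,|PP(a,b,c)|$ is the engine --- but you deploy it one step too late, after assuming the structure of each $V_S$, rather than using it (via Lemma~\ref{lem:degreecheck}) to \emph{establish} that structure and the generation claim in the first place. Rearranged so the degree comparison comes first and the domain property of $A_S$ closes the kernel, your ``obstacle'' evaporates and the circularity disappears.
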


The rest of the section is devoted to its proof, 
and to determination of the individual $F_S$. 
We will then use this computation to finish the proof of theorem \ref{thm:grob},
and more importantly, to pave the way to proving (in \S \ref{sec:conclusion})
our conjectures \ref{conj:ourconj2} and \ref{conj:geomRS}
in the $c\times b$ rectangle case.

\subsection{The individual $F_S$}
Fix $S\in PP(a,b,c)$ for the rest of this subsection, which we will
usually think of as a dimer configuration
as in the Southeast picture in figure \ref{fig:loz}.
Let $H$ denote the set of hexagons in that picture, 
whose non-horizontal edges we corresponded (in \S \ref{ssec:wei})
with some of the $B,C$ ring generators. We computed $|H| = ab+ac+bc-a-b-c+1$
in \S \ref{ssec:wei}.

Equations (\ref{eq:initlin1}-\ref{eq:initlin2}) and \S \ref{ssec:quad}
show $F_S$ is a cyclic module over the ring
\[ A_S := \CC[B_{i,j},C_{j,k},\star_{i,k}] \ \bigg/\ \left\langle\ 
  \begin{matrix}
    B_{i,j} &\quad& \text{of type $B$ in $S$} \\
    C_{j,k} &\quad& \text{of type $C$ in $S$} \\
    B_{i,i+k-a-1}C_{i+k-a-1,k}+B_{i,i+k-a}C_{i+k-a,k}
    &\quad& \text{of type $BC$ in $S$} \\
  \end{matrix}
  \right\rangle
\]
who itself has $(a+b)(b+c) + (b+c)(c+a) + (c+a)(a+b)$ generators,
and one relation for each dimer in $S$ (a total of $ab+ac+bc$, 
as recorded in \S \ref{ssec:wei}).
More specifically, we have a natural map $A_S \to \init(A)/ann(F_S)$,
which we will soon show is an isomorphism. A key tool will be the following:

\begin{prop}\label{prop:hexgrading}
  Call a generator of $A_S$ \defn{relevant} if it appears on a 
  (non-horizontal) edge in the diagram of $H$ (including the half-edges
  around the boundary), but is not in $S$.
  There are $|H|+(a+c-1)+2b-ac = ab+bc+b$ relevant $B$ generators,
  and similarly $ac+bc+c$ relevant $C$ generators.
  Let $A'_S$ be the subring generated by those,
  suffering the $bc$ many quadratic binomial equations
  from the third group above.
  \junk{
    The ring $A_S$ has two kinds of irrelevant generators: $B,C$ variables
    that don't correspond to dimers in the $H$ region, and $\star$ variables.
    There are $b(b-1)$ such $B$ variables, 
    $c(c-1)$ such $C$ variables, and all $(c+a)(a+b)$ many $\star$ variables.
    Let $A'_S$ be the subring without them,
    leaving\footnote{It's no surprise that these numbers are close to
      the number of hexagons, since each hexagon has two $B$ dimers and
      {\em most}\/ of the $B$ dimers are in two hexagons; same for $C$.}
    only $ab+ac+bc+b$ many $B$ variables and $ab+ac+bc+c$ many $C$ variables.
    The ideal contains $ac$ and $ab$ of those, respectively,
    leaving $ab+ac+2bc+b+c$ nontrivial variables suffering $bc$ many 
    quadratic binomial equations.}
  
  Let $H_+$ be $H$ plus the partial hexagons around the outside,
  and $L = \ZZ^{H_+}$, the space of $\ZZ$-valued functions on $H_+$.
  Then $A'_S$ has an $L$-grading, where the $B$ or $C$ generator
  corresponding to a non-horizontal edge $\gamma$ is given weight 
  $f_\gamma \in L$, defined by
  \[ f_\gamma(h) =
  \begin{cases}
    +1 &\text{if $h$ is the (partial) hexagon above $\gamma$} \\
    -1 &\text{if $h$ is the (partial) hexagon below $\gamma$} \\
    0 &\text{otherwise.}
  \end{cases}
  \]
  Moreover, this grading is \defn{fine,} meaning that its homogeneous
  components are $1$-dimensional.
\end{prop}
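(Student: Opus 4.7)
The plan has two parts: verifying well-definedness of the $L$-grading, and then establishing fineness.

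For well-definedness, the task is to show that each quadratic binomial relation is homogeneous. At a $BC$-lozenge of $S$ at position $(i,k)$, the horizontal edge separates a pair of hexagons $h_L, h_R \in H_+$ (left and right of the horizontal edge, distinguished by their $y$-coordinate via \eqref{eq:yB}--\eqref{eq:yBC}), and the four edges $B_{i,i+k-a-1}$, $C_{i+k-a-1,k}$, $B_{i,i+k-a}$, $C_{i+k-a,k}$ appearing in the binomial are precisely the non-horizontal edges incident to the two endpoints of the horizontal edge. A direct check using the definition of $f_\gamma$ shows that both monomials $B_LC_L$ and $B_RC_R$ have the same multidegree: the $\pm 1$ contributions to $h_L$ and $h_R$ cancel internally in each monomial (each endpoint contributes one edge to each adjacent hexagon with opposite signs), and the four ``outer'' hexagons enter the two sides symmetrically.

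For fineness, let $M_S \subseteq L$ be the submonoid generated by the weights $\{f_\gamma\}$. Rescale the generators by $\pm 1$ so that each binomial becomes a monomial-equals-monomial relation; this is possible since one can consistently orient the signs across the tiling (the $BC$-lozenges are disjoint in the dimer configuration). After this rescaling, there is a natural surjection $A'_S \twoheadrightarrow \CC[M_S]$ onto the monoid algebra, which is manifestly finely $L$-graded with each homogeneous component spanned by a single character. Fineness of the $L$-grading on $A'_S$ is therefore equivalent to injectivity of this surjection.

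To prove injectivity, I would set up a straightening algorithm reducing each monomial to a normal form: for each $BC$-lozenge of $S$, always eliminate the ``right'' pair $B_{i,i+k-a}C_{i+k-a,k}$ in favor of the ``left'' pair $B_{i,i+k-a-1}C_{i+k-a-1,k}$. This reduction terminates (it strictly decreases a suitable monomial order built from the $j$-coordinates) and is confluent since the binomials are disjoint in their supports. It then remains to show that two normal-form monomials sharing the same weight in $L$ must coincide, which I would prove by a peel-off induction on the hexagons of $H_+$ ordered by decreasing $y$-coordinate: the weight at the top hexagon is a $\pm 1$-linear combination of the exponents on the at most six edges incident to it, and only the two ``upper'' non-horizontal edges are not shared with lower hexagons, so that their exponents are determined; peel off the hexagon and iterate.

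The main obstacle is this last step: ensuring the peel-off sweep works uniformly at the three possible local configurations around a hexagon (dictated by the three lozenge orientations of $S$) and, most delicately, at the partial hexagons along the boundary of $H_+$, where one must check that the reduced number of incident edges still carries enough information about the grading to determine the exponents being peeled off.
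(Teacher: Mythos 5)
Your check that the $L$-grading is well-defined is correct and matches the paper's argument. For fineness, though, two of your claims break down. The binomials are \emph{not} disjoint in their supports: a non-horizontal edge has two endpoints, and each may be an endpoint of a (different) horizontal dimer of $S$, so that edge lies in the four-edge support of two distinct binomials. Your justification for confluence therefore collapses (as does the reason you give for the sign-rescaling being possible, though that fact is rescuable by, e.g., declaring $\mathrm{sign}(B_{i,j}) = (-1)^j$). More fundamentally, the top-to-bottom peel-off is underdetermined: an interior hexagon $h$ contributes the single linear constraint $f(h)=g(\mathrm{SW})+g(\mathrm{SE})-g(\mathrm{NW})-g(\mathrm{NE})$, so even if the two ``lower'' exponents are already known you have one equation for two ``upper'' unknowns. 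This is an information shortfall already in the interior, not a boundary-case technicality to be chased away.

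The paper avoids both problems. It works in the exponent lattice directly rather than via a rewriting system: a single left-to-right pass adds multiples of the relation vectors $r_\gamma$ (one per horizontal dimer of $S$) so as to zero out, for each dimer, the $B$-edge on its West side; because this pass never revisits an already-zeroed edge, no confluence check is needed. The sweep then runs West-to-East rather than top-to-bottom: each West-boundary partial hexagon of $H_+$ is incident to only two non-horizontal edges (its NE and SE), and the preprocessing plus the dimer condition forces $g=0$ on one of them, so the single equation $f(h)=0$ determines the other. Stripping off the westernmost column leaves another hexagon-of-hexagons with modified edge lengths, and the argument iterates.
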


\begin{proof}
  Each hexagon in $H$ has a type $B$ edge on its Northwest side, 
  which gives all the $B$ edges except for the $a+c-1$ of them on the 
  East and Southeast,
  and $2b$ one-ended edges coming off the Northeast and Southwest sides.
  Of those, there are $ac$ in $S$ we must remove.
  Flip left/right for the type $C$ edges statement.
  \junk{
    First we count the irrelevant $B$ and $C$ generators. 
    Observe that the $(a,b,c)$ region in the Southeast of figure \ref{fig:loz} 
    (a hexagon of hexagons) is the intersection of the corresponding 
    $(a+b,0,c+b)$ and $(a+c,b+c,0)$ regions (each being vertical
    parallelograms of hexagons). Those two larger regions contain 
    dimers corresponding to {\em all}\/ the $B$ and $C$ generators, and they 
    add $2{b\choose 2}$, $2{c\choose 2}$ many such dimers, respectively.
  }
  
  We have to check that the quadratic binomial relation
  coming from a horizontal edge is $L$-homogeneous:
  \[
  \tikz[scale=0.65,baseline=-3pt]{\dimerex\node at (0.5,0.5) {\Large$+$};\node at (0.5,-0.5) {\Large$-$};}
  \quad=\quad
  \tikz[scale=0.65,baseline=-3pt]{\dimerex\node at (0.5,0.5) {\Large$+$};\node at (1.8,0) {\Large$-$};}
  \quad+\quad 
  \tikz[scale=0.65,baseline=-3pt]{\dimerex\node at (1.8,0) {\Large$+$};\node at (0.5,-0.5) {\Large$-$};}
  \quad=\quad 
  \tikz[scale=0.65,baseline=-3pt]{\dimerex\node at (0.5,0.5) {\Large$+$}; \node at(-0.8,0) {\Large$-$};}
  \quad+\quad 
  \tikz[scale=0.65,baseline=-3pt]{\dimerex\node at(-0.8,0) {\Large$+$};\node at (0.5,-0.5) {\Large$-$};}
  \]

  The exponent vector of a Laurent monomial $m$ is 
  a $\ZZ$-valued function $g$ on $H_+$'s nonhorizontal edges, vanishing on $S$
  (and on the half-edges around the boundary).
  Each horizontal dimer $\gamma \in S$ gives a relation of the form
  $ B_{i,i+k-a-1}C_{i+k-a-1,k}/B_{i,i+k-a}C_{i+k-a,k} = -1 $,
  whose exponent vector we call $r_\gamma$.
  To show the grading is fine (as could have been predicted 
  from \cite{EisenbudSturmfels}), we need to show that 
  $$ f := \sum_{\text{edges }\gamma} g(\gamma) f_\gamma = 0 $$
  implies that $g$ is in the span of the $(r_\gamma:\ \gamma \in S$
  horizontal$)$.

  We can use the $(r_\gamma)$ to modify $g$ as follows, from left to right: 
  for each horizontal $\gamma \in S$, connected to some $B$-edge $\gamma'$ 
  on its West side (necessarily not in $S$), add $g(\gamma') r_\gamma$ to $g$
  thereby making the new value at $\gamma'$ be $0$.
  Now we can assume that $g$ vanishes not only on $S$, but on each $B$-edge
  to the Southwest of a horizontal $S$-edge. 
  In short, if a $B$-edge is has nonzero $g$-value, then the $C$-edge
  below it has zero $g$-value (since it's in $S$), unless both are
  on the right boundary of $H$ (so don't have a full horizontal edge
  giving an $r_\gamma$ to use).

  The displayed equation above says that in each partial hexagon $h\in H_+$, 
  the sum of the NW and NE $g$-values equals the sum of the SW and SE.
  Each partial hexagon $h$ on the West side of $H_+$ (there are initially
  $a+1$ of them) has only
  NE and SE contributors to $f(h)$, and we've just shown that one of them
  vanishes. Hence the other one does too. Rip off this vertical line
  of (initially $a$ many) hexagons from the West side and repeat the argument.
  The new shape will again be a hexagon of hexagons, albeit not with
  edge-lengths $(a,b,c,a,b,c)$, but that doesn't affect the argument.
\junk{
  If $h$ isn't one of the hexagons on the right side of $H$, 
  then only one of the NW and NE values is nonzero, likewise
  only one of the SW and SE, by the reduction in the paragraph above.
  Hence the nonzero value (if any) on the left side equals $\pm$ the
  nonzero value (if any) on the right side. 
  Following the propagation of these values, we get set of flux lines
  that go all the way from the left side to the right side of $H_+$.
  (Unlike the NILP in the Southwest picture in figure \ref{fig:loz},
  these flux lines connect the centers of the hexagons, not their vertices.)

  The partial hexagons $h$ on the NW and SW sides of $H_+$ each have
  $f_\gamma(h)\neq 0$ for only one $\gamma$ (or none, for the one on
  the top and the one on the bottom); hence the flux lines cannot start there;
  they can only start from the $a-1$ half-hexagons on the West side.

  ...  

  Now consider a $B$-edge $\beta$ on the Northwest side, i.e. connected
  West and North-northwest to half-edges, and let $h$ be the half-hexagon
  bounded by those three. Then $f_\beta$ is the only contributor to $f(h)$.
  If $\beta\in S$, then its label is necessarily $0$

  We now start deconstructing $H_+$, 
  ripping off each top NW $B$-edge 
  (along with the two half-edges connected above it, making the two 
  edges connected below it into half-edges; since they are necessarily
  in $S$ their $g$-values are already $0$).
  Repeat until every top horizontal edge is in $S$.

  ...

  Let $f \in L$ be the weight of a monomial $m$,
  whose exponent vector we think of as an $\NN$-valued function $g$ 
  on $H$'s $ab+ac+2bc-2a-b-c+2$ dimers that are neither horizontal nor in $S$.

  There are four non-horizontal dimers $\gamma$ whose $f_\gamma$ contribute
  to a given hexagon $h\in H$; to make the (linear) map $\{g\} \to \{f\}$
  one-to-one, we need to rigidify the situation enough to assign blame
  to one of the four.

  Let $\gamma \in S$ be a horizontal dimer, and $\gamma_{NE},\gamma_{SE}$ be
  the dimers connected to its East end (hence neither is in $S$).
  We may assume that $g(\gamma_{NE}), g(\gamma_{SE})$ are not both positive,
  for if they are the quadratic relation on $A'_S$ corresponding to $\gamma$
  lets us decrement them while incrementing the exponents on the dimers 
  at $\gamma$'s West end. This changes $g$, but not the monomial $m$
  (up to sign).

  Now let $h\in H$ be an Easternmost hexagon with $f(h)\neq 0$; by this
  assumption (and induction) we know that $g=0$ on the two dimers on
  the East side of $h$. \rem{grr, need to include the regions outside $H$?}
  By the previous paragraph, we know that $g=0$ on one of the
  two dimers on the West side of $h$, either because one dimer is in $S$
  or because $g$ has been assumed so. By the sign of $f(h)$ we can 
  determine which dimer has $g\neq 0$, and modify $f,g$ to $f',g'$
  vanishing on $h$ and that dimer respectively (with no other values 
  of $g$ changed). Now use induction to say that $f'$ uniquely determines $g'$,
  and hence $f$ determines $g$.}
\end{proof}

In particular,
\begin{eqnarray*}
  \dim A'_S &\geq& \#\text{generators} - \#\text{relations} \\
  &=& (ab+bc+b) + (ac+bc+c) - bc \quad=\quad ab+ac+bc+b+c
\end{eqnarray*}
with equality iff it is a complete intersection of those $bc$
quadric hypersurfaces. So now we compute its dimension (to show that
this inequality is indeed strict).

\junk{
\rem{Really, we should use some generic functional on $L$ to define
  the weighting degenerating $A$ and $F'_r$, so that we can 
  guarantee in advance that all $\init(A)$ is $L$-graded, not just 
  these related rings $A_S$.}

\rem{Guesses: (1) $wt(p_S) := \sum_{\gamma\in S} f_\gamma$. For $S$ the 
  empty plane partition, this is $-1$ at the corner and $0$ elsewhere.
  Adding a box changes the local values by 
  $\begin{matrix}    &+& \\ -&&- \\ &+2& \\ -&&- \\ &+&  \end{matrix}$.
  (But what should it be for $p_S, S\notin PP(a,b,c)$?)
  (2) The functional is any function on hexagons that's sufficiently
  convex in the up/down direction; just a hunch.}
}

Let $L_S \leq L$ be the set of actual $L$-gradings of monomials 
occurring in $A'_S$.
To analyze $L_S$, we first consider its perp (with respect to the dot product 
on $L$ where the hexagon vectors $\{\vec v_h\ :\ h\in H_+\}$ are orthonormal). 

\begin{prop}\label{prop:LS}
  \begin{enumerate}
  \item $\dim L = ab+ac+bc+a+b+c+1$.
  \item $L_S$ is a cone (i.e. closed under $+$), 
    and $A'_S$ is its monoid algebra, a domain of dimension $\dim L_S$.
  \item $f\in L_S^\perp$ iff it is constant on the
    regions of $S \cup \{$all horizontal edges$\}$.
  \item Hence $\dim L_S^\perp = a+1$ and $\dim L_S = ab+ac+bc+b+c.$
  \item $A'_S$ and $A_S$ are complete intersections, of degree $2^{bc}$.
  \end{enumerate}
\end{prop}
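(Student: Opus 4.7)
The plan is to tackle the five claims in order, with nearly all the substance concentrated in Part~(4).

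For Part~(1), I would just count: $|H|=ab+ac+bc-a-b-c+1$ was recorded in \S\ref{ssec:wei}, and the boundary of an $(a,b,c)$-hexagon has sides of lengths $a,b,c,a,b,c$ contributing $2(a+b+c)$ partial hexagons to $H_+\setminus H$, summing to $ab+ac+bc+a+b+c+1$. For Part~(2), $L_S$ is closed under $+$ since it is the set of exponent vectors of monomials, and multiplication of monomials adds exponents. Because the $L$-grading on $A'_S$ is fine (Prop.~\ref{prop:hexgrading}), each $f\in L_S$ indexes a one-dimensional homogeneous component, so $A'_S$ has a $\mathbb{C}$-basis $(e_f)_{f\in L_S}$ satisfying $e_f\cdot e_g=\pm e_{f+g}$; such a twisted monoid algebra on a cancellative submonoid of a lattice is a domain of Krull dimension $\dim L_S$ (and the signs can in any case be absorbed by an appropriate change of basis).

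Next, for Part~(3) I would unwind the definitions: $L_S$ is generated by the vectors $f_\gamma$ ranging over non-horizontal non-$S$ edges $\gamma$, and each $f_\gamma$ takes value $+1$ on the hexagon above $\gamma$ and $-1$ on the one below. Hence $f\in L_S^\perp$ iff $f$ agrees on the two hexagons flanking every such $\gamma$, i.e.\ iff $f$ is locally constant on the complement in $H_+$ of the ``blocking'' edge set, which consists precisely of the non-horizontal $S$-dimers together with all horizontal edges; this blocking set is exactly $S\cup\{\text{horizontal edges}\}$, since $S$'s horizontal dimers are already in the second set. For Part~(4), I would invoke the NILP encoding of $S\in PP(a,b,c)$ (Fig.~\ref{fig:loz}): the non-horizontal dimers of $S$ assemble into $a$ non-intersecting lattice paths running from one side of the hexagonal region to the opposite side, and the key combinatorial step is to verify that these paths together with the horizontal-edge lattice partition $H_+$ into exactly $a+1$ connected ``strips'' (the two extreme regions plus the $a-1$ between-path regions). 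Then $\dim L_S^\perp=a+1$ and Part~(1) gives $\dim L_S=|H_+|-(a+1)=ab+ac+bc+b+c$.

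Finally, for Part~(5) I would compare generators to relations using Prop.~\ref{prop:hexgrading}: $A'_S$ is presented on $(ab+bc+b)+(ac+bc+c)=ab+ac+2bc+b+c$ generators modulo exactly $bc$ quadratic binomials (one for each type-$BC$ dimer of $S$). Since $\#\text{generators}-\#\text{relations}=ab+ac+bc+b+c=\dim A'_S$ from Part~(4) via Part~(2), the quadrics must form a regular sequence, so $A'_S$ is a complete intersection of degree $2^{bc}$. The ring $A_S$ is obtained from $A'_S$ by freely adjoining the irrelevant (non-killed) $B$- and $C$-variables along with all $\star$-variables, none of which appear in any of the defining relations, so $A_S$ inherits the complete-intersection structure and has the same degree $2^{bc}$.

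The hardest part will be the $a+1$-strip count in Part~(4): I expect the cleanest route is to use the NILP/lozenge bijection to exhibit the strips as the ``levels'' of the underlying plane partition, then verify by a local check inside each hexagon of $H$ that non-horizontal non-$S$ edges always connect hexagons lying within a single strip, while the NILP paths plus the horizontal edges together do separate distinct strips.
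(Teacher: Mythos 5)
Your proposal reproduces the paper's proof essentially step by step: the same $(a+1,b+1,c+1)$-style count for $|H_+|$ in Part~(1) (you count $H$ plus boundary directly, which is equivalent), the same reading of $L_S^\perp$ as locally constant functions in Part~(3), the same ``contract horizontal edges to get the NILP picture, so $a$ paths give $a+1$ strips'' observation in Part~(4), and the same $\#\text{gens}-\#\text{rels}=\dim$ regular-sequence argument in Part~(5). The one place you are slightly more cavalier than the paper is Part~(2): the statement that ``multiplication of monomials adds exponents'' and hence $L_S$ is a cone silently assumes that the image in $A'_S$ of a product of two nonvanishing monomials is again nonzero, which is not automatic for a quotient by a binomial ideal; the paper justifies this in one sentence by noting that the defining binomials don't involve the killed variables and only \emph{rewrite} monomials (up to sign), so no monomial in the surviving variables can be sent to zero. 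Your ``twisted monoid algebra'' framing of the domain argument is a slightly cleaner packaging of the Eisenbud--Sturmfels input, but it rests on the same unstated nonvanishing fact.
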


\begin{proof}
  \begin{enumerate}
  \item 
    The number of partial hexagons in the $H_+$ for $(a,b,c)$
    is the number of hexagons that $H$ would have for $(a+1,b+1,c+1)$, 
    namely $(a+1)(b+1)+(a+1)(c+1)+(b+1)(c+1)-(a+1)-(b+1)-(c+1)+1$
    or $ab+ac+bc+a+b+c+1$.
  \item 
    The binomial relations defining $A'_S$ don't involve any
    of the variables being killed in the linear relations. 
    So any monomial $m$ in the non-killed variables must be nonzero in $A'_S$,
    since each binomial relation just lets us rewrite $m$ as 
    another monomial (up to sign). Hence if $f_1,f_2 \in L$ come from
    nonzero monomials $m_1,m_2$, then $m_1 m_2$ is nonzero also and
    has $L$-grading $f_1+f_2$.

    By \cite{EisenbudSturmfels}, if $A'_S$ were not a domain then it
    would satisfy a relation $m(p-q)=0$ where $m,p,q$ are monomials.
    But then $mp,mq$ would be monomials with the same $L$-grading,
    as was just now forbidden in proposition \ref{prop:hexgrading}.
    The dimension statement is standard in toric geometry \cite{Fulton-TVs}.
  \item Each non-horizontal dimer $\gamma$ {\bf not} in $S$ gives a
    nonvanishing generator of $A_S$, hence a vector $f_\gamma \in L$.
    To be perpendicular to that, $f\in L_S^\perp$ needs to take on the
    same value on the two hexagons that would have been separated by $\gamma$.
  \item 
  Looking at the bottom row of Fig.~\ref{fig:loz}, we notice that including
  all horizontal edges has the same topological effect as contracting
  them all to produce a NILP configuration out of a dimer configuration. 
  In particular, since there are $a$ NILPs,
  the number of regions separated by occupied edges is obviously $a+1$.
  Then subtract this from $\dim L$ computed in (1).
  \item The dimension of $A'_S$ is at least $ ab+ac+bc+b+c $,
    as we checked just before the proposition.
    Since by parts (1,4) that is the actual dimension of $A'_S$,
    it is a complete intersection,
    so its degree is the product of the degrees of its defining equations.
    There is one degree $2$ equation for each horizontal dimer,
    of which there are $bc$ (as seen in \S \ref{ssec:wei}).
    Since $A_S$ is $A'_S$ tensor a polynomial ring, it too is 
    a complete intersection of this degree $2^{bc}$.
  \end{enumerate}
  \vskip -.2in
\end{proof}

\subsection{The total initial module}\label{ssec:total}
So far we have a surjection
\begin{align*}
  \bigoplus_{S\in PP(a,b,c)} A_S &\onto\ \init(F'_r)/\init(M'_r) \\
  (a_S \in A_S:\ S\in PP(a,b,c)) &\mapsto \sum_{S\in PP(a,b,c)} a_S p_S
\end{align*}
that we need to show is an isomorphism. 
We will use a module-theoretic
extension of the argument in \cite[lemma 1.7.5]{KM-Schubert}: 

\begin{lem}\label{lem:degreecheck}
  Both sides of the surjection above are graded modules over 
  the polynomial ring $\CC[B_{i,j},C_{j,k},\star_{i,k}]$. 
  Hence we can speak of their degrees, 
  and since their supports are of the same dimension (namely $ab+ac+bc$), 
  the surjection gives an inequality on degrees.
  If the map is not an isomorphism, then this inequality
  on degrees is strict.
\end{lem}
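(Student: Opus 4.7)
The plan is to carry out a standard degree-comparison argument, adapted from \cite[lemma 1.7.5]{KM-Schubert} from the ring setting to the module setting.

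First, to make sense of degrees, I would verify that the surjection is one of $\NN$-graded $\CC[B_{i,j},C_{j,k},\star_{i,k}]$-modules. On the target, the grading is inherited from the $\NN$-grading of the homogeneous coordinate ring in~(\ref{eq:presentation}), under which each Pl\"ucker monomial $p_S$ sits in degree $a$. On the source, I would endow each summand $A_S$ with the shifted grading that places its cyclic generator $1\in A_S$ in degree $a$, so that $1_{A_S}\mapsto p_S$ is degree-preserving. The polynomial ring $\CC[B,C,\star]$ acts on both sides through its quotient maps to $\init(A)$ and to each $A_S$.

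Next, I would check the equality of support dimensions. By flatness of Gr\"obner degeneration, $\Spec\,\init(A)$ has the same Krull dimension as $\Spec A=\mu(CX^{c\times b})$, and each $A_S$ sits as a full-dimensional component of $\Spec\,\init(A)$ by proposition~\ref{prop:LS}(5). The target $\init(F'_r)/\init(M'_r)$ is supported on all of $\Spec\,\init(A)$ since it arises as the initial form of a torsion-free $A$-module of generic rank one on $\mu(CX^{c\times b})$. Hence both sides have support of the common dimension named in the statement.

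The degree inequality then follows from additivity of multiplicity: if $K$ denotes the kernel of the surjection, then from the exact sequence $0 \to K \to \bigoplus_S A_S \to \init(F'_r)/\init(M'_r) \to 0$ one obtains $\deg(\text{source}) = \deg(\text{target}) + \deg(K)$, with $\deg(K)=0$ precisely when $K$ has support of strictly smaller dimension than the source. The decisive point is that each $A_S$ is a \emph{domain} of the full dimension, a consequence of the fineness of the $L$-grading established in proposition~\ref{prop:hexgrading} (see proposition~\ref{prop:LS}(2)). A nonzero submodule $K\leq \bigoplus_S A_S$ must project nontrivially to some $A_S$, and its image there is a nonzero ideal of a domain of the full dimension, so its support has the full dimension; therefore $\deg(K)>0$ and the inequality $\deg(\text{source})\geq \deg(\text{target})$ is strict whenever $K\neq 0$.

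The main obstacle is the domain property of each $A_S$, but that is exactly what is furnished by propositions~\ref{prop:hexgrading} and~\ref{prop:LS}; once it is in hand, the rest of the argument is formal bookkeeping with graded Hilbert series.
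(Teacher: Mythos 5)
Your proposal is correct and takes essentially the same approach as the paper's proof: set up the grading with the appropriate degree shift, note that each $A_S$ is a domain of the common dimension (proposition~\ref{prop:LS}), and use additivity of the leading coefficient of the Hilbert polynomial along the short exact sequence to conclude that a nonzero kernel forces a strict degree drop. The only cosmetic difference is at the last step, where you observe that a nonzero $K$ must project onto a nonzero (hence full-dimensional) ideal of some $A_S$, whereas the paper observes directly that $K$ contains a full-dimensional submodule of the form $\bigoplus_S A_S k_S$; both routes invoke exactly the domain property of $A_S$ and reach the same conclusion.
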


\begin{proof}
  As with (quotients by) ideals, the degree of a graded module $M$ is
  defined as the leading coefficient of its
  Hilbert polynomial (times $\dim(supp(M))!$). 
  There is a minor annoyance that the map
  defined above only becomes a graded map if we shift the 
  grading on each $A_S$ summand by $\deg(p_S)$, but shifting the argument
  of the Hilbert polynomial doesn't change the leading term,
  so we can ignore this subtlety.

  If this map has an element $(k_S \in A_S)_{S\in PP(a,b,c)} \neq 0$
  in its kernel $K$, then the kernel contains the module $\bigoplus A_S k_S$.
  Since each $A_S$ is a domain by proposition \ref{prop:LS}, 
  this submodule of the kernel again has the dimension $ab+ac+bc$, so
  $\deg(RHS) = \deg(LHS) - \deg(K) < \deg(LHS)$.
\end{proof}

\begin{proof}[Proofs of theorems \ref{thm:directsum}, \ref{thm:grob},
  and \ref{thm:leadTerms}.]
  We already knew that $F'_r/in(M'_r)$ is generated by $(F_S)$,
  i.e. is a quotient of $\bigoplus F_S$, and that each $F_S$
  is supported only on the component $\Spec A_S$ of $\Spec A$.

  By proposition \ref{prop:orbvardeg}, the degree of $A_S$,
  hence also of its free rank $1$ module $F_S$, is $2^{bc}$.
  So $\bigoplus F_S$ has degree $2^{bc} |PP(a,b,c)|$.

  Meanwhile its quotient $F'_r/in(M'_r)$ is a degeneration of a
  rank $1$ sheaf over $\Spec A$, 
  which we calculated to be $2^{bc} |PP(a,b,c)|$ 
  in proposition \ref{prop:orbvardeg}. 
  Now we use lemma \ref{lem:degreecheck} to know that the
  map $\bigoplus F_S \onto F'_r/in(M'_r)$ is an isomorphism.
  This completes the proof of theorem \ref{thm:directsum}.

  To establish the Gr\"obnerness of theorems \ref{thm:grob}
  and \ref{thm:leadTerms}, we need to
  know that the family doesn't have any components supported only
  on the special fiber. But we've determined the components $(F_S)$ of the
  special fiber, and if we cut any of them down the degree would decrease
  below that of the general fiber.
\end{proof}

\section{Conclusion}\label{sec:conclusion}
We now use the results of the last two sections, in particular
Theorems~\ref{thm:grob}, \ref{thm:directsum} and Proposition~\ref{prop:LS}, 
to prove the remaining two Conjectures \ref{conj:ourconj2} and
\ref{conj:geomRS} (or more precisely, the equivalent
Conj.~\ref{conj:ourconj2'} and \ref{conj:geomRS'}), in the $(a,b,c)$ case.

First, we consider Conj.~\ref{conj:geomRS'}.

\subsection{Polynomiality and Conjecture~\ref{conj:geomRS'}}
According to Theorem~\ref{thm:grob}, if we wish to compute the 
Hilbert series of $\mu_* \sh_{c\times b}$,
we can instead use that of the degenerated $A$-module $F'_r/M'_r$. 
The latter, according to Theorem~\ref{thm:directsum}, is a direct sum
$\bigoplus_{S\in PP(a,b,c)} F_S$ where each $F_S$ is free of rank $1$
over a complete intersection $A_S$.

As a result, we have the following formula:
\begin{multline}\label{eq:final}
\mu_*[\sh_{c\times b}]=
\prod_{\substack{1\le i<j\le a+b\\\text{or}\\a+b+1\le i<j\le a+2b+c\\\text{or}\\a+2b+c+1\le i<j\le 2n}}
(1-t\, z_i/z_j)\ 
\sum_{S\in PP(a,b,c)}
\prod_{\substack{i\in s_\ell,\\1\le \ell\le a}} z_i^{-1}
\\
\prod_{\substack{\text{lozenges $(i,k)$}\\\text{of type $BC$ of $S$}}} (1-t^2z_i/z_{k+a+2b+c})
\prod_{\substack{\text{lozenges $(i,j)$}\\\text{of type $B$ of $S$}}} (1-t\,z_i/z_{j+a+b})
\prod_{\substack{\text{lozenges $(j,k)$}\\\text{of type $C$ of $S$}}} (1-t\,z_{j+a+b}/z_{k+a+2b+c})
\end{multline}
where the prefactors correspond to the $0$'s of $M$, cf~\eqref{eq:block},
the monomial is the weight of the generator $p_S$ of $F_S$, and the other
factors come from the various equations of $A_S$.

Now we recall Thm.~\ref{thm:fpl-pp},
namely that lozenge tilings of a $a\times b\times c$ hexagon 
are in bijection with FPLs with connectivity $(a,b,c)$.
Therefore, \eqref{eq:final} is of the form conjectured in Conj.~\ref{conj:geomRS'}.

In order for the monomials $m_f$ to have the right form, we need to have
\begin{equation}\label{eq:prefac'}
\tilde m_r^{-1}=t^{-bc/2}
\prod_{i=1}^{a+b} z_i^{i-1}
\ 
\prod_{j=1}^{b+c} z_{j+a+b}^{a+j-1}
\ 
\prod_{k=1}^{a+c} z_{k+a+2b+c}^{b+k-1}
\end{equation}
and carefully rearranging the monomials, we find
\begin{multline}\label{eq:final2}
\tilde m_r^{-1}
\mu_*[\sh_{c\times b}]=
\prod_{\substack{1\le i<j\le a+b\\\text{or}\\a+b+1\le i<j\le a+2b+c\\\text{or}\\a+2b+c+1\le i<j\le 2n}}
(z_j-t\, z_i)\ 
\sum_{S\in PP(a,b,c)}
\prod_{\substack{\text{lozenges $(i,k)$}\\\text{of type $BC$ of $S$}}} t^{-1/2}(z_{k+a+2b+c}-t^2z_i)
\\[-4mm]
\prod_{\substack{\text{lozenges $(i,j)$}\\\text{of type $B$ of $S$}}} (z_{j+a+b}-t\,z_i)
\prod_{\substack{\text{lozenges $(j,k)$}\\\text{of type $C$ of $S$}}} (z_{k+a+2b+c}-t\,z_{j+a+b})
\end{multline}
which fits precisely with the form of the monomials
in Conj.~\ref{conj:geomRS}.

We also find
\begin{cor}\label{cor:facpol}
\[
\tilde m_r^{-1}\mu_*[\sh_{c\times b}]=
\prod_{\substack{1\le i<j\le a+b\\\text{or}\\a+b+1\le i<j\le a+2b+c\\\text{or}\\a+2b+c+1\le i<j\le 2n}}
(t^{-1/2}z_j-t^{1/2} z_i)\ 
\Phi_r
\]
where $\Phi_r$ is a symmetric polynomial in the $\{z_1,\ldots,z_{a+b}\}$, the $\{z_{a+b+1},\ldots,z_{a+2b+c}\}$, and the $\{z_{a+2b+c+1},\ldots,z_{2n}\}$.
\end{cor}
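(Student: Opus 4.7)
The plan is to extract $\Phi_r$ directly from the explicit formula \eqref{eq:final2} and to establish the symmetry claim using the exchange relation of $\Psi_r$ provided by the first $qKZ$ equation. First, I rewrite $(z_j-tz_i)=t^{1/2}(t^{-1/2}z_j-t^{1/2}z_i)$ and absorb the resulting power $t^{(\binom{a+b}{2}+\binom{b+c}{2}+\binom{a+c}{2})/2}$ of $t^{1/2}$ into the sum over $PP(a,b,c)$; this produces $\Phi_r$ as a polynomial and rewrites \eqref{eq:final2} in the form displayed in the corollary. Polynomiality of $\Phi_r$ is then immediate from \eqref{eq:final2}.

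The content of the corollary is the symmetry of $\Phi_r$ in each of the three groups of $z$-variables. I begin by observing that in the link pattern \eqref{eq:defabc2}, within each group $\{1,\ldots,a+b\}$, $\{a+b+1,\ldots,a+2b+c\}$, $\{a+2b+c+1,\ldots,2n\}$, no two adjacent indices $i,i+1$ are connected by a chord: inspection of the diagram shows that every chord endpoint in one group is paired with an endpoint in a different group. Then, using Conj.~\ref{conj:ourconj2'} (or equivalently Lemma~\ref{lem:sym0} fed through Conj.~\ref{conj:ourconj1'}), I write $\mu_*[\sh_r]=(1-t)^{n(n-1)}\tilde m_r\Psi_r$ and apply the first $qKZ$ equation to the $r$-component. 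Since $e_i$ acting on any link pattern produces one with $i,i+1$ connected, and since $r$ has $i,i+1$ not connected, every matrix element $(e_i)_{r,s}$ of the Temperley--Lieb generator vanishes. Consequently $\check R_i$ acts on the $r$-coordinate as the scalar $(z_i/z_{i+1}-t)/(1-tz_i/z_{i+1})$, yielding the exchange relation $\Psi_r(\ldots,z_{i+1},z_i,\ldots)=\frac{z_i-tz_{i+1}}{z_{i+1}-tz_i}\Psi_r(\ldots,z_i,z_{i+1},\ldots)$.

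Finally I match this against the transformations of the remaining factors under $z_i\leftrightarrow z_{i+1}$. From the explicit formula \eqref{eq:prefac'}, only the two powers $z_i^{i-1}z_{i+1}^{i}$ in $\tilde m_r^{-1}$ are affected, so $\tilde m_r$ is multiplied by the monomial $z_{i+1}/z_i$. Inspecting the prefactor $\prod(t^{-1/2}z_j-t^{1/2}z_i)$, every factor involving $z_i$ or $z_{i+1}$ except the single pair $(i,i+1)$ merely swaps partner with another such factor, while the pair $(i,i+1)$ contributes a multiplier $(z_i-tz_{i+1})/(z_{i+1}-tz_i)$. Combining all four contributions, under $z_i\leftrightarrow z_{i+1}$ the quantity $\Phi_r=\tilde m_r^{-1}\mu_*[\sh_r]/\prod(t^{-1/2}z_j-t^{1/2}z_i)$ is multiplied by $(z_i/z_{i+1})\cdot(z_{i+1}/z_i)\cdot\frac{z_i-tz_{i+1}}{z_{i+1}-tz_i}\cdot\frac{z_{i+1}-tz_i}{z_i-tz_{i+1}}=1$, hence is invariant. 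Since adjacent transpositions generate $S_{a+b}\times S_{b+c}\times S_{a+c}$, $\Phi_r$ is symmetric in each of the three groups.

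The main obstacle I foresee is the careful bookkeeping across $\tilde m_r$, $\Psi_r$, the prefactor, and the various $t^{1/2}$-normalization conventions; although each of the three exchange factors is individually straightforward, the three-way cancellation becomes transparent only once all conventions are consistently aligned.
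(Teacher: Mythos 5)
Your polynomiality argument is the same as the paper's (read off from \eqref{eq:final2}), and your bookkeeping of the $z_i\leftrightarrow z_{i+1}$ transformation of the four ingredients is correct. However, the symmetry argument is circular: you invoke Conjecture~\ref{conj:ourconj2'}, i.e.\ $\mu_*[\sh_r]=(1-t)^{n(n-1)}\tilde m_r\Psi_r$, to import the $qKZ$ exchange relation for $\Psi_r$ — but in the paper's logical order Corollary~\ref{cor:facpol} is proved {\em first}\/ and then used, twice, in the following subsection to establish Conjecture~\ref{conj:ourconj2'} (via the wheel condition and the dual-basis specializations). Your parenthetical ``or equivalently Lemma~\ref{lem:sym0} fed through Conj.~\ref{conj:ourconj1'}'' is not actually an equivalent move: what Lemma~\ref{lem:sym0} (via localization) gives you is the $\tau_i$-{\em invariance}\/ of $\pi_*[\sh_r]$, not the $\check R_i$ exchange relation of $\Psi_r$; the latter would still need Conjecture~\ref{conj:ourconj2'} to transfer to the geometric object.

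The paper's own proof avoids $\Psi_r$ entirely: it observes that $\pi_*[\sh_{c\times b}]$ is the pushforward to a point of a $GL(a+b)\times GL(b+c)\times GL(c+a)$-equivariant sheaf, hence is Weyl-invariant, i.e.\ symmetric in each of the three blocks of $z$-variables; the asymmetric monomial $\tilde m_r^{-1}$ and the within-group Vandermonde-type prefactor then cancel (one checks that $\tilde m_r^{-1}$ times the monomial part of $\prod_{\mathrm{within}}(1-t z_i/z_j)/(t^{-1/2}z_j-t^{1/2}z_i)$ is symmetric in each group), leaving $\Phi_r$ symmetric. This is exactly the content of your parenthetical alternative, made precise. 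If you want to keep the spirit of your argument, replace the appeal to Conjecture~\ref{conj:ourconj2'} with this direct equivariance argument (or equivalently with the localization consequence of Lemma~\ref{lem:sym} and Conjecture~\ref{conj:ourconj1'}, which {\em is}\/ available at this stage), and then your cancellation bookkeeping goes through; as written, though, the proof assumes what the corollary is being used to prove.
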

\begin{proof} (compare with \cite[Theorem 1]{artic31}).
The polynomiality (as opposed to Laurent polynomiality)
is explicit in \eqref{eq:final2}. The symmetry stems from the fact that $\pi_*[\sh_{c\times b}]$, the pushforward to a point
of a $GL(a+b)\times GL(b+c)\times GL(c+a)$-equivariant sheaf, possesses the required symmetry, the prefactor coming from 
\eqref{eq:pi2mu} and \eqref{eq:prefac'}.
\end{proof}

Now we move on to Conj.~\ref{conj:ourconj2}.
It should be noted that general equivariant localization arguments allow to show
that Conj.~\ref{conj:ourconj1} implies Conj.~\ref{conj:ourconj2} without any need to use Gr\"obner degenerations. However, in the case $(a,b,c)$,
is is simpler to prove Conj.~\ref{conj:ourconj2'}
directly using the degeneration, as we show now.

\subsection{Wheel condition, specialization and Conjecture~\ref{conj:ourconj2'}}
Given $1\le i_1<i_2<i_3\le N$, we investigate the specialization $z_{i_\ell}= t^{\ell-1}z$, $\ell=1,2,3$, of $\tilde m_r^{-1}\mu_*[\sh_r]$.
If two of the indices $i_1,i_2,i_3$ fall in the same interval
$\{1,\ldots,a+b\}$, $\{a+b+1,\ldots,a+2b+c\}$, $\{a+2b+c+1,\ldots,2n\}$,
then according to Corollary~\ref{cor:facpol}, the prefactor of 
$\tilde m_r^{-1}\mu_*[\sh_r]$ vanishes. Now assume $i_1=i$, $i_2=j+a+b$, $i_3=a+2b+c+k$, $1\le i\le a+b$, $1\le j\le b+c$, $1\le k\le a+c$.
With the notation of Cor.~\ref{cor:facpol}, due to the symmetry of $\Phi_r$, we can choose $i,j,k$ as we wish and
we may as well assume $j=i+k-a-1$.
Now in each plane partition appearing in \eqref{eq:final2}, there must be a lozenge of type $B$ at $(i,j)$, of type $C$ at $(j,k)$ or of type
$BC$ at $(i,k)$
(corresponding to the fact that one of the equations $B_{i,j}=0$, $C_{j,k}=0$ or $(BC)_{i,k}=0$ must be satisfied). Therefore $\Phi_r$ vanishes,
and we conclude that $\tilde m_r^{-1} \mu_*[\sh_r]$ satisfies the wheel condition of Thm.~\ref{thm:wheel}.

Next we investigate the ``dual basis'' specializations of the same theorem. 
Pick $s\in\LPN$ and set $z_i=t^{\pm 1/2}$ depending on whether $i\in s$ or not.
Because of the same prefactors in Cor.~\ref{cor:facpol}, if a $t^{-1/2}$ occurs
before a $t^{1/2}$ in each of the three same intervals mentioned above, the specialization is zero. Furthermore, $s\in\LPN$ implies that all 
these specializations
satisfy the ``Dyck path'' condition that in any sequence $(z_1,\ldots,z_\ell)$, $\ell=1,\ldots,N$, there must be more
$t^{-1/2}$ than $t^{1/2}$. This leaves the unique possibility 
\[
(z_1,\ldots,z_N)=(
\underbrace{t^{-1/2},\ldots,t^{-1/2}}_{a+b},
\underbrace{t^{1/2},\ldots,t^{1/2}}_{b},
\underbrace{t^{-1/2},\ldots,t^{-1/2}}_{c},
\underbrace{t^{1/2},\ldots,t^{1/2}}_{a+c}
)
\]
which is exactly the case $s=r$.
In the sum of \eqref{eq:final2}, a single term survives, corresponding to the ``full'' lozenge tiling of the type
\[
\def\a{2}\def\b{3}\def\c{4}
\def\pp{{2, 2, 2}, {2, 2, 2}, {2, 2, 2}, {2, 2, 2}}
\let\mymatrixcontent\empty
  \foreach \row in \pp{
    \foreach \h in \row {%
        \expandafter\gappto\expandafter\mymatrixcontent\expandafter{\h \&}%
      }%
    \gappto\mymatrixcontent{\\}%
  }
\def\XY{(1, 3), (2, 3), (3, 3), (1, 4), (2, 4), (3, 4), (1, 5), (2, 5), (3, 5), (1, 6), (2, 6), (3, 6)}
\def\X{(5, 4), (5, 5), (5, 6), (5, 7), (4, 4), (4, 5), (4, 6), (4, 7)}
\def\Y{(1, 1), (2, 1), (3, 1), (1, 2), (2, 2), (3, 2)}
\begin{tikzpicture}
\loz\XY\X\Y
\end{tikzpicture}
\]
and we compute, after cancellations of various powers of $t^{1/2}$:
\[
\tilde m_r^{-1} \mu_* [\sh_r]|_{\text{specialization above}} = (1-t)^{n(n-1)}(t^{1/2}+t^{-1/2})^{bc}
\]
which means $\mu_* [\sh_r]=
(1-t)^{n(n-1)} \tilde m_r\Psi_r$, thus proving Conj.~\ref{conj:ourconj2'}.

\junk{in principle these properties might've been obtained {\em before}
degenerating by using geometric vertex decomposition to write recurrence
relations, but not sure how to apply it to sheaves\dots
or by showing that conj 1 implies conj 2 by localization arguments.
}


\gdef\MRshorten#1 #2MRend{#1}%
\gdef\MRfirsttwo#1#2{\if#1M%
MR\else MR#1#2\fi}
\def\MRfix#1{\MRshorten\MRfirsttwo#1 MRend}
\renewcommand\MR[1]{\relax\ifhmode\unskip\spacefactor3000 \space\fi
\MRhref{\MRfix{#1}}{{\scriptsize \MRfix{#1}}}}
\renewcommand{\MRhref}[2]{%
\href{http://www.ams.org/mathscinet-getitem?mr=#1}{#2}}
\bibliographystyle{amsplainhyper}
\bibliography{biblio}

\def\cprime{$'$} \def\cprime{$'$}
\providecommand{\bysame}{\leavevmode\hbox to3em{\hrulefill}\thinspace}
\begin{thebibliography}{10}

\bibitem{Brion-flag}
M.~Brion, \emph{Lectures on the geometry of flag varieties}, Topics in
  cohomological studies of algebraic varieties, Trends Math., Birkh\"auser,
  Basel, 2005, pp.~33--85,
  \href{http://dx.doi.org/10.1007/3-7643-7342-3_2}{\path{doi}}. \MR{2143072
  (2006f:14058)}

\bibitem{CS-RS}
L.~Cantini and A.~Sportiello, \emph{Proof of the {R}azumov--{S}troganov
  conjecture}, J. Combin. Theory Ser. A \textbf{118} (2011), no.~5, 1549--1574,
  \href{http://arxiv.org/abs/1003.3376}{\path{arXiv:1003.3376}},
  \href{http://dx.doi.org/10.1016/j.jcta.2011.01.007}{\path{doi}}. \MR{2771600
  (2012f:82022)}

\bibitem{DF-qKZ-TSSCPP}
P.~Di~Francesco, \emph{Totally symmetric self-complementary plane partitions
  and the quantum {K}nizhnik--{Z}amolodchikov equation: a conjecture}, J. Stat.
  Mech. Theory Exp. (2006), no.~9, P09008, 14 pp,
  \href{http://arxiv.org/abs/cond-mat/0607499}{\path{arXiv:cond-mat/0607499}}.
  \MR{MR2278472 (2007j:82023)}

\bibitem{artic31}
P.~Di~Francesco and P.~Zinn-Justin, \emph{Around the {R}azumov--{S}troganov
  conjecture: proof of a multi-parameter sum rule}, Electron. J. Combin.
  \textbf{12} (2005), Research Paper 6, 27 pp,
  \href{http://arxiv.org/abs/math-ph/0410061}{\path{arXiv:math-ph/0410061}}.
  \MR{MR2134169 (2005m:82017)}

\bibitem{artic34}
\bysame, \emph{Quantum {K}nizhnik--{Z}amolodchikov equation, generalized
  {R}azumov--{S}troganov sum rules and extended {J}oseph polynomials}, J. Phys.
  A \textbf{38} (2005), no.~48, L815--L822,
  \href{http://arxiv.org/abs/math-ph/0508059}{\path{arXiv:math-ph/0508059}},
  \href{http://dx.doi.org/10.1088/0305-4470/38/48/L02}{\path{doi}}.
  \MR{MR2185933 (2006m:82019)}

\bibitem{artic41}
\bysame, \emph{Quantum {K}nizhnik--{Z}amolodchikov equation, {T}otally
  {S}ymmetric {S}elf-{C}omplementary {P}lane {P}artitions and {A}lternating
  {S}ign {M}atrices}, Theor. Math. Phys. \textbf{154} (2008), no.~3, 331--348,
  \href{http://arxiv.org/abs/math-ph/0703015}{\path{arXiv:math-ph/0703015}},
  \href{http://dx.doi.org/10.1007/s11232-008-0031-x}{\path{doi}}.

\bibitem{artic27}
P.~Di~Francesco, P.~Zinn-Justin, and J.-B. Zuber, \emph{A bijection between
  classes of fully packed loops and plane partitions}, Electron. J. Combin.
  \textbf{11} (2004), no.~1, Research Paper 64, 11 pp,
  \href{http://arxiv.org/abs/math/0311220}{\path{arXiv:math/0311220}}.
  \MR{MR2097330 (2006d:05021)}

\bibitem{Eisenbud}
D.~Eisenbud, \emph{Commutative algebra}, Graduate Texts in Mathematics, vol.
  150, Springer-Verlag, New York, 1995, With a view toward algebraic geometry,
  \href{http://dx.doi.org/10.1007/978-1-4612-5350-1}{\path{doi}}. \MR{1322960}

\bibitem{EisenbudSturmfels}
D.~Eisenbud and B.~Sturmfels, \emph{Binomial ideals}, Duke Math. J. \textbf{84}
  (1996), no.~1, 1--45,
  \href{http://dx.doi.org/10.1215/S0012-7094-96-08401-X}{\path{doi}}.
  \MR{1394747}

\bibitem{Fulton-TVs}
W.~Fulton, \emph{Introduction to toric varieties}, Annals of Mathematics
  Studies, vol. 131, Princeton University Press, Princeton, NJ, 1993, The
  William H. Roever Lectures in Geometry,
  \href{http://dx.doi.org/10.1515/9781400882526}{\path{doi}}. \MR{1234037}

\bibitem{M2}
D.~Grayson and M.~Stillman, \emph{Macaulay2, a software system for research in
  algebraic geometry}, Available at \url{http://www.math.uiuc.edu/Macaulay2/}.

\bibitem{Kasa-wheel}
M.~Kasatani, \emph{Subrepresentations in the polynomial representation of the
  double affine {H}ecke algebra of type {${\rm GL}_n$} at
  {$t^{k+1}q^{r-1}=1$}}, Int. Math. Res. Not. (2005), no.~28, 1717--1742,
  \href{http://arxiv.org/abs/math/0501272}{\path{arXiv:math/0501272}},
  \href{http://dx.doi.org/10.1155/IMRN.2005.1717}{\path{doi}}. \MR{2172339
  (2007c:20012)}

\bibitem{KM-Schubert}
A.~Knutson and E.~Miller, \emph{Gr\"obner geometry of {S}chubert polynomials},
  Ann. of Math. (2) \textbf{161} (2005), no.~3, 1245--1318. \MR{MR2180402
  (2006i:05177)}

\bibitem{KMS}
A.~Knutson, E.~Miller, and M.~Shimozono, \emph{Four positive formulae for type
  {$A$} quiver polynomials}, Invent. Math. \textbf{166} (2006), no.~2,
  229--325, \href{http://dx.doi.org/10.1007/s00222-006-0505-0}{\path{doi}}.
  \MR{2249801}

\bibitem{KMY}
A.~Knutson, E.~Miller, and A.~Yong, \emph{Gr\"obner geometry of vertex
  decompositions and of flagged tableaux}, J. Reine Angew. Math. \textbf{630}
  (2009), 1--31,
  \href{http://arxiv.org/abs/math.CO/0502144}{\path{arXiv:math.CO/0502144}},
  \href{http://dx.doi.org/10.1515/CRELLE.2009.033}{\path{doi}}. \MR{2526784}

\bibitem{artic39}
A.~Knutson and P.~Zinn-Justin, \emph{The {B}rauer loop scheme and orbital
  varieties}, J. Geom. Phys. \textbf{78} (2014), 80--110,
  \href{http://arxiv.org/abs/1001.3335}{\path{arXiv:1001.3335}},
  \href{http://dx.doi.org/10.1016/j.geomphys.2014.01.006}{\path{doi}}.

\bibitem{MO-qg}
D.~Maulik and A.~Okounkov, \emph{Quantum groups and quantum cohomology}, 2012,
  \href{http://arxiv.org/abs/1211.1287}{\path{arXiv:1211.1287}}.

\bibitem{MS-book}
E.~Miller and B.~Sturmfels, \emph{Combinatorial commutative algebra}, Graduate
  Texts in Mathematics, vol. 227, Springer-Verlag, New York, 2005. \MR{2110098
  (2006d:13001)}

\bibitem{Pas-RS}
V.~Pasquier, \emph{Quantum incompressibility and {R}azumov {S}troganov type
  conjectures}, Ann. Henri Poincar\'e \textbf{7} (2006), no.~3, 397--421,
  \href{http://arxiv.org/abs/cond-mat/0506075}{\path{arXiv:cond-mat/0506075}}.
  \MR{MR2226742 (2007d:82012)}

\bibitem{RS-conj}
A.~Razumov and Yu. Stroganov, \emph{Combinatorial nature of the ground-state
  vector of the {$O(1)$} loop model}, Teoret. Mat. Fiz. \textbf{138} (2004),
  no.~3, 395--400,
  \href{http://arxiv.org/abs/math/0104216}{\path{arXiv:math/0104216}},
  \href{http://dx.doi.org/10.1023/B:TAMP.0000018450.36514.d7}{\path{doi}}.
  \MR{MR2077318 (2005f:82022)}

\bibitem{RTV-K}
R.~Rim{\'a}nyi, V.~Tarasov, and A.~Varchenko, \emph{Trigonometric weight
  functions as {K}-theoretic stable envelope maps for the cotangent bundle of a
  flag variety}, 2014,
  \href{http://arxiv.org/abs/1411.0478}{\path{arXiv:1411.0478}}.

\bibitem{artic56}
R.~Rim{\'a}nyi, V.~Tarasov, A.~Varchenko, and P.~Zinn-Justin, \emph{Extended
  {J}oseph polynomials, quantized conformal blocks, and a {$q$}-{S}elberg type
  integral}, J. Geom. Phys. \textbf{62} (2012), no.~11, 2188--2207,
  \href{http://arxiv.org/abs/1110.2187}{\path{arXiv:1110.2187}},
  \href{http://dx.doi.org/10.1016/j.geomphys.2012.06.008}{\path{doi}}.
  \MR{2964653}

\bibitem{KRosu}
I.~Rosu, \emph{Equivariant {$K$}-theory and equivariant cohomology}, Math. Z.
  \textbf{243} (2003), no.~3, 423--448, With an appendix by Allen Knutson and
  Rosu, \href{http://dx.doi.org/10.1007/s00209-002-0447-1}{\path{doi}}.
  \MR{1970011}

\bibitem{Rothbach-PhD}
B.~Rothbach, \emph{Borel orbits of {$X^2 = 0$} in {$\mathfrak{gl}_n$}}, 2009,
  PhD thesis, \url{http://search.proquest.com//docview/304845738}.

\bibitem{CSu}
C.~Su, \emph{Restriction formula for stable basis of {S}pringer resolution},
  2015, \href{http://arxiv.org/abs/1501.04214}{\path{arXiv:1501.04214}}.

\bibitem{VV}
G.~Vezzosi and A.~Vistoli, \emph{Higher algebraic {$K$}-theory of group actions
  with finite stabilizers}, Duke Math. J. \textbf{113} (2002), no.~1, 1--55,
  \href{http://dx.doi.org/10.1215/S0012-7094-02-11311-8}{\path{doi}}.
  \MR{1905391}

\bibitem{Weyman}
J.~Weyman, \emph{Cohomology of vector bundles and syzygies}, Cambridge Tracts
  in Mathematics, vol. 149, Cambridge University Press, Cambridge, 2003,
  \href{http://dx.doi.org/10.1017/CBO9780511546556}{\path{doi}}. \MR{1988690}

\bibitem{Wieland}
B.~Wieland, \emph{A large dihedral symmetry of the set of alternating sign
  matrices}, Electron. J. Combin. \textbf{7} (2000), Research Paper 37, 13 pp,
  \href{http://arxiv.org/abs/math/0006234}{\path{arXiv:math/0006234}}.
  \MR{MR1773294 (2001g:05016)}

\bibitem{WY-SchubGroth}
A.~Woo and A.~Yong, \emph{When is a {S}chubert variety {G}orenstein?}, Adv.
  Math. \textbf{207} (2006), no.~1, 205--220,
  \href{http://arxiv.org/abs/math/0409490}{\path{arXiv:math/0409490}},
  \href{http://dx.doi.org/10.1016/j.aim.2005.11.010}{\path{doi}}. \MR{2264071}

\bibitem{artic38}
P.~Zinn-Justin, \emph{Proof of the {R}azumov--{S}troganov conjecture for some
  infinite families of link patterns}, Electron. J. Combin. \textbf{13} (2006),
  no.~1, Research Paper 110, 15 pp,
  \href{http://arxiv.org/abs/math/0607183}{\path{arXiv:math/0607183}}.
  \MR{MR2274325 (2007k:82034)}

\bibitem{hdr}
\bysame, \emph{Six-vertex, loop and tiling models: integrability and
  combinatorics}, Lambert Academic Publishing, 2009, Habilitation thesis,
  \url{http://www.lpthe.jussieu.fr/~pzinn/publi/hdr.pdf}.

\bibitem{artic64}
\bysame, \emph{Quiver varieties and the quantum {K}nizhnik--{Z}amolodchikov
  equation}, Theoretical and Mathematical Physics \textbf{185} (2015), no.~3,
  1741--1758, \href{http://arxiv.org/abs/1502.01093}{\path{arXiv:1502.01093}},
  \href{http://dx.doi.org/10.1007/s11232-015-0376-x}{\path{doi}}.

\bibitem{Zuber-conj}
J.-B. Zuber, \emph{On the counting of fully packed loop configurations: some
  new conjectures}, Electron. J. Combin. \textbf{11} (2004), no.~1, Research
  Paper 13, 15 pp,
  \href{http://arxiv.org/abs/math-ph/0309057}{\path{arXiv:math-ph/0309057}}.
  \MR{MR2035307 (2004j:05017)}

\end{thebibliography}
\end{document}

Allen Knutson and Ioanid Rosu, Appendix to Equivariant K-theory and
Equivariant Cohomology, Math. Z. 243 (1999), no. 3, 423–448.

Gabriele Vezzosi and Angelo Vistoli, Higher algebraic K-theory for actions
of diagonalizable groups, Invent. Math. 153 (2003), no. 1, 1–44.